\documentclass[reqno,centertags,12pt]{amsart}

\usepackage{amsmath}
\usepackage{amscd}
\usepackage{stackrel}
\usepackage{amssymb}
\usepackage{amsthm}
\usepackage{bbm}
\usepackage{latexsym}
\usepackage{mathrsfs}
\usepackage{verbatim}
\usepackage{tikz-cd}
\usepackage{mathtools}
\usepackage[hidelinks]{hyperref}

\usepackage[shortlabels]{enumitem}

\usepackage{xcolor, cite}

\usepackage{tikz}
\pgfdeclarelayer{nodelayer}
\pgfdeclarelayer{edgelayer}
\pgfsetlayers{edgelayer,nodelayer,main}
\tikzstyle{arrow}=[draw=black,arrows=-latex]

\textheight 21cm \topmargin 0cm \leftmargin 0cm \marginparwidth 0mm
\textwidth 16.6cm \hsize \textwidth \advance \hsize by
-\marginparwidth \oddsidemargin -4mm \evensidemargin \oddsidemargin


\newtheorem{theorem}{Theorem}[section]

\newtheorem{proposition}[theorem]{Proposition}
\newtheorem{lemma}[theorem]{Lemma}
\newtheorem{corollary}[theorem]{Corollary}
\theoremstyle{definition}
\newtheorem{definition}[theorem]{Definition}



\newcounter{smalllist}


\DeclareMathOperator*{\sgn}{sgn}

\allowdisplaybreaks
\numberwithin{equation}{section}

\newcommand{\abs}[1]{\left\lvert#1\right\rvert}

\newcommand{\norm}[1]{\left\|#1\right\|}

\newcommand{\set}[1]{\left\{ #1 \right\}}



\newcommand{\lb}{\label}

\newcommand{\ess}{\text{\rm{ess}}}

\newcommand{\beq}{\begin{equation}}
\newcommand{\eeq}{\end{equation}}

\newcommand{\bal}{\begin{align}}
\newcommand{\eal}{\end{align}}
\newcommand{\bals}{\begin{align*}}
\newcommand{\eals}{\end{align*}}


\newcommand{\bbN}{{\mathbb{N}}}
\newcommand{\bbR}{{\mathbb{R}}}

\newcommand{\bbZ}{{\mathbb{Z}}}

\newcommand{\bbQ}{{\mathbb{Q}}}
\newcommand{\bbT}{{\mathbb{T}}}

\newcommand{\calL}{{\mathcal L}}

\newcommand{\eps}{\varepsilon}

\newcommand{\tht}{\theta}



\begin{document}
\title[Low Regularity Solutions to the g-SQG Equation with Regular Level Sets]
{Well-Posedness for Low Regularity Solutions to the g-SQG Equation with Regular Level Sets}

\author{Junekey Jeon and Andrej Zlato\v{s}}

\address{\noindent Department of Mathematics \\ University of
California San Diego \\ La Jolla, CA 92093 \newline Email: \tt
zlatos@ucsd.edu,
j6jeon@ucsd.edu}

\begin{abstract}
We show that the generalized SQG equation on the plane  is locally well-posed in spaces of low regularity solutions (essentially H\" older continuous with H\" older  exponents depending on the equation parameter $\alpha\in(0,\frac 12)$) that have $H^2$ level sets (i.e., with $L^2$ curvatures). Moreover, for $\alpha\le\frac 16$ and initial data satisfying some additional hypotheses we show that the corresponding solutions can stop existing only when their level sets lose $H^2$-regularity, and hence not just due to level set collisions or ``pile ups''.
\end{abstract}

\maketitle

\section{Introduction}

The {\it generalized surface quasi-geostrophic equation (g-SQG)} is the active scalar PDE
\begin{equation}\label{1.1}
    \partial_{t}\theta + u(\tht)\cdot\nabla\theta = 0
\end{equation}
on $I\times \bbR^2$ for some open time interval $I$, where the transporting velocity is given by
\begin{equation}\label{1.2}
    u(\tht) \coloneqq -\nabla^{\perp}(-\Delta)^{-1+\alpha}\theta
\end{equation}
with $\alpha\in\left(0,\frac{1}{2}\right)$, and we denote
$\nabla^{\perp}\coloneqq(-\partial_{x_{2}},\partial_{x_{1}})$.  When $\alpha=0$ and $\alpha=\frac 12$, this becomes the {\it Euler} and {\it surface quasi-geostrophic equation (SQG)}, respectively, with $\tht=\nabla^\perp\cdot u(\theta)$  being the vorticity of $u(\theta)$ in the former case.
Denoting $(x_{1},x_{2})^{\perp}\coloneqq(-x_{2},x_{1})$, we can write \eqref{1.2} as 
\beq\lb{11.7}
    u(\theta^t;x) \coloneqq   c_{\alpha} \int_{\bbR^{2}}
    \frac{(x-y)^\perp}{\abs{x-y}^{2+2\alpha}}\, \theta^t(y)\,dy =  \int_{\bbR^{2}}    \nabla^{\perp}K(x - y)\,\theta^t(y)\,dy
\eeq
with $\theta^t:=\theta(t,\cdot)\in L^{1}(\bbR^{2})\cap L^{\infty}(\bbR^{2})$ at any time $t\in I$, where $c_{\alpha}>0$ is some constant
and
\[
    K(x)\coloneqq - \frac{c_{\alpha}}{2\alpha\abs{x}^{2\alpha}} .
\]

The g-SQG equation first appeared in physics and atmospheric science literature (see, e.g.,  \cite{PHS, Smith}), with the first mathematical studies of it, including local regularity in sufficiently smooth spaces of functions, being undertaken in \cite{CCW,CIW}.
Note that since \eqref{1.1} is a transport PDE, it can in principle also be solved in low regularity spaces in both weak \cite{Nguyen} and Lagrangian (i.e., transport) sense.  The latter notion of solutions typically also guarantees the former, and we will employ it here, too (see Definition \ref{D.11.1} below).

While $u(\theta^t)$ is clearly a $(1-2\alpha)$-H\"{o}lder continuous vector field when $\theta^t\in L^{1}(\bbR^{2})\cap L^{\infty}(\bbR^{2})$, this is not sufficient to obtain well-posedness for the PDE in this space.
However, if also $\theta^t\in C^{\beta}(\bbR^{2})$ for some $\beta>2\alpha$, then $u(\theta^t)$ is Lipschitz and this easily yields local well-posedness in $L^{1}(\bbR^{2})\cap  C^{\beta}(\bbR^{2})$ for any  $\beta>2\alpha$  \cite{CJK}.  We note that recently local well-posedness \cite{JKY, ZlaSQGsing} and finite time singularity formation \cite{ZlaSQGsing} were proved for \eqref{1.1} on the half-plane in appropriate spaces of continuous functions with regularity deteriorating at the boundary.

On the other hand, \eqref{1.1}--\eqref{1.2} is also locally well-posed in the space of (spatially discontinuous) {\it patch solutions} of the form
\[
\tht^t =\sum_{n=1}^N \mu_n \mathbbm{1}_{\Theta_n^t}
\]
at any time $t$, with some constants $\mu_n\in\bbR\setminus\{0\}$ and bounded {\it patches} $\Theta_n^t\subseteq\bbR^2$ that are transported by the velocity $u(\theta)$, provided all the initial patch boundaries $\partial \Theta_n^0$ are disjoint and sufficiently smooth simple closed curves \cite{Gancedo,GanPat}.  The reason for this is that the dynamic of these solutions is fully determined by $u(\theta)$ restricted to the patch boundary curves, and as long as these are smooth enough at all times $t$, the same is true for the velocity restricted to them (because near each patch boundary, $\tht^t$ can be seen as constant in the tangential direction).
In addition, blow-up criteria \cite{JeoZla, KisLuo,GanStr} and ill-posedness results \cite{KisLuo2} were recently obtained for this {\it g-SQG patch problem}, along with local well-posedness and finite time singularity results on the half-plane \cite{KisYaoZla,         KRYZ,GanPat} and when some of the  patches are allowed to touch each other \cite{JeoZla2} (see below for more details on this).

Since  patch boundaries are  also the boundaries of the level sets of patch solutions, the above explanation of existence of local regularity results for the g-SQG patch problem suggests a natural question:  {\it Is there a local well-posedness theory for \eqref{1.1}--\eqref{1.2} in spaces of low regularity continuous solutions with sufficiently regular level sets?}  

In our main result, Theorem \ref{T1.4} below, {we answer this in the affirmative} in the space of functions whose level sets (or their boundaries) are  simple closed $H^2$  curves and they belong to $L^{1}(\bbR^{2})\cap C_\rho(\bbR^2)$ for some increasing continuous  $\rho:[0,\infty)\to[0,\infty)$ satisfying  $\rho(0)=0$ and 
\beq\lb{11.5}
\int_{0}^{1}\frac{\rho(s)}{s^{1+2\alpha}}\,ds < \infty.
\eeq
  Here $f\in C_\rho(\bbR^2)$ whenever $f$ has modulus of continuity $a\rho$ for some $a>0$, where the latter means that $|f(x)-f(y)| \le a\rho(|x-y|)$ for all $x,y\in\bbR^2$. For instance, we can take $\rho(s):=s^{2\alpha} \max\{-\ln s,1\}^{-p}$ with $p>1$, and then $\bigcup_{\beta>2\alpha}C^\beta(\bbR^2)\subseteq C_\rho(\bbR^2)$.

Note that since the solutions in Theorem \ref{T1.4} are essentially no better than functions from $L^{1}(\bbR^{2})\cap C^{2\alpha}(\bbR^2)$  at each fixed  time $t$, the corresponding velocities $u(\theta^t)$ are essentially no better than Lipschitz.  Nevertheless, the result implies that the non-linear dynamic still preserves $H^2$-regularity of level sets, at least for a short time, which means that the velocities indeed have a higher degree of regularity when restricted to any of the (infinitely many) individual level sets of the solution.

Moreover, when $\alpha\le \frac 16$ and the initial datum satisfies appropriate hypotheses, we also show that the corresponding solution can be uniquely continued as long as  $H^2$-regularity of the level sets is preserved.  That is, any finite time singularity development must be accompanied by a loss of $H^2$-regularity of solution level sets, and in particular cannot be caused by a collision or ``pile up'' of level sets while these remain regular.  We note that although the question of approach rates and possible collisions of distinct level sets of solutions to active scalar equations has been studied in the past, these works typically considered geometrically somewhat rigid and hence specialized scenarios.  For instance, it was shown in \cite{CorFef} that when at each fixed time  two level sets of a solution to the SQG equation contain graphs of some sufficiently regular functions defined on some time-independent interval in $\bbR\times\{0\}$ and their distance is comparable on that whole interval, then these level sets cannot approach each other faster than double-exponentially.  In contrast, our result allows for arbitrary geometries and relative  positions of $H^2$ level sets for even low regularity solutions to \eqref{1.1}, and excludes not only finite time collisions but also pile ups that could cause solutions to exit $C_\rho(\bbR^2)$ (as long as all level sets remain regular).

The first step in the proof of Theorem \ref{T1.4} is showing that \eqref{1.1}--\eqref{1.2} is locally well-posed in $L^{1}(\bbR^{2})\cap C_\rho(\bbR^2)$ provided the modulus $\rho$ is as above, regardless of the geometry of the level sets of solutions.  This follows from Theorem \ref{T1.2} below, which is then also a marginal improvement of the abovementioned result from \cite{CJK}.  

However, based on our discussion above, one might hope to have local well-posedness for solutions with significantly lower degree of regularity when only their level sets are sufficiently regular.  Nevertheless, this question remains open, the reason for which can be seen from the proofs in our recent work \cite{JeoZla2}.  In it we considered patch solutions for \eqref{1.1}--\eqref{1.2} with possibly touching $H^2$ patches and obtained local well-posedness as long as only exterior touches of patches $\Theta_n^t$ and $\Theta_{n'}^t$ with $\mu_n\mu_{n'}<0$ and interior touches of patches with $\mu_n\mu_{n'}>0$ are allowed.  In particular, one can consider arbitrary initial  configurations of $H^2$ patches satisfying $\Theta_1^0\subseteq \Theta_2^0\subseteq \dots \subseteq \Theta_N^0$  with all $\mu_n>0$, which can be used to approximate  (as $N\to\infty$) general, even discontinuous, functions whose  level sets are simple closed $H^2$ curves.  The problem is, however, that in \cite{JeoZla2} we were only able to control the rate of change of $\sum_{n=1}^N \mu_n\|\partial\Theta^t_n\|_{H^2}^2$ rather than of the individual $H^2$ norms in this sum, which only yields an $N$-dependent  bound on each $H^2$ norm.  This is why we have to stay within $C_\rho(\bbR^2)$  for solutions taking infinitely many values, and in fact the velocities $u(\theta^t)$ of all solutions considered here will be Lipschitz at any fixed time $t$ from their intervals of existence
(see Lemma \ref{L2.1} below).

Let us now turn to the level-set view of the dynamic of \eqref{1.1}.
To work directly with level sets of some $\theta\in L^{1}(\bbR^{2})\cap L^{\infty}(\bbR^{2})$, one can use its {\it layer cake decomposition}
\begin{equation}\label{11.4}
    \theta(x) = \int_{0}^{\sup \tht} \mathbbm{1}_{\{\tht> \lambda\}}(x)\,d\lambda - \int_{\inf\tht}^{0} \mathbbm{1}_{\{\tht< \lambda\}}(x)\,d\lambda.
\end{equation}
We will in fact use a generalization of this concept, where the relevant $\lambda$-dependent domains need not be nested and hence need not be sub- and/or super-level sets of $\theta$.  

\begin{definition} \lb{D1.0}
Let $\mathcal{L}$ be a measurable space (with some $\sigma$-algebra), let
$\mu$ be a $\sigma$-finite signed measure on $\mathcal{L}$, and let
$\Theta\subseteq \bbR^{2}\times\mathcal{L}$ be
a set in the product $\sigma$-algebra
(i.e., the $\sigma$-algebra generated by measurable rectangles) of finite measure
with respect to the product of the Lebesgue measure on $\bbR^2$ and the total variation measure $|\mu|$ of $\mu$ on $\mathcal{L}$.  For any $\lambda\in\mathcal{L}$, let  $\Theta^{\lambda}:=\{x\in\bbR^2\,:\, (x,\lambda)\in\Theta \}$ be the $\lambda$-section of $\Theta$.  If for each $x\in\bbR^2$ we have
\begin{equation}\label{1.3}
    \theta(x) = \int_{\mathcal{L}}\mathbbm{1}_{\Theta^{\lambda}}(x)\,d\mu(\lambda),
\end{equation}
then  $(\Theta,\mu)$ is
a \emph{generalized layer cake representation} of $\theta$. 
\end{definition}

{\it Remark.} 1. Since the measurable space $\mathcal{L}$ is implicitly given by $\Theta$ and $\mu$, we will suppress it in  the notation $(\Theta,\mu)$ but will always denote it by $\mathcal{L}$.
\smallskip

2. We require $\Theta$ to be in the product $\sigma$-algebra
(instead of in its completion with respect to the product measure)
in order to ensure measurability of \eqref{1.100} below.
This is general enough to cover the case of layer cake decompositions,
which can be seen by diadically dividing the second component of
$\Theta$ from \eqref{1.101} below.
\smallskip

Hence  \eqref{11.4}  shows that the layer cake decomposition of any $\theta\in L^{1}(\bbR^{2})\cap L^{\infty}(\bbR^{2})$ is also its generalized layer cake representation, with $\mathcal{L}=(\inf\tht,\sup\tht)$, $d\mu(\lambda)=\sgn(\lambda)d\lambda$, and 
\begin{equation}\label{1.101}
    \Theta= \set{(x,\lambda)\in
    \bbR^{2}\times\left[0,\sup\theta\right)
    \colon
    \theta(x) > \lambda}
    \cup
    \set{(x,\lambda)\in
    \bbR^{2}\times\left(\inf\theta,0\right)
    \colon
    \theta(x) < \lambda}.
\end{equation}
Besides \eqref{1.3} being more general than \eqref{11.4}, another reason for our usage of it is  that it can be a more convenient way to represent functions whose level sets have multiple connected components, and in particular are multiple disjoint $H^{2}$ curves.

%


Generalized layer cake representations obviously commute with measure-preserving homeomorphisms $\Phi\in C(\bbR^2;\bbR^2)$ in the sense that if $(\Theta,\mu)$ is a
generalized layer cake representation of $\theta$, then $(\Phi_*\Theta,\mu)$ with $\Phi_{*}\Theta\coloneqq\left\{(\Phi(x),\lambda)\colon (x,\lambda)\in\Theta\right\}$  (and so $(\Phi_{*}\Theta)^\lambda = \Phi(\Theta^{\lambda})$) is a
generalized layer cake representation of $\Phi_*\theta:=\theta\circ\Phi^{-1}$.
This, and the fact that our $u(\theta^t)$ will all be Lipschitz, makes the following notion of solutions particularly suited to our approach.


\begin{definition} \lb{D.11.1}
    A \emph{Lagrangian solution}
    to \eqref{1.1}--\eqref{1.2} on an open time interval $I\owns 0$ with
     initial datum $\theta^{0}\in L^{1}(\bbR^{2})\cap L^{\infty}(\bbR^{2})$ is any
    $\theta\colon I\to L^{1}(\bbR^{2})\cap L^{\infty}(\bbR^{2})$ given by
    $\theta^{t}\coloneqq \Phi^{t}_*\theta$ for each $t\in I$, where
    $\Phi\in C\left(I;C(\bbR^{2};\bbR^{2})\right)$ solves the initial value problem
        \begin{equation}\label{1.6}
           \partial_{t}\Phi^{t} = u(\theta^{t}) \circ \Phi^{t}
     \qquad\text{and}\qquad       \Phi^{0} = {\rm Id}
    \end{equation}
    on $I$, with $\Phi^t$ being a measure-preserving homeomorphism for each $t\in I$.
\end{definition}

{\it Remarks.}  1. Here $C(A;B)$ will always refer to the space of all continuous $f:A\to B$, not just the bounded ones.  Although we may then  have $\|f\|_{L^\infty}=\infty$, it still makes sense to consider continuity of $\Phi$ in time with respect to the metric $d(f,g) \coloneqq \norm{f - g}_{L^{\infty}}$ on $C(\bbR^{2};\bbR^{2})$.\smallskip

2.  We will call the above $\Phi$ the \emph{flow map} of $\theta$.  Since $\nabla^\perp$ in \eqref{1.2} implies that $\nabla\cdot u(\theta^{t})\equiv 0$, transport by $u$ preserves the Lebesgue measure and hence so must each $\Phi^t$.
    \smallskip
    
 3. It is easy to show that any Lagrangian solution $\theta$ to \eqref{1.1}--\eqref{1.2} is also
a weak solution in the sense that for all $\varphi\in C_{c}^{1}(\bbR^{2})$ and $t\in I$ we  have
\[
    \frac{d}{dt}\int_{\bbR^{2}}\varphi(x)\,\theta^{t}(x)\,dx
    = \int_{\bbR^{2}}\left[u(\theta^{t};x)\cdot\nabla\varphi(x)\right]
    \theta^{t}(x)\,dx.
\]
\smallskip

To obtain a general local well-posedness result for \eqref{1.1}--\eqref{1.2},
we will also need to add a hypothesis that guarantees $u(\tht^0)$ to be Lipschitz.  In terms of \eqref{1.3}, this will be that
\begin{equation}\label{1.4}
    L_{\mu}(\Theta)\coloneqq \sup_{x\in\bbR^{2}}\int_{\mathcal{L}}
    \frac{d|\mu|(\lambda)}{d(x,\partial\Theta^{\lambda})^{2\alpha}} < \infty
\end{equation}
holds for some generalized layer cake representation $(\Theta,\mu)$ of $\theta^0$ (with $\frac{1}{0}: = \infty$).
Note that a standard measure theory argument shows joint measurability of 
\begin{equation}\label{1.100}
    d(x,\partial\Theta^{\lambda}) = \max\{
        d(x,\Theta^{\lambda}),
        d(x,\bbR^{2}\setminus\Theta^{\lambda})
    \}
\end{equation}
in $(x,\lambda)$, so $L_{\mu}(\Theta)$ is always well-defined and Lemma \ref{L2.1} below shows that $u(\tht^0)$ is then Lipschitz.
We  note  that   $L_{\mu}(\Phi^t_*\Theta)$ will also be used to control the growth of
$H^{2}$ norms of the boundary curves $\partial((\Phi^t_* \Theta)^{\lambda})= \Phi^t(\partial\Theta^\lambda)$ in the proof of Theorem \ref{T1.4} (see Lemma~\ref{L3.7} below), and that  for any generalized layer cake representation $(\Theta,\mu)$ and any measure-preserving homeomorphism $\Phi\in C(\bbR^2;\bbR^2)$ we have
\begin{equation}\label{1.7}
        L_{\mu}(\Phi_{*}\Theta)
        = \sup_{x\in\bbR^{2}}\int_{\mathcal{L}}\frac{d|\mu|(\lambda)}
        {d(\Phi(x),\Phi(\partial\Theta^{\lambda}))^{2\alpha}}
        \leq \norm{\Phi^{-1}}_{\dot{C}^{0,1}}^{2\alpha} L_{\mu}(\Theta).
    \end{equation}
    
Moreover, when $\theta\in L^{1}(\bbR^{2})\cap L^{\infty}(\bbR^{2})$ has
a modulus of continuity $\rho$ that satisfies \eqref{11.5}
(which includes $\rho(s):=s^{2\alpha} \max\{-\ln s,1\}^{-p}$ with any $p>1$,
when $\bigcup_{\beta>2\alpha}C^\beta(\bbR^2)\subseteq C_\rho(\bbR^2)$),
then \eqref{1.4} also holds when $(\Theta,\mu)$
is given by the layer cake decomposition of $\theta$ from \eqref{11.4}.
Indeed, with $\kappa(\lambda)\coloneqq\inf\set{s\geq 0\colon \rho(s)\geq \lambda}$
we have $\kappa^{-1}\left((a,b]\right) = (\rho(a),\rho(b)]$ for any $0\leq a<b$, so a
change of variables and subsequent integration by parts show that
\begin{equation*}
    L_{\mu}(\Theta) \leq 2\int_{0}^{\sup\theta^{0} - \inf\theta^{0}}
    \frac{d\lambda}{\kappa(\lambda)^{2\alpha}}
    = 2\int_{0}^{\kappa\left(\sup\theta^{0} - \inf\theta^{0}\right)}
    \frac{d\kappa_{*}m(s)}{s^{2\alpha}}
    = 2\int_{0}^{\kappa\left(\sup\theta^{0} - \inf\theta^{0}\right)}
    \frac{d\rho(s)}{s^{2\alpha}} < \infty,
\end{equation*}
where $m$ is the 1-dimensional Lebesgue measure and
the last integral is a Lebesgue-Stieltjes integral.
On the other hand, for any $x,y\in\bbR^2$ we have 
    \begin{align*}
        \abs{\theta(x) - \theta(y)}
        \leq \int_{\mathcal{L}}
        \abs{\mathbbm{1}_{\Theta^{\lambda}}(x) - \mathbbm{1}_{\Theta^{\lambda}}(y)}
        \,d|\mu|(\lambda)
        \leq \int_{\mathcal{L}}\frac{\abs{x - y}^{2\alpha}}
        {d(x,\partial\Theta^{\lambda})^{2\alpha}}\,d|\mu|(\lambda)
        \leq L_{\mu}(\Theta)\abs{x - y}^{2\alpha}
    \end{align*}
    because the first integrand is nonzero
    only when $d(x,\partial\Theta^{\lambda})\le \abs{x - y} $, and so $\norm{\theta}_{\dot{C}^{0,2\alpha}} \leq L_{\mu}(\Theta)$ holds for any generalized layer cake representation $(\Theta,\mu)$ of $\theta$.  This all shows that \eqref{1.4} always implies $\theta\in C^{2\alpha}(\bbR^2)$ (with the latter not guaranteeing Lipschitzness of $u(\theta)$) but it is also very close to being equivalent to $\theta\in C^{2\alpha}(\bbR^2)$.  In fact, for any $a>0$,  the layer cake decomposition of  $a\sum_{n\ge 1}  {3^{1-2\alpha n}}  [1-3^n|x-(2^{1-n},0)|]_+$ satisfies \eqref{1.4} but all its moduli of continuity $\rho$ have $\rho(s)\ge as^{2\alpha}$ for all $s\in[0,1]$.

We are now ready to state our general well-posedness result.

\begin{theorem}\label{T1.2}
    Assume that $\theta^{0}\in L^{1}(\bbR^{2})\cap L^{\infty}(\bbR^{2})$ admits
    a generalized layer cake representation $(\Theta,\mu)$ with
    $L_{\mu}(\Theta)<\infty$. Then there is an open interval $I\ni 0$ and a Lagrangian solution
    $\theta$ to \eqref{1.1}--\eqref{1.2}  on $I$    with $\alpha\in(0,\frac 12)$ and initial data $\theta^{0}$, with
    the associated flow map $\Phi\in C\left(I;C(\bbR^{2};\bbR^{2})\right)$ such that
    $\sup_{t\in J}L_{\mu}(\Phi_{*}^{t}\Theta) < \infty$ for any compact interval $J\subseteq I$.  Let $I$ be the maximal such interval.
    Then the solution $\theta$ is unique and independent of the choice of $(\Theta,\mu)$, for any compact interval $J\subseteq I$ we have  $\sup_{t\in J}\max\set{\norm{\Phi^{t}}_{\dot{C}^{0,1}},    \norm{(\Phi^{t})^{-1}}_{\dot{C}^{0,1}}}<\infty$, and for any endpoint $T$  of $I$ we have either $|T|=\infty$  or
    $\lim_{t\to T}L_{\mu}(\Phi_{*}^{t}\Theta) = \infty$.
\end{theorem}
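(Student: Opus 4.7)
The plan is to reduce the theorem to a single a priori Lipschitz estimate on the flow map, obtain existence by mollifying the singular kernel, and derive uniqueness, the blow-up criterion, and independence from $(\Theta,\mu)$ from the same estimate combined with a Grönwall argument.

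The key a priori bound proceeds as follows. Differentiating \eqref{1.6} in $x$ and applying Grönwall gives $\norm{\Phi^t}_{\dot{C}^{0,1}}\le\exp\int_0^t\norm{u(\theta^s)}_{\dot{C}^{0,1}}\,ds$, and the same bound for $\norm{(\Phi^t)^{-1}}_{\dot{C}^{0,1}}$. Lemma \ref{L2.1} bounds $\norm{u(\theta^t)}_{\dot{C}^{0,1}}$ by $C_\alpha(L_\mu(\Phi^t_*\Theta)+\norm{\theta^0}_{L^1})$, while \eqref{1.7} bounds $L_\mu(\Phi^t_*\Theta)$ by $\norm{(\Phi^t)^{-1}}_{\dot{C}^{0,1}}^{2\alpha}L_\mu(\Theta)$. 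Setting $G(t):=\max\{\norm{\Phi^t}_{\dot{C}^{0,1}},\norm{(\Phi^t)^{-1}}_{\dot{C}^{0,1}}\}$ yields an autonomous ODI
\[
G'(t)\le C_\alpha\bigl(L_\mu(\Theta)G(t)^{1+2\alpha}+\norm{\theta^0}_{L^1}G(t)\bigr),\qquad G(0)=1,
\]
which, since $2\alpha<1$, has a solution on some $[-T_0,T_0]$ of length depending only on $\alpha$, $L_\mu(\Theta)$, and $\norm{\theta^0}_{L^1}$. This provides uniform control on $G$, and hence on $L_\mu(\Phi^t_*\Theta)$ via \eqref{1.7}, throughout $[-T_0,T_0]$.

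For existence I would regularize the kernel: pick smooth $K_\varepsilon$ with $K_\varepsilon\to K$ pointwise away from the origin and $|\nabla^\perp K_\varepsilon|\lesssim|\nabla^\perp K|$ uniformly, define $u_\varepsilon$ accordingly, and solve the corresponding mollified flow equation globally in time by Picard--Lindelöf (for each $\varepsilon$, $u_\varepsilon$ is Lipschitz in $\theta$ on bounded sets). The a priori estimate applies uniformly in $\varepsilon$ because Lemma \ref{L2.1} uses only integrated properties of $\nabla^\perp K$ inherited by $\nabla^\perp K_\varepsilon$. Arzelà--Ascoli on $C([-T_0,T_0];C(\bbR^2;\bbR^2))$---equicontinuity in $x$ from the Lipschitz bound, in $t$ from the uniform $L^\infty$ bound on $u_\varepsilon(\theta^{\varepsilon,t})$---produces a convergent subsequence $\Phi^\varepsilon\to\Phi$, and one verifies via dominated convergence (using $\alpha<\tfrac12$ to integrate the kernel singularity) that $\theta^t:=\Phi^t_*\theta^0$ is a Lagrangian solution. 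Standard continuation gives the maximal interval $I$, and the blow-up criterion at a finite endpoint $T$ is immediate: if $L_\mu(\Phi^t_*\Theta)$ stayed bounded as $t\to T$, the ODI would keep $G$ bounded, $\Phi^t$ would extend continuously to $T$, and the construction could be restarted at $T$ with initial datum $\Phi^T_*\theta^0$.

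For uniqueness, let $\Phi_1,\Phi_2$ be two flows with $\sup_J L_\mu((\Phi_i^t)_*\Theta)<\infty$ on each compact $J\subseteq I$. Then $\delta(t):=\norm{\Phi_1^t-\Phi_2^t}_{L^\infty}$ satisfies $\delta'(t)\le\norm{u(\theta_1^t)}_{\dot{C}^{0,1}}\delta(t)+\norm{u(\theta_1^t)-u(\theta_2^t)}_{L^\infty}$. The first term is controlled by the a priori bound on $G$; for the second, one expands $\theta_1^t-\theta_2^t$ via the layer cake, observes that $\mathbbm 1_{\Phi_1^t(\Theta^\lambda)}-\mathbbm 1_{\Phi_2^t(\Theta^\lambda)}$ is supported on a tube of width $\lesssim G(t)^2\delta(t)$ around $\Phi_1^t(\partial\Theta^\lambda)$, and integrates $\nabla^\perp K$ against this tube using the pointwise bound built into $L_\mu((\Phi_1^t)_*\Theta)$. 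A nonlinear Grönwall argument then forces $\delta\equiv 0$. Independence from $(\Theta,\mu)$ follows because two layer cake representations of $\theta^0$ produce flows satisfying the same fixed-point equation $\partial_t\Phi^t=u(\Phi^t_*\theta^0)\circ\Phi^t$, so uniqueness and the blow-up criterion together force their maximal intervals to coincide. The main obstacle is precisely this uniqueness estimate on $\norm{u(\theta_1^t)-u(\theta_2^t)}_{L^\infty}$: because $u$ is only barely Lipschitz relative to the available $L_\mu$ control, the naive Hölder bound on $\theta_1^t-\theta_2^t$ produces an Osgood-insufficient modulus $\omega(\delta)\sim\delta^{2\alpha}$, and one must instead balance near- and far-field contributions of the singular kernel using the tube geometry to extract an extra logarithmic factor.
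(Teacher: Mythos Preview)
Your overall architecture---mollify the kernel, derive an $\eps$-uniform a priori bound, pass to the limit, and close with a stability/uniqueness estimate---matches the paper's. Two inessential differences: the paper controls $L_\mu(\Phi_{\eps*}^t\Theta)$ directly via the differential inequality $\partial_t^+ L_\mu \le C_\alpha L_\mu^2$ (Proposition~\ref{P2.6} and Corollary~\ref{C2.7}) rather than routing through $G$ and \eqref{1.7}; and it proves the mollified flows are \emph{Cauchy} in $C([-T_0,T_0];BC(\bbR^2;\bbR^2))$ rather than invoking Arzel\`a--Ascoli. Both of your variants are workable, though note that ``differentiating \eqref{1.6} in $x$'' is only formal for the limiting $\Phi^t$, so the a priori bound should be run on the smooth $\Phi_\eps$ first.

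The genuine gap is your stability estimate for $\|u(\theta_1^t)-u(\theta_2^t)\|_{L^\infty}$. You correctly flag this as the crux, but your proposed tube argument cannot be completed under the hypotheses of Theorem~\ref{T1.2}: the quantity $L_\mu$ controls only \emph{distances} to $\partial\Theta^\lambda$, not length or area, and no regularity of $\partial\Theta^\lambda$ is assumed, so integrating $|\nabla^\perp K|$ over the symmetric difference $\Phi_1^t(\Theta^\lambda)\,\triangle\,\Phi_2^t(\Theta^\lambda)$ is uncontrolled---even the log-Lipschitz bound you hope for is out of reach this way. The paper sidesteps the tube entirely (Lemma~\ref{L2.2}): after splitting at scale $2\delta=2\|\Phi_1-\Phi_2\|_{L^\infty}$, in the far region it changes variables $y\mapsto\Phi_i(y)$ so that the difference $\theta_1-\theta_2$ becomes a difference of \emph{kernels} $\nabla^\perp K(x-\Phi_1(y))-\nabla^\perp K(x-\Phi_2(y))$ applied to the \emph{same} function $\theta^0(y)-\theta_1(x)$. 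The mean value theorem then extracts a linear factor $\delta$ from the kernel, and the remaining integral is bounded via the layer cake exactly as in Lemma~\ref{L2.1}. This yields the clean linear estimate
\[
\|u(\Phi_{1*}\theta)-u(\Phi_{2*}\theta)\|_{L^\infty}\le C_\alpha\bigl(L_{\mu_1}(\Phi_{1*}\Theta_1)+L_{\mu_2}(\Phi_{2*}\Theta_2)\bigr)\|\Phi_1-\Phi_2\|_{L^\infty},
\]
so ordinary Gr\"onwall suffices for uniqueness---no Osgood argument is needed. The same lemma also drives the Cauchy-ness of the $\eps$-approximations in Proposition~\ref{P2.8}, so it is doing double duty in the paper's proof.
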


{\it Remark.}  As mentioned above,
the hypothesis is satisfied whenever $\theta^{0}\in L^{1}(\bbR^{2})\cap L^{\infty}(\bbR^{2})$ has modulus of continuity $\rho$ satisfying \eqref{11.5}.
\smallskip

We next turn to our main result, which in particular shows that if, in the setting of Theorem~\ref{T1.2}, each $\partial\Theta^{\lambda}$ is an $H^{2}$ curve
and certain additional assumptions are satisfied, then  the solution from Theorem~\ref{T1.2} retains these properties on some open time interval $I'\ni 0$.
To state it precisely, let $\operatorname{CC}(\bbR^{2})$ be the space of equivalence classes of planar closed curves from $C(\bbT;\bbR^2)$ with respect to the equivalence relation given by the {\it Fr\' echet pseudometric}  
\[
    d_{\mathrm{F}} (\tilde{\gamma}_{1},\tilde{\gamma}_{2})\coloneqq
    \inf_{\phi}    \norm{\tilde{\gamma}_{1} - \tilde{\gamma}_{2}\circ\phi}_{L^{\infty}(\bbT)},
\]
     where the infimum is taken over all orientation-preserving homeomorphisms
$\phi \colon\bbT\to\bbT$ (and $\bbT$ is $[0,1]$ with 0 and 1 identified).  Any $\tilde{\gamma}\in C(\bbT;\bbR^2)$ belonging to some equivalence class $\gamma\in \operatorname{CC}(\bbR^{2})$ (which we also call a {\it closed curve}) will be called a {\it representative} of $\gamma$, and we denote $\operatorname{im}(\gamma):=\operatorname{im}(\tilde\gamma)$ and let the \emph{length} $\ell(\gamma)$ of $\gamma$ be the total variation of $\tilde\gamma$.
 If $\ell(\gamma)<\infty$, then $\gamma$ is rectifiable and its {\it arclength parametrization} is any $\tilde{\gamma}\in C(\ell (\gamma) \bbT;\bbR^{2})$ such that $|\partial_s \tilde\gamma(s)| =1$ for almost all $s\in \ell (\gamma) \bbT$ and $\tilde\gamma(\ell (\gamma) \,\cdot)$ is a representative of $\gamma$.
  We then also denote by $\norm{{\gamma}}_{\dot{H}^{2}}:= \norm{{\tilde \gamma}}_{\dot{H}^{2}(\ell(\gamma)\bbT)}$ the  $L^2$ norm of the curvature of $\gamma$ (if it is finite, then $\gamma$ is an {\it $H^2$ curve}).
           Finally, we let $\Delta(\gamma_1,\gamma_2):=d(\operatorname{im}(\gamma_1), \operatorname{im}(\gamma_2))$ for any $\gamma_1,\gamma_2\in\operatorname{CC}(\bbR^{2})$ (with $d$ the distance of sets),
 and let $\operatorname{PSC}(\bbR^{2})\subseteq \operatorname{CC}(\bbR^{2})$ be the set of all positively oriented simple closed curves    in $\bbR^{2}$.  All these definitions appear in \cite{JeoZla2}.
        
     We can now define the quantities relevant to studying solutions with $H^2$ level sets.
        
\begin{definition}
    Let $(\Theta,\mu)$ be a generalized layer cake representation  $\theta\in L^{1}(\bbR^{2})\cap L^{\infty}(\bbR^{2})$.  If for each
    $\lambda\in\mathcal{L}$ there is $z^{\lambda}\in\operatorname{PSC}(\bbR^{2})$  such that $\Theta^{\lambda}$ is a bounded open set with $\partial\Theta^{\lambda} = \operatorname{im}(z^{\lambda})$, then 
    $(\Theta,\mu)$ (or just $\Theta)$ is \emph{composed of simple closed curves}, and we let
    \begin{equation*}
        R_{\mu}(\Theta) \coloneqq \sup_{\lambda\in\mathcal{L}}
        \ell(z^{\lambda})^{1/2}
        \int_{\mathcal{L}}
        \frac{d|\mu|(\lambda')}
        {\ell(z^{\lambda'})^{1/2}\Delta(z^{\lambda},z^{\lambda'})^{2\alpha}} \qquad\text{and}\qquad
        Q(\Theta) \coloneqq \sup_{\lambda\in\mathcal{L}}
        \ell(z^{\lambda})\norm{z^{\lambda}}_{\dot{H}^{2}}^{2}.
    \end{equation*}
    We also denote $\ell(\partial\Theta^{\lambda}):=\ell(z^{\lambda})$ and $\norm{\partial\Theta^{\lambda}}_{\dot{H}^{2}}:=\norm{z^{\lambda}}_{\dot{H}^{2}}$.
\end{definition}

\smallskip
\textit{Remark}. Note that $\Delta(z^{\lambda},z^{\lambda'}) = \inf_{x\in\bbQ^{2}}\left(
    d(x,\partial\Theta^{\lambda}) + d(x,\partial\Theta^{\lambda'})
\right)$ is measurable in $\lambda'$ since $d(x,\partial\Theta^{\lambda'})$
is jointly measurable in $(x,\lambda')$. Also, a standard measure theory argument
shows that $\lambda'\mapsto\Theta^{\lambda'}$ is measurable with respect to the
topology on the set of all measurable subsets of $\bbR^{2}$ given by the family of pseudometrics
$(A,B)\mapsto |(A\,\triangle\,B)\cap K|$ for all (fixed) compacts $K\subseteq\bbR^{2}$.
Then lower semi-continuity of the perimeter functional (in the sense of Caccioppoli)
with respect to this topology (see \cite[Proposition 3.38]{AFPBV}) shows measurability of $\lambda'\mapsto\ell(z^{\lambda'})$
 (and \cite[Proposition 3.62]{AFPBV} and \cite[Theorem I]{CaffaRivi} show that
$\ell(z^{\lambda'})$ is the perimeter of $\Theta^{\lambda'}$).
Hence $R_{\mu}(\Theta)$ is well-defined.
\smallskip


We will next add to $L_{\mu}(\Theta)<\infty$ the hypothesis
that $\Theta$ is composed of simple closed curves and
$R_{\mu}(\Theta), Q(\Theta) < \infty$. Assuming $Q(\Theta)<\infty$ ensures
a form of scaling-invariant uniform $H^{2}$ regularity of the curves $z^{\lambda}$
because for any $\gamma\in\operatorname{CC}(\bbR^{2})$ and $a>0$ we have
$\norm{a\gamma}_{\dot{H}^{2}}^{2} = \frac{1}{a}\norm{\gamma}_{\dot{H}^{2}}^{2}$
and $\ell(a\gamma) = a\ell(\gamma)$.
Hypothesis $R_{\mu}(\Theta)<\infty$ is a version of $L_{\mu}(\Theta)<\infty$ but with individual points $x$ replaced by whole curves $z^\lambda$, and it also controls how densely
$z^{\lambda}$ of different scales can be packed together.
More specifically, it prevents too many ``small'' curves $z^{\lambda'}$ to be close to some significantly ``larger'' $z^{\lambda}$. When $(\Theta,\mu)$ is given by the layer cake decomposition of some $\theta$, so the $z^{\lambda}$ are (the  boundaries of) the level sets of $\theta$, this assumption
in effect rules out bumps with ``too sharp'' tops or bottoms.
For instance, if $(\Theta,\mu)$ is the layer cake decomposition of
a radial bump $\theta(x)\coloneqq \left[1 - \abs{x}^{\beta}\right]_{+}$
for some $\beta\in(0,1]$, then $R_{\mu}(\Theta)<\infty$ if and only if
$\beta>\max\set{\frac{1}{2},2\alpha}$.
On the other hand, if instead $\theta(x) \coloneqq \left[1 - \abs{x}\right]_{+}^{\beta}$,
then $R_{\mu}(\Theta)<\infty$ if and only if $\beta>2\alpha$.
Note that in both cases $L_{\mu}(\Theta)<\infty$ if and only if $\beta>2\alpha$.

We can now finally state our  main result. Its part (ii) proves well-posedness for low regularity solutions to \eqref{1.1} with $H^2$ level sets (when all $\partial\Theta^\lambda$ in the generalized layer cake representation considered there are connected components of (boundaries of) level sets of the solution). Moreover, its part (iii) shows that for $\alpha\le\frac 16$ and initial conditions satisfying some additional hypotheses, the corresponding solutions can stop existing only when their level sets lose $H^2$-regularity, and never  just due to level set collisions or pile ups.

\begin{theorem}\label{T1.4}
    Let $\theta^{0}\in L^{1}(\bbR^{2})\cap L^{\infty}(\bbR^{2})$ admit a
    generalized layer cake representation $(\Theta,\mu)$
    composed of simple closed curves such that $L_{\mu}(\Theta),  R_{\mu}(\Theta), Q(\Theta) < \infty$, and let $\theta\colon I\to L^{1}(\bbR^{2})\cap L^{\infty}(\bbR^{2})$ be
    the  Lagrangian solution to \eqref{1.1}--\eqref{1.2} with $\alpha\in(0,\frac 12)$
from Theorem~\ref{T1.2}.

(i)    Then $\Phi_{*}^{t}\Theta$ is composed of simple closed curves for each $t\in I$, and
    $\sup_{t\in J}R_{\mu}(\Phi_{*}^{t}\Theta) < \infty$ for each compact interval
    $J\subseteq I$. 
    
    (ii) There is an open interval $I'\subseteq I$ containing 0 such that  $\sup_{t\in J}  Q(\Phi_{*}^{t}\Theta) < \infty$
    for each compact interval    $J\subseteq I'$.  Let $I'$ be the maximal such interval and $T$ its endpoint.
    Then either  $|T|=\infty$ or $\lim_{t\to T} L_\mu(\Phi_{*}^{t}\Theta) = \infty$ or
    $\lim_{t\to T} Q(\Phi_{*}^{t}\Theta) = \infty$.
    
  (iii) If also $\alpha\in\left(0,\frac{1}{6}\right]$,
  $|\mu|(\mathcal L)<\infty$, $\int_{\mathcal{L}}\ell(\partial\Theta^\lambda)\,d|\mu|(\lambda)<\infty$, and $\inf_{\lambda\in\mathcal{L}}\abs{\Theta^{\lambda}}>0$,
 then for
  any endpoint $T$ of $I'$ from (ii) we have either $|T|=\infty$ or $\limsup_{t\to T} Q(\Phi_{*}^{t}\Theta) = \infty$.
\end{theorem}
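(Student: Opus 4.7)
The plan is to argue by contradiction. Suppose $T$ is a finite endpoint of $I'$ with $\limsup_{t\to T} Q(\Phi_*^t\Theta) < \infty$; then $Q$ is in fact bounded on $[0,T)$, so the alternative $\lim_{t\to T} Q = \infty$ in (ii) is ruled out and part~(ii) forces $\lim_{t\to T} L_\mu(\Phi_*^t\Theta) = \infty$. I will derive a contradiction by proving that, under the three extra hypotheses of (iii), $L_\mu$ remains uniformly bounded on $[0,T)$.

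First I record the quantities preserved or controllable along the flow. Since each $\Phi^t$ is measure-preserving, $|(\Phi_*^t\Theta)^\lambda| = |\Theta^\lambda| \ge m_0 \coloneqq \inf_\lambda |\Theta^\lambda| > 0$ for all $t,\lambda$, and the isoperimetric inequality gives $\ell(\partial(\Phi_*^t\Theta)^\lambda) \ge \ell_0 \coloneqq 2\sqrt{\pi m_0}$ uniformly. The total variation $|\mu|(\mathcal L)$ is independent of $t$. For the total length $M(t) \coloneqq \int_{\mathcal L}\ell(\partial(\Phi_*^t\Theta)^\lambda)\,d|\mu|(\lambda)$, the Lipschitz bound from Theorem~\ref{T1.2} gives $\ell(\Phi^t z^\lambda) \le \|\Phi^t\|_{\dot{C}^{0,1}}\,\ell(z^\lambda)$, and Lemma~\ref{L2.1} controls $\|\nabla u(\theta^t)\|_\infty$ by a multiple of $L_\mu(\Phi_*^t\Theta)$, so a Gronwall argument yields the differential inequality $\dot M(t) \lesssim L_\mu(\Phi_*^t\Theta)\,M(t)$ (with a similar bound for $\sup_\lambda\ell$).

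The core technical step is a pointwise estimate of the form
\[
L_\mu(\Phi_*^t\Theta) \le C\bigl(Q(\Phi_*^t\Theta),\, R_\mu(\Phi_*^t\Theta),\, M(t),\, \ell_0,\, m_0\bigr)
\]
valid for all $t\in I'$. To prove it, I would fix $x\in\bbR^2$, pick $\lambda^*$ so that $\delta^* \coloneqq d(x,\partial(\Phi_*^t\Theta)^{\lambda^*})$ is nearly minimal, and split $\int \frac{d|\mu|(\lambda)}{d(x,\partial(\Phi_*^t\Theta)^\lambda)^{2\alpha}}$ at the cutoff $d = 2\delta^*$. In the far regime $d\ge 2\delta^*$, one has $\Delta(z^{\lambda^*},z^\lambda) \ge d/2$, so the defining inequality for $R_\mu$ combined with $\ell\ge\ell_0$ and suitable Cauchy--Schwarz manipulations involving $M$ produces the desired bound. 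In the near regime $d\le 2\delta^*$, the uniform $L^2$-curvature bound coming from $Q$ and the Sobolev embedding $H^2\hookrightarrow C^{1,1/2}$ force each relevant curve to be nearly straight on the scale $\delta^*$; combined with enclosed area $\ge m_0$ and the finiteness of $M$, a dyadic packing argument bounds the $|\mu|$-measure of $\lambda$'s with $d(x,\partial(\Phi_*^t\Theta)^\lambda)\in[2^{-j-1},2^{-j})$ by a quantity decaying faster than $2^{-2\alpha j}$ precisely when $\alpha\le\tfrac{1}{6}$, making the dyadic sum convergent.

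Combining the pointwise estimate with the differential inequality for $M$ closes the loop: on $[0,T)$, part~(i) gives an $R_\mu$ bound and the contradiction hypothesis gives a $Q$ bound, so the pointwise estimate reduces to $L_\mu(t) \le F(M(t))$ with $F$ of sub-exponential growth in $M$; feeding this into $\dot M \lesssim L_\mu\,M$ yields a Gronwall inequality integrable on any finite interval, whence $\sup_{[0,T)} M(t)<\infty$ and therefore $\sup_{[0,T)} L_\mu(\Phi_*^t\Theta) <\infty$, contradicting $\lim_{t\to T} L_\mu = \infty$. The main obstacle I anticipate is the near-regime packing estimate: the $L^2$-curvature bound, the lower area bound, and the finite total length must be combined quantitatively to preclude rapid accumulation of $H^2$ curves near a single point, and it is precisely in balancing the exponents in the resulting dyadic sum that the threshold $\alpha\le\tfrac{1}{6}$ emerges.
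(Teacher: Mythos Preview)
Your overall contradiction framework matches the paper's, but the core technical step is different and, as written, contains a real gap. The paper does \emph{not} prove a static bound of the form $L_\mu \le C(Q,R_\mu,M,\ell_0,m_0)$; instead it proves a \emph{dynamic} differential inequality $\partial_t^+ L_\mu^\eta(\Phi_*^t\Theta) \le C\,L_\mu^\eta(\Phi_*^t\Theta)$ with the constant $C$ depending only on already-bounded quantities, and then applies Gr\"onwall directly to $L_\mu^\eta$. The mechanism is this: at a near-maximizer $x$ for the integral defining $L_\mu^\eta$, the closest point $y$ on $z^{t,\lambda}$ makes $x-y$ \emph{normal} to the curve, and the normal component of $u(\theta^t;x)-u(\theta^t;y)$ enjoys an extra cancellation estimate (Lemma~2.7 of \cite{JeoZla2}, applied with $\beta=\tfrac12$). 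That lemma is precisely where the restriction $\alpha\le\tfrac16$ enters; it is a velocity-regularity statement, not a geometric packing bound. Your proposed dyadic packing argument in the ``near regime'' is not substantiated, and in fact a naive static comparison (relating $L_\mu$ to $\sup_\lambda\int \Delta^{-2\alpha}\,d|\mu|$ via the triangle inequality) would require control of $\sup_\lambda\ell(z^\lambda)$ rather than $\int\ell\,d|\mu|$, which is not among the hypotheses.

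A second difference: the paper bounds the lengths $\ell(z^{t,\lambda})$ (and hence the total length $M(t)$) \emph{independently of $L_\mu$}, via the inequality $\ell(\gamma)\le \operatorname{diam}(\gamma)\cdot\ell(\gamma)\|\gamma\|_{\dot H^2}^2 \le \operatorname{diam}(\gamma)\cdot Q$ together with the elementary fact that $\operatorname{diam}$ grows at most linearly in $t$ (being controlled by $\|u\|_{L^\infty}\lesssim \|\theta^0\|_{L^1}+\|\theta^0\|_{L^\infty}$, not by $\|\nabla u\|_{L^\infty}$). This avoids the circularity in your scheme, where you would need $L_\mu\le F(M)$ and $\dot M\lesssim L_\mu M$ to close simultaneously. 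With lengths and $\sup_\lambda\|z^{t,\lambda}\|_{\dot H^2}$ already bounded, the coefficient in the differential inequality for $L_\mu^\eta$ is fixed and the argument closes cleanly.
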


{\it Remark.}  Note that the last hypothesis in (iii) excludes initial data that are bumps with rounded tops/bottoms (instead, the latter must be flat), and $\Phi^t$ being measure-preserving then also implies  $\inf_{(t,\lambda)\in I'\times  \mathcal{L}} \operatorname{diam}(\Phi^{t}(\Theta^\lambda))>0$.   When also $\sup_{\lambda\in \mathcal{L}} \operatorname{diam}(\Theta^\lambda)<\infty$, then \eqref{4.2} below shows that $\sup_{(t,\lambda)\in [0,T)\times \mathcal{L}}\operatorname{diam}(\Phi^{t}(\Theta^{\lambda}))<\infty$ when $T$ in (iii) is finite.  So in that case the claim of (iii) implies
\[
\limsup_{t\to T}\, \sup_{\lambda\in \mathcal{L}} \norm{\Phi^{t}(\partial\Theta^{\lambda})}_{\dot{H}^{2}}  = \infty
\]
(while the diameters of $\Phi^{t}(\Theta^{\lambda})$ remain uniformly bounded above and below by positive constants), which translates to true loss of $H^2$-regularity of the solution level sets at time $T$.

\vskip 3mm
\noindent
{\bf Acknowledgement.}  Both authors were supported in part by NSF grant DMS-2407615.

\section{Proof of Theorem~\ref{T1.2}}

All constants $C_{\alpha}$ below
can change from one inequality to another, but they always only depend  on $\alpha$.  We will construct solutions to \eqref{1.1}--\eqref{1.2} as limits of solutions  to a family of similar problems with smooth velocities
\beq \lb{11.3}
    u_{\eps}(\theta^t;x)\coloneqq
    \int_{\bbR^{2}}\nabla^{\perp}K_{\eps}(x-y)\,\theta^t(y)dy,
\eeq
where $K_{\eps}(x) \coloneqq \chi(\eps^{-1}{\abs{x}} )K(x)$ for any $\eps>0$ and $x\in\bbR^{2}$, with $\chi\in C^\infty(\bbR)$
is even and satisfying $\mathbbm{1}_{\bbR\setminus(-1,1)} \le \chi\leq \mathbbm{1}_{\bbR\setminus(-1/2,1/2)}$.  Note that for any $n\ge 0$, there is $C_{\alpha,n}$
that only depends on $\alpha,n$ such that the norm of the $n$-linear form
$D^{n}K_{\eps}(x)$ is bounded by
$\frac{C_{\alpha,n}}{\max\{\abs{x},\eps\}^{n+2\alpha}}$.  In particular, this norm is a bounded function of $x\in\bbR^2$ for any $\eps>0$, and we clearly have
\begin{equation}\label{1.8}
    \norm{D^{n}(u_{\eps}(\theta^t))}_{L^{\infty}} \leq
    \norm{D^{n}(\nabla^{\perp}K_{\eps})}_{L^{\infty}}\norm{\theta^t}_{L^1}.
\end{equation}

Let us start with some estimates on the velocity fields from \eqref{11.7} and \eqref{11.3}
in terms of $L_{\mu}(\Theta)$ for some generalized layer cake representation  $(\Theta,\mu)$  of $\theta$ (in these, we drop $t$ from the notation for convenience).


\begin{lemma}\label{L2.1}
    There is $C_{\alpha}$ such that for any
    $\theta\in L^{1}(\bbR^{2})\cap L^{\infty}(\bbR^{2})$
    with a generalized layer cake representation $(\Theta,\mu)$, and for any
    $\eps>0$ and $x\in\bbR^{2}$, we have
    \[
        \abs{D(u_{\eps}(\theta))(x)} \leq
        C_{\alpha}\int_{\mathcal{L}}\frac{d|\mu|(\lambda)}
        {d(x,\partial\Theta^{\lambda})^{2\alpha}}.
    \]
    Therefore,
    \[
        \norm{u_\eps(\theta)}_{\dot{C}^{0,1}} \leq C_{\alpha}L_{\mu}(\Theta)
        \qquad\text{and} \qquad \norm{u(\theta)}_{\dot{C}^{0,1}} \leq C_{\alpha}L_{\mu}(\Theta).
    \]
\end{lemma}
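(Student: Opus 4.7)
The plan is to interchange differentiation and integration so that
\[
    Du_\eps(\theta)(x) = \int_{\mathcal{L}} I_\lambda(x)\,d\mu(\lambda),
    \qquad
    I_\lambda(x) := \int_{\Theta^{\lambda}} D(\nabla^{\perp}K_\eps)(x-y)\,dy,
\]
after substituting the generalized layer cake representation for $\theta$. Both steps are legitimate: differentiation under the integral uses the boundedness of $D^2 K_\eps$ for each fixed $\eps>0$, while Fubini is justified by the hypothesis that $\Theta$ has finite product measure (so $|\Theta^\lambda|$ is integrable with respect to $d|\mu|$) combined with the $L^\infty$ bound on $D(\nabla^\perp K_\eps)$. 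This reduces the lemma to the pointwise estimate
\[
    |I_\lambda(x)| \leq \frac{C_\alpha}{d(x,\partial\Theta^\lambda)^{2\alpha}}.
\]

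Set $r := d(x,\partial\Theta^\lambda)$. Since $B_r(x)$ is connected and disjoint from $\partial\Theta^\lambda$, it either lies entirely in $\Theta^\lambda$ or is essentially disjoint from it. In the disjoint case the domain of integration in $I_\lambda(x)$ is contained in $\{y : |x-y|\geq r\}$, so the kernel estimate $|D(\nabla^\perp K_\eps)(z)| \leq C_\alpha\max\{|z|,\eps\}^{-2-2\alpha}$ integrates directly to $C_\alpha/r^{2\alpha}$ (splitting at $|z|=\eps$ in the sub-case $\eps>r$). In the contained case, decompose $\Theta^\lambda = B_r(x) \sqcup (\Theta^\lambda\setminus B_r(x))$; the outer piece is again bounded by the same argument, but the inner contribution requires cancellation since $|D^2 K_\eps|$ is not integrable at the origin. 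The key step is to apply the divergence theorem, which after substituting $z = x-y$ rewrites
\[
    \int_{B_r(0)} D(\nabla^\perp K_\eps)(z)\,dz = \int_{\partial B_r(0)} \nabla^\perp K_\eps(z)\otimes\nu(z)\,dS(z),
\]
whose operator norm is at most $2\pi r\cdot\|\nabla^\perp K_\eps\|_{L^\infty(\partial B_r(0))} \leq C_\alpha r/\max\{r,\eps\}^{1+2\alpha} \leq C_\alpha/r^{2\alpha}$, regardless of whether $r\geq\eps$ or $r<\eps$.

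Combining the two cases establishes the pointwise bound on $|I_\lambda(x)|$, so integrating over $\lambda$ yields the first displayed inequality of the lemma, and $\|u_\eps(\theta)\|_{\dot{C}^{0,1}} \leq C_\alpha L_\mu(\Theta)$ follows immediately. The corresponding bound for $u(\theta)$ is obtained by letting $\eps\to 0$: since $|\nabla^\perp K(z)|\leq C_\alpha|z|^{-1-2\alpha}$ is locally integrable for $\alpha\in(0,\tfrac{1}{2})$, dominated convergence gives $u_\eps(\theta)(x)\to u(\theta)(x)$ pointwise, and the $\eps$-uniform Lipschitz bound then passes to the limit. The principal obstacle is the divergence-theorem step: the naive pointwise bound on the second-derivative kernel fails inside $B_r(x)$, and the cancellation across the ball must be recovered through integration by parts in order to gain the factor of $r$ that produces the final exponent $-2\alpha$.
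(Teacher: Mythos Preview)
Your proof is correct but takes a different route from the paper. The paper subtracts the constant $\theta(x)$ before differentiating: using that $\nabla^{\perp}K_{\eps}$ is odd, one may write
\[
    D(u_{\eps}(\theta))(x)h = \int_{\bbR^{2}}\int_{\mathcal{L}}
    D(\nabla^{\perp}K_{\eps})(x-y)h
    \bigl(\mathbbm{1}_{\Theta^{\lambda}}(y) - \mathbbm{1}_{\Theta^{\lambda}}(x)\bigr)
    \,d\mu(\lambda)\,dy,
\]
and since the integrand vanishes unless $|x-y|\geq d(x,\partial\Theta^{\lambda})$, the kernel bound integrates directly to $C_{\alpha}/d(x,\partial\Theta^{\lambda})^{2\alpha}$ with no case analysis. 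Your argument instead keeps the raw integral $\int_{\Theta^{\lambda}}D(\nabla^{\perp}K_{\eps})(x-y)\,dy$ and recovers the same cancellation over $B_{r}(x)$ via the divergence theorem. Both approaches exploit the same underlying structure (the kernel is a full derivative), but the paper's subtraction trick avoids the split into the contained/disjoint cases and the boundary-integral computation, at the cost of a brief remark justifying why the constant $\theta(x)$ may be inserted. Your version has the mild advantage of making the role of $\partial\Theta^{\lambda}$ more geometrically explicit; the paper's is shorter.
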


\begin{proof}
    For each $\eps>0$ and $x,h\in\bbR^{2}$, oddness of $\nabla^{\perp}K_{\eps}$ shows that
    \begin{align*}
        u_{\eps}(\theta;x+h) - u_{\eps}(\theta;x)
        = \int_{\bbR^{2}}\left(
            \nabla^{\perp}K_{\eps}(x + h - y) - \nabla^{\perp}K_{\eps}(x - y)
        \right)
        (\theta(y) - \theta(x))\,dy.
    \end{align*}
    Replacing $h$ by $sh$ with $s\in\bbR$, and then taking $s\to 0$ yields
    \begin{align*}
        D(u_{\eps}(\theta))(x)h
        &= \int_{\bbR^{2}}
        D(\nabla^{\perp}K_{\eps})(x-y)h \,
        (\theta(y) - \theta(x))\,dy \\
        &= \int_{\bbR^{2}}\int_{\mathcal{L}}
        D(\nabla^{\perp}K_{\eps})(x-y)h
        \left(
            \mathbbm{1}_{\Theta^{\lambda}}(y)
            - \mathbbm{1}_{\Theta^{\lambda}}(x)
        \right)
        d\mu(\lambda)\,dy.
    \end{align*}
    Note that $\mathbbm{1}_{\Theta^{\lambda}}(y)
    - \mathbbm{1}_{\Theta^{\lambda}}(x) \neq 0$ implies
    $\abs{x - y} \geq d(x,\partial\Theta^{\lambda})$, so
    \begin{align*}
        \abs{D(u_{\eps}(\theta))(x)}
        &\leq \int_{\mathcal{L}}\int_{\abs{x-y} \geq d(x,\partial\Theta^{\lambda})}
        \frac{C_{\alpha}}{\abs{x - y}^{2+2\alpha}}\,dy\,d|\mu|(\lambda)
        \leq \int_{\mathcal{L}}\frac{C_{\alpha}}
        {d(x,\partial\Theta^{\lambda})^{2\alpha}}d|\mu|(\lambda).
    \end{align*}
   This proves the first and second claims, and the third follows by taking 
   $\eps\to 0^{+}$.
\end{proof}

\begin{lemma}\label{L2.2}
    There is $C_{\alpha}$ such that for any
    $\theta\in L^{1}(\bbR^{2})\cap L^{\infty}(\bbR^{2})$ with
    generalized layer cake representations
    $(\Theta_{i},\mu_{i})$ and any measure-preserving homeomorphisms
    $\Phi_{i}\colon\bbR^{2}\to\bbR^{2}$ ($i=1,2$),
    \[
        \norm{u(\Phi_{1*}\theta) - u(\Phi_{2*}\theta)}_{L^{\infty}} \leq
        C_{\alpha} \left(
            L_{\mu_{1}}(\Phi_{1*}\Theta_{1}) + L_{\mu_{2}}(\Phi_{2*}\Theta_{2})
        \right)
        \norm{\Phi_{1} - \Phi_{2}}_{L^{\infty}}.
    \]
\end{lemma}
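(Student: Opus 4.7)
Fix $x \in \bbR^2$ and set $\eta := \|\Phi_1-\Phi_2\|_{L^\infty}$. The first step is to use that each $\Phi_i$ is measure-preserving to rewrite
$u(\Phi_{i*}\theta)(x) = \int_{\bbR^2} \nabla^\perp K(x-\Phi_i(z))\,\theta(z)\,dz$, so that the velocity difference equals $\int_{\bbR^2}[\nabla^\perp K(x-\Phi_1(z))-\nabla^\perp K(x-\Phi_2(z))]\theta(z)\,dz$. To obtain a bound involving both $L_{\mu_i}$ factors symmetrically, I would decompose $\theta = \tfrac12\int_{\mathcal L_1}\mathbbm 1_{\Theta_1^\lambda}\,d\mu_1 + \tfrac12\int_{\mathcal L_2}\mathbbm 1_{\Theta_2^\lambda}\,d\mu_2$ and interchange to obtain
\[
u(\Phi_{1*}\theta)(x) - u(\Phi_{2*}\theta)(x) = \tfrac12\sum_{j=1}^{2}\int_{\mathcal L_j}J_j^\lambda\,d\mu_j,
\]
with $J_j^\lambda := \int_{\Theta_j^\lambda}[\nabla^\perp K(x-\Phi_1(z))-\nabla^\perp K(x-\Phi_2(z))]\,dz$. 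Writing $\Psi := \Phi_{3-j}\circ\Phi_j^{-1}$ (a measure-preserving homeomorphism with $\|\Psi-\mathrm{Id}\|_{L^\infty}=\eta$) and $A := (\Phi_{j*}\Theta_j)^\lambda$, changes of variables $y=\Phi_j(z)$ and $w=\Psi(y)$ then recast $J_j^\lambda = \pm\int_A[\nabla^\perp K(x-y) - \nabla^\perp K(x-\Psi(y))]\,dy$.

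The heart of the proof is the pointwise bound $|J_j^\lambda| \le C_\alpha\eta/d(x,\partial A)^{2\alpha}$, which I would establish by splitting cases on $d := d(x,\partial A)$. When $d\ge 2\eta$, I decompose $A = (A\cap B_d(x))\cup(A\setminus B_d(x))$. On the far piece $A\setminus B_d(x)$ both $|x-y|$ and $|x-\Psi(y)|$ exceed $d/2$, and the segment between $y$ and $\Psi(y)$ stays at distance $\ge d/2$ from $x$; the mean value theorem together with $|D\nabla^\perp K(w)|\le C|w|^{-2-2\alpha}$ then gives $|\text{integrand}|\le C\eta|x-y|^{-2-2\alpha}$, integrating to $C\eta/d^{2\alpha}$. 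The near piece is empty when $x\notin A$ and equals $B_d(x)$ when $x\in A$; in the latter case oddness gives $\int_{B_d(x)}\nabla^\perp K(x-y)\,dy=0$, while the measure-preserving change of variables $w=\Psi(y)$ converts the second term into $\int_{\Psi(B_d(x))}\nabla^\perp K(x-w)\,dw$. Since $\Psi(B_d(x))\triangle B_d(x)\subseteq B_{d+\eta}(x)\setminus B_{d-\eta}(x)$, this reduces to $\int_{d-\eta}^{d+\eta}r^{-2\alpha}\cdot 2\pi\,dr\le C\eta/d^{2\alpha}$ (using $d\ge 2\eta$, so $d-\eta\ge d/2$). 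When $d<2\eta$ the target $C\eta/d^{2\alpha}$ is at least $C\eta^{1-2\alpha}$, which I would reach by direct bounds: on $A\cap B_{2\eta}(x)$, apply $|\nabla^\perp K|\le C|x-y|^{-1-2\alpha}$ to each term separately, using the change of variables $w=\Psi(y)$ (whose image lies in $B_{3\eta}(x)$) to handle the second term, giving $\le C\eta^{1-2\alpha}$; on $A\setminus B_{2\eta}(x)$ the mean value argument above again gives $\le C\eta^{1-2\alpha}$.

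Plugging the main estimate into the symmetric decomposition and invoking the definition of $L_{\mu_j}$ then yields
\[
|u(\Phi_{1*}\theta)(x)-u(\Phi_{2*}\theta)(x)| \le C_\alpha\eta\sum_{j=1}^{2}\int_{\mathcal L_j}\frac{d|\mu_j|(\lambda)}{d(x,\partial(\Phi_{j*}\Theta_j)^\lambda)^{2\alpha}} \le C_\alpha\eta\bigl(L_{\mu_1}(\Phi_{1*}\Theta_1)+L_{\mu_2}(\Phi_{2*}\Theta_2)\bigr),
\]
for every $x\in\bbR^2$, proving the lemma. The main obstacle is the case $x\in A$ with $d\ge 2\eta$: a naive absolute-value bound $|J_j^\lambda|\le\int_{A\triangle\Psi(A)}|\nabla^\perp K(x-y)|\,dy$ is not controllable by $\eta/d^{2\alpha}$ without perimeter information on $\partial A$, so it is essential to exploit the odd symmetry of $\nabla^\perp K$ to cancel the potentially large contribution from the ball $B_d(x)\subseteq A$ and thereby reduce to an annular integral whose size is governed explicitly by $\eta$ and $d$.
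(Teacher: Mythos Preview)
Your argument is correct and takes a genuinely different route from the paper's. The paper splits the velocity difference radially into a near part $I_1$ (over $|x-y|\le 2d$) and a far part $I_2$; for $I_1$ it subtracts the constants $\theta_i(x)$ via oddness and then invokes the layer cakes of both $\theta_1$ and $\theta_2$, while for $I_2$ it changes variables $y\mapsto\Phi_i(y)$ and further decomposes into five pieces $I_4,\ldots,I_8$ handled by the mean value theorem and annular estimates with an $R\to\infty$ limit. You instead reduce everything to a single per-layer bound $|J_j^\lambda|\le C_\alpha\eta/d(x,\partial A)^{2\alpha}$, established by casework on $d(x,\partial A)$ versus $\eta$; oddness enters not to subtract a constant but to kill the integral over the full ball $B_d(x)\subseteq A$, leaving only an annular remainder governed by $\Psi(B_d(x))\triangle B_d(x)$. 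Your route is arguably cleaner, and the symmetrization is in fact unnecessary: running your argument with only the representation $(\Theta_1,\mu_1)$ already yields the bound $C_\alpha\eta\,L_{\mu_1}(\Phi_{1*}\Theta_1)$ (hence also $\min$ rather than the sum), whereas the paper's treatment of $I_1$ genuinely uses both layer cakes.

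One technical point to tighten: the interchange of $\int_{\bbR^2}$ and $\int_{\mathcal L_j}$ is not automatic, since $|\mu_j|(\mathcal L_j)$ may be infinite and the absolute-value bound $\int_A |\nabla^\perp K(x-y)-\nabla^\perp K(x-\Psi(y))|\,dy\le C_\alpha\eta^{1-2\alpha}$ is merely uniform in $\lambda$, not $|\mu_j|$-integrable. The paper sidesteps this issue via its radial truncation at $R$; in your framework the cleanest fix is to first prove the estimate for the mollified kernel $K_\eps$ (where Fubini is immediate because $\nabla^\perp K_\eps$ is bounded and $\Theta_j$ has finite product measure) and then pass to the limit $\eps\to 0$ using Lemma~\ref{L2.3}.
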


\begin{proof}
    Let    $\mathcal{L}_{i}$ the measurable space associated to $(\Theta_{i},\mu_{i})$ and
    $\theta_{i}\coloneqq \theta\circ\Phi_{i}^{-1}$ for $i=1,2$.  Let $d\coloneqq\norm{\Phi_{1} - \Phi_{2}}_{L^{\infty}}$ and fix any $x\in\bbR^{2}$.
    Then $u(\Phi_{1*}\theta;x) - u(\Phi_{2*}\theta;x)$ is the sum of
    \begin{align*}
        I_{1} &\coloneqq \int_{\abs{x - y}\leq 2d}
        \nabla^{\perp}K(x - y)(\theta_{1}(y) - \theta_{2}(y))\,dy, \\
        I_{2} &\coloneqq \int_{\abs{x - y} > 2d}
        \nabla^{\perp}K(x - y)(\theta_{1}(y) - \theta_{2}(y))\,dy.
    \end{align*}

    \textbf{Estimate for $I_{1}$.} By oddness of $\nabla^{\perp}K$, we have
    \[
        I_{1} = \int_{\abs{x - y}\leq 2d}\nabla^{\perp}K(x - y)
        (\theta_{1}(y) - \theta_{1}(x))\,dy
        - \int_{\abs{x - y}\leq 2d}\nabla^{\perp}K(x - y)
        (\theta_{2}(y) - \theta_{2}(x))\,dy.
    \]
Since
    \[
        I_{3} \coloneqq \int_{\abs{x - y} \leq 2d}
        \frac{\abs{\theta_{1}(y) - \theta_{1}(x)}}{\abs{x - y}^{1 + 2\alpha}}\,dy
        \leq \int_{\mathcal{L}_{1}}\int_{\abs{x - y}\leq 2d}
        \frac{\abs{\mathbbm{1}_{\Phi_{1}(\Theta_{1}^{\lambda})}(y)
        - \mathbbm{1}_{\Phi_{1}(\Theta_{1}^{\lambda})}(x)}}
        {\abs{x - y}^{1+2\alpha}}\,dy\,d|\mu_{1}|(\lambda)
    \]
    and (as in the proof of Lemma~\ref{L2.1})
    we have $\abs{x - y} \geq d(x,\partial\Phi_{1}(\Theta_{1}^{\lambda}))$
    whenever the  last integrand  is nonzero,
    we see that
    \begin{align*}
        I_{3} &\leq \int_{\mathcal{L}_{1}}\int_{\abs{x - y}\leq 2d}
        \frac{1}{\abs{x - y}\, d(x,\partial\Phi_{1}(\Theta_{1}^{\lambda}))^{2\alpha}}
        \,dy\,d|\mu_{1}|(\lambda)
        \leq 4\pi L_{\mu_{1}}(\Phi_{1*}\Theta_{1})d.
    \end{align*}
    The same argument for $\theta_{2}$ in place of $\theta_{1}$ now yields
    \begin{equation} \label{2.100}
        \abs{I_{1}}\leq C_{\alpha}\left(
            L_{\mu_{1}}(\Phi_{1*}\Theta_{1}) + L_{\mu_{2}}(\Phi_{2*}\Theta_{2})
        \right)d.
    \end{equation}

    \textbf{Estimate for $I_{2}$.} For each $R>2d$ let
    \[
        I_{2}^{R}\coloneqq \int_{2d<\abs{x-y}\leq R}
        \nabla^{\perp}K(x-y)(\theta_{1}(y) - \theta_{2}(y))\,dy,
    \]
    so that $I_{2} = \lim_{R\to\infty}I_{2}^{R}$. Fix $R>2d$, and then $\Phi_i$ being measure-preserving yields
    \begin{align*}
        I_{2}^{R} &= \int_{2d< \abs{x - y} \leq R}
        \nabla^{\perp}K(x - y)(\theta_{1}(y) - \theta_{1}(x))\,dy
        \\&\quad\quad\quad\quad\quad
        - \int_{2d < \abs{x - y} \leq R}
        \nabla^{\perp}K(x - y)(\theta_{2}(y) - \theta_{1}(x))\,dy \\
        &= \int_{2d < \abs{x - \Phi_{1}(y)} \leq R}\nabla^{\perp}K(x - \Phi_{1}(y))
        (\theta(y) - \theta_{1}(x))\,dy
        \\&\quad\quad\quad\quad\quad
        - \int_{2d < \abs{x - \Phi_{2}(y)} \leq R}\nabla^{\perp}K(x - \Phi_{2}(y))
        (\theta(y) - \theta_{1}(x))\,dy \\
        &= \int_{\abs{x - \Phi_{1}(y)},\abs{x - \Phi_{2}(y)}\in (2d,R]}
        \left[
            \nabla^{\perp}K(x - \Phi_{1}(y)) - \nabla^{\perp}K(x - \Phi_{2}(y))
        \right]
        (\theta(y) - \theta_{1}(x))\,dy
        \\&\quad\quad\quad\quad\quad
        + \int_{\abs{x - \Phi_{2}(y)} \leq 2d < \abs{x - \Phi_{1}(y)} \leq R}
        \nabla^{\perp}K(x - \Phi_{1}(y))(\theta(y) - \theta_{1}(x))\,dy
        \\&\quad\quad\quad\quad\quad
        - \int_{\abs{x - \Phi_{1}(y)} \leq 2d < \abs{x - \Phi_{2}(y)} \leq R}
        \nabla^{\perp}K(x - \Phi_{2}(y))(\theta(y) - \theta_{1}(x))\,dy
        \\&\quad\quad\quad\quad\quad
        + \int_{2d < \abs{x - \Phi_{1}(y)} \leq R < \abs{x - \Phi_{2}(y)}}
        \nabla^{\perp}K(x - \Phi_{1}(y))(\theta(y) - \theta_{1}(x))\,dy
        \\&\quad\quad\quad\quad\quad
        - \int_{2d < \abs{x - \Phi_{2}(y)} \leq R < \abs{x - \Phi_{1}(y)}}
        \nabla^{\perp}K(x - \Phi_{2}(y))(\theta(y) - \theta_{1}(x))\,dy.
    \end{align*}
    Let us denote the integrals on the right-hand side  $I_{4},I_{5},I_{6},I_{7},I_{8}$
     (in the order of appearance).

    To estimate $I_{4}$, note that for any $y$ in the domain of integration we have
    \[
        \min_{\eta\in[0,1]}
        \abs{x - (1-\eta)\Phi_{1}(y) - \eta\Phi_{2}(y)}
        \geq \abs{x - \Phi_{1}(y)} - d
        \geq \frac{\abs{x - \Phi_{1}(y)}}{2},
    \]
     so the mean value theorem shows that
    \[
        \abs{\nabla^{\perp}K(x - \Phi_{1}(y)) - \nabla^{\perp}K(x - \Phi_{2}(y))}
        \leq \frac{C_{\alpha}d}{\abs{x - \Phi_{1}(y)}^{2 + 2\alpha}}.
    \]
    The change of variables formula now yields
    \begin{align*}
        \abs{I_{4}} &\leq \int_{\abs{x - y} > 2d}
        \frac{C_{\alpha}d\abs{\theta_{1}(y) - \theta_{1}(x)}}
        {\abs{x - y}^{2 + 2\alpha}}\,dy \\
        &\leq \int_{\mathcal{L}_{1}}\int_{\abs{x - y} > 2d}
        \frac{C_{\alpha}d\abs{\mathbbm{1}_{\Phi_{1}(\Theta_{1}^{\lambda})}(y)
        - \mathbbm{1}_{\Phi_{1}(\Theta_{1}^{\lambda})}(x)}}
        {\abs{x - y}^{2 + 2\alpha}}\,dy\,d|\mu_{1}|(\lambda).
    \end{align*}
    Again, $\abs{x - y} \geq d(x,\partial\Phi_{1}(\Theta_{1}^{\lambda}))$
    holds whenever the last integrand  is nonzero, so
    \begin{align*}
        \abs{I_{4}} &\leq \int_{\mathcal{L}_{1}}
        \int_{\abs{x - y}\geq d(x,\partial\Phi_{1}(\Theta_{1}^{\lambda}))}
        \frac{C_{\alpha}d}{\abs{x - y}^{2+2\alpha}}\,dy\,d|\mu_{1}|(\lambda)
        \leq C_{\alpha}L_{\mu_{1}}(\Phi_{1*}\Theta_{1})d.
    \end{align*}

    For $I_{5}$, note that for any $y$ in the domain of integration we have
    \[
        \abs{x - \Phi_{1}(y)} \leq \abs{x - \Phi_{2}(y)} + d \leq 3d.
    \]
    By applying again  change of variables we obtain
    \[
        \abs{I_{5}} \leq \int_{\abs{x - y} \leq 3d}
        \frac{C_{\alpha}\abs{\theta_{1}(y) - \theta_{1}(x)}}{\abs{x - y}^{1 + 2\alpha}}\,dy,
    \]
    so the same argument as in the estimate for $I_{3}$
    shows $\abs{I_{5}} \leq C_{\alpha}L_{\mu_{1}}(\Phi_{1*}\Theta_{1})d$.
And clearly
    \[
        \abs{I_{6}} \leq \int_{\abs{x - \Phi_{1}(y)} \leq 2d}
        \frac{C_{\alpha}\abs{\theta(y) - \theta_{1}(x)}}
        {\abs{x - \Phi_{1}(y)}^{1+2\alpha}}\,dy
        = \int_{\abs{x - y}\leq 2d}\frac{C_{\alpha}\abs{\theta_{1}(y) - \theta_{1}(x)}}
        {\abs{x - y}^{1+2\alpha}}\,dy,
    \]
    so again $\abs{I_{6}} \leq C_{\alpha}L_{\mu_{1}}(\Phi_{1*}\Theta_{1})d$.

    To estimate $I_{7}$, note that for any $y$ in the domain of integration we have
    \begin{align*}
        \abs{x - \Phi_{1}(y)} \geq \abs{x - \Phi_{2}(y)} - d > R - d,
    \end{align*}
    so the change of variables formula yields
    \begin{align*}
        \abs{I_{7}} \leq \int_{R - d < \abs{x - y} \leq R}
        \frac{C_{\alpha}\norm{\theta}_{L^{\infty}}}
        {\abs{x - y}^{1 + 2\alpha}}\,dy
        \leq \frac{ C_{\alpha}\norm{\theta}_{L^{\infty}}d}{R^{2\alpha}}
    \end{align*}
    because ${R} >2d$.
    In the same way we also obtain $|I_{8}|\le \frac{C_{\alpha}\norm{\theta}_{L^{\infty}}d}{R^{2\alpha}}$.
    
    Collecting the above estimates and letting $R\to\infty$, we see that
    $\abs{I_{2}} \leq C_{\alpha}L_{\mu_{1}}(\Phi_{1*}\Theta_{1})d$.
    This and \eqref{2.100} now hold uniformly in $x\in\bbR^{2}$, finishing the proof.
\end{proof}

\begin{lemma}\label{L2.3}
    There is $C_{\alpha}$ such that for any
    $\theta\in L^{1}(\bbR^{2})\cap L^{\infty}(\bbR^{2})$ and $\eps>0$ we have
    \[
        \norm{u(\theta) - u_{\eps}(\theta)}_{L^{\infty}}
        \leq C_{\alpha}\norm{\theta}_{L^{\infty}}\eps^{1-2\alpha}.
    \]
\end{lemma}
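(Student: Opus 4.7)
The plan is to exploit the fact that $K$ and $K_\eps$ agree outside a small ball, so that the difference of their derivatives is supported near the origin, and then estimate the resulting integrand using the pointwise bounds on $D(\nabla^\perp K)$ type quantities stated in the paragraph preceding \eqref{1.8}.

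First I would observe that since the cutoff $\chi$ satisfies $\chi \equiv 1$ on $\bbR\setminus(-1,1)$, we have $K_\eps(x) = K(x)$ whenever $\abs{x} \geq \eps$, and therefore
\[
    \nabla^\perp K(x) - \nabla^\perp K_\eps(x) = 0 \qquad \text{for all } \abs{x} \geq \eps.
\]
Hence
\[
    u(\theta;x) - u_\eps(\theta;x) = \int_{\abs{x - y} < \eps}
    \bigl[\nabla^\perp K(x - y) - \nabla^\perp K_\eps(x - y)\bigr]\,\theta(y)\,dy,
\]
so pulling out $\norm{\theta}_{L^\infty}$ and translating, it suffices to bound
\[
    \int_{\abs{z} < \eps}\abs{\nabla^\perp K(z) - \nabla^\perp K_\eps(z)}\,dz
    \le C_\alpha \eps^{1 - 2\alpha}.
\]

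Next I would split the domain of integration into $\{\abs{z} \le \eps/2\}$ and $\{\eps/2 < \abs{z} < \eps\}$. Since $\chi \equiv 0$ on $(-1/2,1/2)$, we have $K_\eps \equiv 0$ on $\{\abs{z} \le \eps/2\}$, so on this region the integrand equals $\abs{\nabla^\perp K(z)} \leq C_\alpha \abs{z}^{-1-2\alpha}$. Polar coordinates then give
\[
    \int_{\abs{z}\leq \eps/2}\frac{dz}{\abs{z}^{1+2\alpha}}
    = \frac{2\pi}{1 - 2\alpha}\left(\tfrac{\eps}{2}\right)^{1 - 2\alpha}
    \leq C_\alpha \eps^{1-2\alpha},
\]
which is finite and of the correct order because $\alpha < \tfrac12$. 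On the annulus $\eps/2 < \abs{z} < \eps$, the pointwise bound from the paragraph before \eqref{1.8} (applied with $n=1$) gives $\abs{\nabla^\perp K_\eps(z)} \leq C_\alpha / \max(\abs{z},\eps)^{1 + 2\alpha} \leq C_\alpha \eps^{-1-2\alpha}$, and the same bound trivially holds for $\abs{\nabla^\perp K(z)}$ there. Integrating the constant $C_\alpha \eps^{-1-2\alpha}$ over an annulus of area $\leq \pi\eps^2$ contributes another $C_\alpha \eps^{1-2\alpha}$.

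Combining these two estimates and multiplying through by $\norm{\theta}_{L^\infty}$ yields the desired inequality with a uniform constant $C_\alpha$. There is really no serious obstacle here; the only thing to be slightly careful about is the structure of $\chi$, specifically that $K_\eps$ vanishes (rather than merely being comparable to $K$) on $\{\abs{z}\leq\eps/2\}$, so that the singular part of $\nabla^\perp K$ near the origin is handled by direct integration using $\alpha<\tfrac12$, while on the annulus both kernels are already smoothed at scale $\eps$.
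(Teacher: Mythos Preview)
Your argument is correct and follows essentially the same approach as the paper: both restrict to $\abs{x-y}<\eps$ using that $K_\eps=K$ outside this ball, bound the integrand pointwise via the kernel estimates, and integrate in polar coordinates using $\alpha<\tfrac12$. The only cosmetic difference is that the paper avoids your split into $\{\abs{z}\le\eps/2\}$ and the annulus by simply noting that $\abs{\nabla^\perp K(z)-\nabla^\perp K_\eps(z)}\le C_\alpha\abs{z}^{-1-2\alpha}$ holds on the entire ball $\abs{z}<\eps$ (since both $\abs{\nabla^\perp K(z)}$ and $\abs{\nabla^\perp K_\eps(z)}$ are bounded by $C_\alpha\abs{z}^{-1-2\alpha}$), and integrates directly.
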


\begin{proof}
    Since $\nabla^{\perp}K_{\eps}(x) = \nabla^{\perp}K(x)$
    when $\abs{x}\geq\eps$, for any $x\in\bbR^{2}$ we have
    \begin{align*}
        \abs{u(\theta;x) - u_{\eps}(\theta;x)}
        &\leq \int_{\abs{x - y}\leq \eps}
        \abs{\nabla^{\perp}K(x-y) - \nabla^{\perp}K_{\eps}(x-y)}
        \norm{\theta}_{L^{\infty}}\,dy \\
        &\leq \int_{\abs{x - y}\leq \eps}
        \frac{C_{\alpha}\norm{\theta}_{L^{\infty}}}{\abs{x - y}^{1 + 2\alpha}}\,dy
        = C_{\alpha}\norm{\theta}_{L^{\infty}}\eps^{1-2\alpha}.
    \end{align*}
\end{proof}

Now, fix any initial datum $\theta^{0}\in L^{1}(\bbR^{2})\cap L^{\infty}(\bbR^{2})$
admitting a generalized layer cake representation
$(\Theta,\mu)$ with $L_{\mu}(\Theta) < \infty$.
Fix any $\eps>0$ and consider the ODE
\begin{equation}\label{2.1}
            \partial_{t}\Psi_{\eps}^{t}
            = u_{\eps}((\mathrm{Id} + \Psi_{\eps}^{t})_{*}\theta^{0}) \circ ({\rm Id} + \Psi_{\eps}^{t})
            \qquad\text{and}\qquad
            \Psi_{\eps}^{0} \equiv 0
\end{equation}
with $\Psi_\eps^t \in BC(\bbR^{2};\bbR^{2})$ (the space of bounded continuous functions from $\bbR^{2}$ to $\bbR^{2}$).
That is, $\Phi_{\eps}^{t}\coloneqq \mathrm{Id} + \Psi_{\eps}^{t}$
is the flow map generated by the vector field
$u_{\eps}(\Phi_{\eps *}^{t}\theta^{0})$.
However, until we show that $\Phi_{\eps}^{t}$ is a measure-preserving homeomorphism, $\Phi_{\eps *}^{t}\theta^{0}$ will be the pushforward of the measure $\theta^0(y)dy$ by $\Phi_{\eps}^{t}$ (which is $(\theta^0\circ(\Phi_\eps^t)^{-1})(y)dy$ when $\Phi_{\eps}^{t}$ is a measure-preserving homeomorphism) and we replace \eqref{11.3} by
\[
    u_\eps(\nu;x) \coloneqq \int_{\bbR^{2}}   \nabla^{\perp}K_\eps(x - y)\,d\nu(y)
\]
for a finite signed Borel measure $\nu$ on $\bbR^{2}$.
We will next show that given any $\tht^0$ as above, \eqref{2.1} is globally well-posed in $BC(\bbR^{2};\bbR^{2})$.


\begin{lemma}\label{L2.4}
    For each $F\in BC(\bbR^{2};\bbR^{2})$, let
    \[
        \mathcal{F}(F) \coloneqq u_{\eps}((\mathrm{Id} + F)_{*}\theta^{0})\circ ({\rm Id} + F).
    \]
    Then $\mathcal{F}\colon    BC(\bbR^{2};\bbR^{2})\to BC(\bbR^{2};\bbR^{2})$
    is well-defined and  Lipschitz continuous.
\end{lemma}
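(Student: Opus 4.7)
The plan is to exploit the fact that for the regularized kernel, all derivatives of $\nabla^\perp K_\eps$ are uniformly bounded on $\bbR^2$, with bounds depending only on $\alpha$ and $\eps$ (by the remark preceding \eqref{1.8}). I will fix constants $M_0, M_1$ depending only on $\alpha,\eps$ with $\norm{\nabla^\perp K_\eps}_{L^\infty} \le M_0$ and $\norm{D(\nabla^\perp K_\eps)}_{L^\infty} \le M_1$. For any $F \in BC(\bbR^2;\bbR^2)$, the pushforward measure $\nu_F \coloneqq (\mathrm{Id}+F)_*\theta^0$ is a finite signed Borel measure with total variation at most $\norm{\theta^0}_{L^1}$, so the change of variables formula yields the pointwise representation
\begin{equation*}
u_\eps(\nu_F;y) = \int_{\bbR^2} \nabla^\perp K_\eps(y-x-F(x))\,\theta^0(x)\,dx,
\end{equation*}
which will serve as the workhorse for both halves of the lemma.

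For well-definedness, I will apply dominated convergence to the representation above to conclude $u_\eps(\nu_F)\in BC(\bbR^2;\bbR^2)$ with $\norm{u_\eps(\nu_F)}_{L^\infty} \le M_0\norm{\theta^0}_{L^1}$, and differentiate under the integral (justified by $\norm{D(\nabla^\perp K_\eps)}_{L^\infty}\le M_1$ and $\theta^0\in L^1$) to obtain $\norm{u_\eps(\nu_F)}_{\dot C^{0,1}} \le M_1\norm{\theta^0}_{L^1}$. Since $\mathrm{Id}+F$ is continuous, the composition $\mathcal{F}(F) = u_\eps(\nu_F)\circ(\mathrm{Id}+F)$ will then be bounded and continuous, hence lie in $BC(\bbR^2;\bbR^2)$.

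For Lipschitz continuity, given $F_1, F_2\in BC(\bbR^2;\bbR^2)$, I will split
\begin{equation*}
\mathcal{F}(F_1)(x) - \mathcal{F}(F_2)(x) = \bigl[u_\eps(\nu_{F_1};x+F_1(x)) - u_\eps(\nu_{F_1};x+F_2(x))\bigr] + \bigl[u_\eps(\nu_{F_1};x+F_2(x)) - u_\eps(\nu_{F_2};x+F_2(x))\bigr].
\end{equation*}
The first bracket will be controlled by the Lipschitz estimate $\norm{u_\eps(\nu_{F_1})}_{\dot C^{0,1}} \le M_1\norm{\theta^0}_{L^1}$ from the previous paragraph. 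For the second, the integral representation gives
\begin{equation*}
u_\eps(\nu_{F_1};y) - u_\eps(\nu_{F_2};y) = \int_{\bbR^2}\bigl[\nabla^\perp K_\eps(y-x-F_1(x)) - \nabla^\perp K_\eps(y-x-F_2(x))\bigr]\theta^0(x)\,dx,
\end{equation*}
and the mean value theorem will bound the integrand pointwise by $M_1\norm{F_1-F_2}_{L^\infty}|\theta^0(x)|$, so the second bracket is again controlled by $M_1\norm{\theta^0}_{L^1}\norm{F_1-F_2}_{L^\infty}$. Adding these estimates will yield a Lipschitz constant of $2M_1\norm{\theta^0}_{L^1}$ for $\mathcal{F}$.

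I do not anticipate any real obstacle here; this is a technical setup lemma ensuring that the ODE \eqref{2.1} has a Lipschitz right-hand side so that Picard--Lindel\"of applies. The only mildly subtle point is switching between the abstract pushforward measure $\nu_F$ and its explicit representation via $\theta^0$ and $F$, which is what converts changes in $F$ into changes in the kernel argument and delivers the $\norm{F_1-F_2}_{L^\infty}$ dependence.
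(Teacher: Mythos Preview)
Your argument is correct and essentially the same as the paper's. The only cosmetic difference is that the paper collapses your two-term splitting into a single step: writing $(\mathcal{F}(F_1)-\mathcal{F}(F_2))(x)=\int_{\bbR^2}\bigl[\nabla^\perp K_\eps(x-y+F_1(x)-F_1(y))-\nabla^\perp K_\eps(x-y+F_2(x)-F_2(y))\bigr]\theta^0(y)\,dy$ and bounding the kernel-argument difference by $2\norm{F_1-F_2}_{L^\infty}$ directly, which yields the same Lipschitz constant $2\norm{D(\nabla^\perp K_\eps)}_{L^\infty}\norm{\theta^0}_{L^1}$.
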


\begin{proof}
    Clearly $\mathcal{F}(F)\in C(\bbR^{2}; \bbR^{2})$
    for any $F\in BC(\bbR^{2};\bbR^{2})$.
    For any $F_{1},F_{2}\in BC(\bbR^{2};\bbR^{2})$ and $x\in\bbR^{2}$ we see that $(\mathcal{F}(F_{1}) - \mathcal{F}(F_{2}))(x)$ equals
    \begin{equation*}
        \begin{aligned}
&\             \int_{\bbR^{2}} \nabla^{\perp}K_{\eps}(x + F_{1}(x) - y)
            \,d(\mathrm{Id} + F_{1})_{*}\theta^{0}(y)
            - \int_{\bbR^{2}} \nabla^{\perp}K_{\eps}(x + F_{2}(x) - y)
            \,d(\mathrm{Id} + F_{2})_{*}\theta^{0}(y)
            \\&\ 
            = \int_{\bbR^{2}}
            \left[
                \nabla^{\perp}K_{\eps}(x - y + F_{1}(x) - F_{1}(y))
                - \nabla^{\perp}K_{\eps}(x - y + F_{2}(x) - F_{2}(y))
            \right]
            \theta^{0}(y)\,dy,
        \end{aligned}
    \end{equation*}
    so
    \[
        \norm{\mathcal{F}(F_{1}) - \mathcal{F}(F_{2})}_{L^{\infty}}
        \leq 2\norm{D(\nabla^{\perp}K_{\eps})}_{L^{\infty}}\norm{\theta^{0}}_{L^{1}}
        \norm{F_{1} - F_{2}}_{L^{\infty}}.
    \]
    Since $\mathcal{F}(0) = u_{\eps}(\theta^{0})$ is bounded,
    both claims follow from this.
\end{proof}

Lemma~\ref{L2.4} shows that \eqref{2.1} is globally well-posed, and we  let
$\Phi_{\eps}^{t}\coloneqq \mathrm{Id} + \Psi_{\eps}^{t}$ and
$\theta_{\eps}^{t}\coloneqq\Phi_{\eps*}^{t}\theta^{0}$, with the latter being for now only a finite signed Borel measure.
We will next show that $\Phi_{\eps}^{t}$ is in fact a measure-preserving homeomorphism, which will mean that $\theta_{\eps}^{t}=\theta^0\circ(\Phi_\eps^t)^{-1}$ and  it is also an $L^{1}\cap L^\infty$ function.

Clearly the ODE
\begin{equation}\label{2.2}
    \partial_{t}G^{t} = u_{\eps}(\theta_{\eps}^{t})\circ ({\rm Id} + G^{t})
\end{equation}
is globally well-posed in $BC(\bbR^{2};\bbR^{2})$ for any initial data at any initial time.
For each $t_{0},t_1\in\bbR$, let $\Gamma_{\eps}^{t_{0},t}$ be the unique solution to \eqref{2.2}
with initial data $\Gamma_{\eps}^{t_{0},t_{0}}: = 0$ at time $t=t_0$, and consider
$G^{t} \coloneqq \Gamma_{\eps}^{t_{0},t_{1}} +
\Gamma_{\eps}^{t_{1},t}\circ(\mathrm{Id} + \Gamma_{\eps}^{t_{0},t_{1}})$.  Then
 $G^t$ solves \eqref{2.2} and 
$G^{t_{1}} = \Gamma_{\eps}^{t_{0},t_{1}}$, so uniqueness of the solution
with the initial data $\Gamma_{\eps}^{t_{0},t_{1}}$ at time $t=t_{1}$ shows that
\[
    \mathrm{Id} + \Gamma_{\eps}^{t_{0},t}
    = \mathrm{Id} + G^{t}
    = (\mathrm{Id} + \Gamma_{\eps}^{t_{1},t})
    \circ(\mathrm{Id} + \Gamma_{\eps}^{t_{0},t_{1}})
\]
 for all $t\in\bbR$. Letting $t:=t_{0}$ shows that
$(\mathrm{Id} + \Gamma_{\eps}^{t_{1},t_{0}})
\circ(\mathrm{Id} + \Gamma_{\eps}^{t_{0},t_{1}}) = \mathrm{Id}$,
so we conclude that each $\mathrm{Id} + \Gamma_{\eps}^{t_{0},t}$ is a homeomorphism.
Then so is $\Phi_{\eps}^{t} = \mathrm{Id} + \Gamma_{\eps}^{0,t}$.

Letting $BC^{1}(\bbR^{2};\bbR^{2})$ be the space of bounded $C^{1}$ functions from $\bbR^{2}$ to $\bbR^{2}$ with bounded first derivatives, we see that for any $F\in BC^{1}(\bbR^{2};\bbR^{2})$ and $x,h\in\bbR^{2}$ we have
\begin{align*}
    D(u_{\eps}(\theta_{\eps}^{t})\circ(\mathrm{Id} + F))(x)h
    = \int_{\bbR^{2}}D(\nabla^{\perp}K_{\eps})(x + F(x) - y)(h + DF(x)h)
    \,d\theta_{\eps}^{t}(y).
\end{align*}
Therefore
$F\mapsto u_{\eps}(\theta_{\eps}^{t})\circ(\mathrm{Id} + F)$
is locally Lipschitz on $BC^{1}(\bbR^{2};\bbR^{2})$, so
\eqref{2.2} is locally well-posed there.
But since for any $F\in BC^{1}(\bbR^{2};\bbR^{2})$ we have
\[
    \norm{D(u_{\eps}(\theta_{\eps}^{t})\circ(\mathrm{Id} + F))}_{L^{\infty}}
    \leq \norm{D(\nabla^{\perp}K_{\eps})}_{L^{\infty}}\norm{\theta^{0}}_{L^{1}}
    \norm{\mathrm{Id} + DF}_{L^{\infty}},
\]
a Gr\"{o}nwall-type argument shows that the $C^{1}$ norm of any solution to \eqref{2.2}
can grow no faster than exponentially.  Therefore \eqref{2.2} is even globally well-posed in
$BC^{1}(\bbR^{2};\bbR^{2})$, and so $\Gamma_{\eps}^{t_{0},t}\in BC^{1}(\bbR^{2};\bbR^{2})$
for all $t\in\bbR$.  This and $\nabla \cdot u_{\eps}(\theta_{\eps}^{t}) \equiv 0$
now show that the map $\mathrm{Id} + \Gamma_{\eps}^{t_{0},t}$ is measure-preserving.
Then $\theta_{\eps}^{t}=\theta^{0}\circ(\Phi_{\eps}^{t})^{-1}\in L^1(\bbR^2)\cap L^\infty(\bbR^2)$
and  $\Phi_{\eps*}^{t}\Theta$ is its generalized layer cake representation.

Similarly, with $BC^{2}(\bbR^{2};\bbR^{2})$ the space of
bounded $C^{2}$ functions from $\bbR^{2}$ to $\bbR^{2}$ with bounded first and second derivatives,
for each $F\in BC^{2}(\bbR^{2};\bbR^{2})$ and $x,h_{1},h_{2}\in\bbR^{2}$ we have
\begin{align*}
    &D^{2}(u_{\eps}(\theta_{\eps}^{t})\circ(\mathrm{Id} + F))(x)(h_{1},h_{2})
    \\&\quad\quad = \int_{\bbR^{2}}D^{2}(\nabla^{\perp}K_{\eps})
    (x + F(x) - y)(h_{1} + DF(x)h_{1}, h_{2} + DF(x)h_{2})\,\theta_{\eps}^{t}(y)\,dy
    \\&\quad\quad\quad\quad\quad
    + \int_{\bbR^{2}}D(\nabla^{\perp}K_{\eps})(x + F(x) - y)
    \left(D^{2}F(x)(h_{1}, h_{2})\right)\theta_{\eps}^{t}(y)\,dy
\end{align*}
and
\begin{align*}
    \norm{D^{2}(u_{\eps}(\theta_{\eps}^{t})\circ(\mathrm{Id} + F))}_{L^{\infty}}
    &\leq \norm{D^{2}(\nabla^{\perp}K_{\eps})}_{L^{\infty}} \norm{\theta^{0}}_{L^{1}}
    \norm{\mathrm{Id} + DF}_{L^{\infty}}^{2}
    \\&\quad\quad\quad
    + \norm{D(\nabla^{\perp}K_{\eps})}_{L^{\infty}} \norm{\theta^{0}}_{L^{1}}
    \norm{D^{2}F}_{L^{\infty}}.
\end{align*}
Another Gr\"{o}nwall-type argument and the time-exponential bound on the $C^{1}$ norms of solutions to \eqref{2.2} 
now shows that \eqref{2.2} is globally well-posed in $BC^{2}(\bbR^{2};\bbR^{2})$, which we will use  in Section~\ref{S3}.
One can continue and inductively show that
\eqref{2.2} is globally well-posed in $BC^{k}(\bbR^{2};\bbR^{2})$ for all $k\in\bbN$ (then each $\Phi_{\eps}^{t}$ is a diffeomorphism), but we will not need this here.

Next we derive an $\eps$-independent estimate on the growth of
$L_{\mu}(\Phi_{\eps*}^{t}\Theta)$.

\begin{lemma}\label{L2.5}
    $\norm{D^{k}(u_{\eps}(\theta_{\eps}^{t}))}_{L^{\infty}}$ is continuous in $t$
    for all $k\in\bbZ_{\geq 0}$, and
    \begin{align}
        &\abs{\Gamma_{\eps}^{t_{0},t_{1}}(x) - \Gamma_{\eps}^{t_{0},t_{1}}(y)
        - \Gamma_{\eps}^{t_{0},t_{2}}(x) + \Gamma_{\eps}^{t_{0},t_{2}}(y)}  \notag
        \\&\qquad\qquad
        \leq \left(\exp\left(\abs{
            \int_{t_{2}}^{t_{1}}\norm{u_{\eps}(\theta_{\eps}^{\tau})}_{\dot{C}^{0,1}}d\tau
        }\right) - 1\right)
        \abs{x + \Gamma_{\eps}^{t_{0},t_{2}}(x) - y - \Gamma_{\eps}^{t_{0},t_{2}}(y)}  \lb{2.101}
    \end{align}
    holds for all $x,y\in\bbR^{2}$ and $t_{0},t_{1},t_{2}\in\bbR$.
\end{lemma}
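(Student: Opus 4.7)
The plan is to establish the two assertions independently. For the continuity claim, I would use that $\Phi_{\eps}^{t}$ is measure-preserving to write
\begin{equation*}
    D^{k}(u_{\eps}(\theta_{\eps}^{t}))(x) = \int_{\bbR^{2}} D^{k}(\nabla^{\perp}K_{\eps})(x - \Phi_{\eps}^{t}(y))\,\theta^{0}(y)\,dy,
\end{equation*}
and then combine the boundedness of $D^{k+1}(\nabla^{\perp}K_{\eps})$ on $\bbR^{2}$ (noted at the start of this section) with the mean value theorem to obtain
\begin{equation*}
    \norm{D^{k}(u_{\eps}(\theta_{\eps}^{t})) - D^{k}(u_{\eps}(\theta_{\eps}^{t_{*}}))}_{L^{\infty}}
    \le \norm{D^{k+1}(\nabla^{\perp}K_{\eps})}_{L^{\infty}}\norm{\theta^{0}}_{L^{1}}\,\norm{\Psi_{\eps}^{t} - \Psi_{\eps}^{t_{*}}}_{L^{\infty}}.
\end{equation*}
The right-hand side tends to $0$ as $t\to t_{*}$ because $\Psi_{\eps}^{t}$ is continuous in $t$ as a $BC(\bbR^{2};\bbR^{2})$-valued map, being the unique solution of \eqref{2.1} with a (globally) Lipschitz vector field by Lemma~\ref{L2.4}. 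The reverse triangle inequality then yields continuity of $\norm{D^{k}(u_{\eps}(\theta_{\eps}^{t}))}_{L^{\infty}}$ in $t$.

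For estimate \eqref{2.101}, the crux is the semigroup identity
\begin{equation*}
    \mathrm{Id} + \Gamma_{\eps}^{t_{0},t_{1}} = (\mathrm{Id} + \Gamma_{\eps}^{t_{2},t_{1}})\circ(\mathrm{Id} + \Gamma_{\eps}^{t_{0},t_{2}})
\end{equation*}
already derived in the excerpt from uniqueness for \eqref{2.2}. Setting $z:=x+\Gamma_{\eps}^{t_{0},t_{2}}(x)$ and $w:=y+\Gamma_{\eps}^{t_{0},t_{2}}(y)$, so that $\abs{z-w}$ is precisely the right-most factor in \eqref{2.101}, this identity yields $\Gamma_{\eps}^{t_{0},t_{1}}(x) - \Gamma_{\eps}^{t_{0},t_{2}}(x) = \Gamma_{\eps}^{t_{2},t_{1}}(z)$ and the analogous relation with $(y,w)$ in place of $(x,z)$. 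Hence the left-hand side of \eqref{2.101} equals $\abs{\Gamma_{\eps}^{t_{2},t_{1}}(z) - \Gamma_{\eps}^{t_{2},t_{1}}(w)}$, which reduces the problem to bounding the displacement of a single flow $\Gamma_{\eps}^{t_{2},\cdot}$ that starts at zero when $t=t_{2}$.

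It then remains to control $a(t):=\Gamma_{\eps}^{t_{2},t}(z) - \Gamma_{\eps}^{t_{2},t}(w)$. The ODE \eqref{2.2} together with $\Gamma_{\eps}^{t_{2},t_{2}}\equiv 0$ produce the integral form
\begin{equation*}
    a(t) = \int_{t_{2}}^{t}\bigl[u_{\eps}(\theta_{\eps}^{\tau})(z + \Gamma_{\eps}^{t_{2},\tau}(z)) - u_{\eps}(\theta_{\eps}^{\tau})(w + \Gamma_{\eps}^{t_{2},\tau}(w))\bigr]d\tau,
\end{equation*}
so Lipschitzness of $u_{\eps}(\theta_{\eps}^{\tau})$ (with a finite norm thanks to Lemma~\ref{L2.1} and \eqref{1.8}) gives
\begin{equation*}
    \abs{a(t)} \le \abs{\int_{t_{2}}^{t}\norm{u_{\eps}(\theta_{\eps}^{\tau})}_{\dot{C}^{0,1}}\bigl(\abs{z-w} + \abs{a(\tau)}\bigr)d\tau}.
\end{equation*}
A standard Gr\"{o}nwall argument in integral form then yields
\begin{equation*}
\abs{a(t_{1})} \le \abs{z-w}\left(\exp\left(\abs{\int_{t_{2}}^{t_{1}}\norm{u_{\eps}(\theta_{\eps}^{\tau})}_{\dot{C}^{0,1}}d\tau}\right) - 1\right)
\end{equation*}
irrespective of the sign of $t_{1}-t_{2}$, which is exactly \eqref{2.101}. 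The only real conceptual hurdle is recognizing the semigroup identity as the device that decouples the $(t_{0},t_{2})$ evolution from the $(t_{2},t_{1})$ evolution; after this reduction everything reduces to a routine Gr\"{o}nwall step.
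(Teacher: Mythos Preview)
Your proof is correct and follows essentially the same route as the paper: the continuity claim via the change-of-variables formula and the $D^{k+1}(\nabla^{\perp}K_{\eps})$ bound, and \eqref{2.101} via the semigroup identity reducing to a Gr\"{o}nwall estimate on $\Gamma_{\eps}^{t_{2},\cdot}$. The paper compresses the final step into the phrase ``follows from the definition of $\Gamma_{\eps}^{t_{2},t}$'', but your explicit Gr\"{o}nwall computation is exactly what that phrase encodes.
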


\begin{proof}
    Change of variables  yields
    \[
        \norm{D^{k}(u_{\eps}(\theta_{\eps}^{t_{1}}))
        - D^{k}(u_{\eps}(\theta_{\eps}^{t_{2}}))}_{L^{\infty}}
        \leq \norm{D^{k+1}(\nabla^{\perp}K_{\eps})}_{L^{\infty}}
        \norm{\theta^{0}}_{L^{1}}
        \norm{\Phi_{\eps}^{t_{1}} - \Phi_{\eps}^{t_{2}}}_{L^{\infty}}
    \]
    for any $(k,t_{1},t_{2})\in\bbZ_{\geq 0}\times\bbR^{2}$.  This shows
    the first claim, and in particular that $\norm{u_{\eps}(\theta_{\eps}^{t})}_{\dot{C}^{0,1}}$
    is continuous in $t$. 

    Next, letting $x':=x + \Gamma_{\eps}^{t_{0},t_{2}}(x)$, we see that
    \[
        \Gamma_{\eps}^{t_{0},t_{1}}(x) = x' + \Gamma_{\eps}^{t_{2},t_{1}}(x') - x
        = \Gamma_{\eps}^{t_{2},t_{1}}(x') + \Gamma_{\eps}^{t_{0},t_{2}}(x).
    \]
    So with $y':=y + \Gamma_{\eps}^{t_{0},t_{2}}(y)$, the left-hand side of \eqref{2.101}
    is just $|\Gamma_{\eps}^{t_{2},t_{1}}(x')-\Gamma_{\eps}^{t_{2},t_{1}}(y')|$,
    while the last factor is $|x'-y'|$. The result now follows from the definition of
    $\Gamma_{\eps}^{t_{2},t}$.
\end{proof}

Next, recall that the \emph{upper-right} and  \emph{lower-left Dini derivatives} of $f:(a,b)\to\bbR$ are
\[
    \partial_{t}^{+}f(t)\coloneqq \limsup_{h\to 0^{+}}\frac{f(t+h) - f(t)}{h} \qquad\text{and}\qquad
    \partial_{t-}f(t)\coloneqq \liminf_{h\to 0^{-}}\frac{f(t+h) - f(t)}{h}.
\]

\begin{proposition}\label{P2.6}
    $L_{\mu}(\Phi_{\eps*}^{t}\Theta)$ is continuous in $t$ and
    \[
        \max\set{
            \partial_{t}^{+}L_{\mu}(\Phi_{\eps*}^{t}\Theta),
            -\partial_{t-}L_{\mu}(\Phi_{\eps*}^{t}\Theta)
        }
        \leq \norm{u_{\eps}(\theta_{\eps}^{t})}_{\dot{C}^{0,1}}
        L_{\mu}(\Phi_{\eps*}^{t}\Theta).
    \]
\end{proposition}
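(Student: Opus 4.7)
The plan is to use Lemma~\ref{L2.5} together with \eqref{1.7} to derive a two-sided exponential comparison between $L_\mu(\Phi_{\eps*}^{t'}\Theta)$ and $L_\mu(\Phi_{\eps*}^{t}\Theta)$ at nearby times, and then extract both the continuity and the Dini derivative bounds from it.

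First I would specialize \eqref{2.101} to $t_0=0$ and use $\Phi_\eps^s = \mathrm{Id} + \Gamma_\eps^{0,s}$. The left-hand side of \eqref{2.101} rewrites as $|\Phi_\eps^{t'}(x) - \Phi_\eps^{t'}(y) - \Phi_\eps^{t}(x) + \Phi_\eps^{t}(y)|$ and the factor on the right as $|\Phi_\eps^{t}(x) - \Phi_\eps^{t}(y)|$, so the triangle inequality yields
\[
|\Phi_\eps^{t'}(x) - \Phi_\eps^{t'}(y)| \leq e^{C_{t,t'}}\, |\Phi_\eps^{t}(x) - \Phi_\eps^{t}(y)|,
\]
where $C_{t,t'}\coloneqq\big|\int_t^{t'}\|u_\eps(\theta_\eps^\tau)\|_{\dot{C}^{0,1}}\,d\tau\big|$. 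Substituting $a\coloneqq\Phi_\eps^{t}(x)$ and $b\coloneqq\Phi_\eps^{t}(y)$ then shows that $\Phi\coloneqq\Phi_\eps^{t'}\circ(\Phi_\eps^{t})^{-1}$, which is a measure-preserving homeomorphism, satisfies $\|\Phi\|_{\dot{C}^{0,1}} \leq e^{C_{t,t'}}$, and the same bound holds for $\Phi^{-1}$ by the analogous argument with $t$ and $t'$ swapped.

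Since $\Phi_{\eps*}^{t'}\Theta = \Phi_*(\Phi_{\eps*}^{t}\Theta)$, \eqref{1.7} then gives $L_\mu(\Phi_{\eps*}^{t'}\Theta) \leq e^{2\alpha C_{t,t'}} L_\mu(\Phi_{\eps*}^{t}\Theta)$, and swapping $t,t'$ yields the opposite inequality; together,
\[
\big|L_\mu(\Phi_{\eps*}^{t'}\Theta) - L_\mu(\Phi_{\eps*}^{t}\Theta)\big| \leq (e^{2\alpha C_{t,t'}} - 1)\, L_\mu(\Phi_{\eps*}^{t}\Theta).
\]
Continuity of $t\mapsto L_\mu(\Phi_{\eps*}^{t}\Theta)$ is immediate from $C_{t,t'}\to 0$ as $t'\to t$, which uses the continuity of $\tau\mapsto\|u_\eps(\theta_\eps^\tau)\|_{\dot{C}^{0,1}}$ supplied by Lemma~\ref{L2.5}.

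Finally, for the Dini derivative bounds, I would set $t'=t+h$ and divide by $|h|$ to obtain $|L_\mu(\Phi_{\eps*}^{t+h}\Theta) - L_\mu(\Phi_{\eps*}^{t}\Theta)|/|h| \leq ((e^{2\alpha C_{t,t+h}} - 1)/|h|)\, L_\mu(\Phi_{\eps*}^{t}\Theta)$. Since $(e^{2\alpha C_{t,t+h}} - 1)/|h| \to 2\alpha\|u_\eps(\theta_\eps^t)\|_{\dot{C}^{0,1}}$ as $h\to 0$, an elementary manipulation of the $\limsup$ and $\liminf$ based on $|a|\leq b \Leftrightarrow -b\leq a \leq b$ yields
\[
\max\set{\partial_t^+ L_\mu(\Phi_{\eps*}^{t}\Theta),\, -\partial_{t-} L_\mu(\Phi_{\eps*}^{t}\Theta)} \leq 2\alpha\|u_\eps(\theta_\eps^t)\|_{\dot{C}^{0,1}}\, L_\mu(\Phi_{\eps*}^{t}\Theta),
\]
which is even stronger than the claim since $2\alpha<1$. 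The main obstacle is just the bookkeeping in the bi-Lipschitz estimate and the sign analysis for the two one-sided limits; Lemma~\ref{L2.5} and \eqref{1.7} provide all of the analytical content.
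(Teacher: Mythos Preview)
Your argument is correct and in fact yields the sharper bound with constant $2\alpha\|u_\eps(\theta_\eps^t)\|_{\dot{C}^{0,1}}$ rather than $\|u_\eps(\theta_\eps^t)\|_{\dot{C}^{0,1}}$. The route differs somewhat from the paper's: rather than invoke \eqref{1.7} for the composed flow $\Phi_\eps^{t'}\circ(\Phi_\eps^{t})^{-1}$, the paper carries out a direct pointwise computation. For each $(x,\lambda)$ it chooses a near-minimizer $y\in\partial\Theta^\lambda$ of $|\Phi_\eps^{t_1}(x)-\Phi_\eps^{t_1}(y)|$, applies the elementary inequality $|a^{-2\alpha}-b^{-2\alpha}|\le |a-b|/(ab^{2\alpha})$, and combines it with Lemma~\ref{L2.5} to obtain the regularized estimate \eqref{2.4}; letting $\eta\to 0$, integrating in $\lambda$, and taking the sup over $x$ then gives
\[
L_\mu(\Phi_{\eps*}^{t_1}\Theta)\le \exp\!\left(\Big|\int_{t_2}^{t_1}\|u_\eps(\theta_\eps^\tau)\|_{\dot{C}^{0,1}}\,d\tau\Big|\right) L_\mu(\Phi_{\eps*}^{t_2}\Theta).
\]
Your approach is cleaner and loses nothing here, since \eqref{1.7} already packages the relevant distance comparison. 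What the paper's hands-on version buys is the explicit pointwise template \eqref{2.4}, which it later reuses (see the proofs of Proposition~\ref{P3.3} and Lemma~\ref{L4.2}) in situations where one needs to compare not $L_\mu$ itself but related integrands $(\Delta(z^{t,\lambda},z^{t,\lambda'})+\eta)^{-2\alpha}$ and $(d(\Phi^t(x),\operatorname{im}(z^{t,\lambda}))+\eta)^{-2\alpha}$ at different times; there a bare Lipschitz bound via \eqref{1.7} would not suffice.
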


\begin{proof}
    Fix any $t_{1},t_{2}\in\bbR$, $x\in\bbR^{2}$, $\lambda\in\mathcal{L}$, and $\eta>0$.
    Pick $y\in\partial\Theta^{\lambda}$ such that
    \[
      \abs{\Phi_{\eps}^{t_{1}}(x) - \Phi_{\eps}^{t_{1}}(y)} \le   
      d(\Phi_{\eps}^{t_{1}}(x), \partial\Phi_{\eps}^{t_{1}}(\Theta^{\lambda})) + \eta.
    \]
    Then Lemma~\ref{L2.5} and
    the inequality $\abs{\frac{1}{a^{2\alpha}} - \frac{1}{b^{2\alpha}}}
    \leq \frac{\abs{a - b}}{ab^{2\alpha}}$ for $a,b>0$ show that
    \begin{equation}\label{2.4}
        \begin{aligned}
            &\frac{1}{\left(
                d(\Phi_{\eps}^{t_{1}}(x),
                \partial\Phi_{\eps}^{t_{1}}(\Theta^{\lambda})) + 2\eta
            \right)^{2\alpha}}
            - \frac{1}{\left(
                d(\Phi_{\eps}^{t_{2}}(x),
                \partial\Phi_{\eps}^{t_{2}}(\Theta^{\lambda})) + \eta
            \right)^{2\alpha}}
            \\&\quad\quad\quad\quad
            \leq \frac{1}{\left(
                \abs{\Phi_{\eps}^{t_{1}}(x) - \Phi_{\eps}^{t_{1}}(y)} + \eta
            \right)^{2\alpha}}
            - \frac{1}{\left(
                \abs{\Phi_{\eps}^{t_{2}}(x) - \Phi_{\eps}^{t_{2}}(y)} + \eta
            \right)^{2\alpha}}
            \\&\quad\quad\quad\quad
            \leq \frac{\abs{\Phi_{\eps}^{t_{1}}(x) - \Phi_{\eps}^{t_{1}}(y)
            - \Phi_{\eps}^{t_{2}}(x) + \Phi_{\eps}^{t_{2}}(y)}}
            {\left(\abs{\Phi_{\eps}^{t_{1}}(x) - \Phi_{\eps}^{t_{1}}(y)} + \eta\right)
            \left(\abs{\Phi_{\eps}^{t_{2}}(x) - \Phi_{\eps}^{t_{2}}(y)} + \eta\right)^{2\alpha}}
            \\&\quad\quad\quad\quad
            \leq \frac{\exp\left(\abs{
                \int_{t_{2}}^{t_{1}}
                \norm{u_{\eps}(\theta_{\eps}^{\tau})}_{\dot{C}^{0,1}}\,d\tau
            }\right) - 1}
            {(d(\Phi_{\eps}^{t_{2}}(x),
            \partial\Phi_{\eps}^{t_{2}}(\Theta^{\lambda})) + \eta)^{2\alpha}},
        \end{aligned}
    \end{equation}
    so letting $\eta\to 0^{+}$, integrating over $\lambda$,
    and then taking supremum over $x\in\bbR^2$ shows
    \[
        L_{\mu}(\Phi_{\eps*}^{t_{1}}\Theta)
        \leq \exp\left(\abs{
            \int_{t_{2}}^{t_{1}}
            \norm{u_{\eps}(\theta_{\eps}^{\tau})}_{\dot{C}^{0,1}}\,d\tau
        }\right)L_{\mu}(\Phi_{\eps*}^{t_{2}}\Theta).
    \]
    Since $t_{1},t_{2}\in\bbR$ were arbitrary, both claims follow from this.
\end{proof}

From Lemma~\ref{L2.1}, Proposition~\ref{P2.6}, and  a Gr\"{o}nwall-type argument we now obtain the following result.

\begin{corollary}\label{C2.7}
    With $C_{\alpha}$ from Lemma~\ref{L2.1}, for all $t\in\bbR$ we have
    \[
        \max\set{
            \partial_{t}^{+}L_{\mu}(\Phi_{\eps*}^{t}\Theta),
            -\partial_{t-}L_{\mu}(\Phi_{\eps*}^{t}\Theta)
        }
        \leq C_{\alpha}L_{\mu}(\Phi_{\eps*}^{t}\Theta)^{2}.
    \]
    In particular, for all  $t\in(- \frac{1}{C_{\alpha}L_{\mu}(\Theta)},
    \frac{1}{C_{\alpha}L_{\mu}(\Theta)})$ we have
    \[
        L_{\mu}(\Phi_{\eps*}^{t}\Theta)
        \leq \frac{L_{\mu}(\Theta)}{1 - C_{\alpha}L_{\mu}(\Theta)\abs{t}}.
    \]
\end{corollary}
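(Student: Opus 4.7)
The corollary is stated as a direct consequence of Lemma~\ref{L2.1}, Proposition~\ref{P2.6}, and a Gr\"onwall-type argument, so my plan simply chains these three ingredients. Since $\Phi_{\eps*}^t\Theta$ was identified as a generalized layer cake representation of $\theta_\eps^t$ in the discussion preceding Lemma~\ref{L2.5}, Lemma~\ref{L2.1} applied at time $t$ gives $\|u_\eps(\theta_\eps^t)\|_{\dot{C}^{0,1}} \le C_\alpha L_\mu(\Phi_{\eps*}^t\Theta)$, and substituting this into the right-hand side of the conclusion of Proposition~\ref{P2.6} immediately produces the first displayed inequality of the corollary (with the same constant $C_\alpha$).

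For the explicit bound, I would set $f(t) := L_\mu(\Phi_{\eps*}^t\Theta)$, which is continuous by Proposition~\ref{P2.6} and nonnegative by definition. If $f(0) = 0$ then the first inequality together with nonnegativity forces $f \equiv 0$, so the bound is trivial; hence assume $f(0) > 0$. On the connected neighborhood of $0$ where $f > 0$, define $G(t) := 1/f(0) - 1/f(t)$. A short calculation, using continuity and positivity of $f$ to pass $\limsup$ through the reciprocal (via the elementary fact that $\limsup_{h\to 0^+} a_h b_h = b_0 \limsup_{h\to 0^+} a_h$ whenever $b_h \to b_0 > 0$), yields
\[
\partial_t^+ G(t) = \frac{\partial_t^+ f(t)}{f(t)^2} \le C_\alpha.
\]
Since $G$ is continuous with $G(0) = 0$ and upper-right Dini derivative uniformly bounded above by $C_\alpha$, the classical first-crossing comparison — apply the bound to $H(t) := G(t) - (C_\alpha + \eta)t$, observe that the strict inequality $\partial_t^+ H \le -\eta < 0$ prevents $H$ from crossing zero from below, then send $\eta \to 0^+$ — gives $G(t) \le C_\alpha t$, which rearranges to the claimed $f(t) \le f(0)/(1 - C_\alpha f(0) t)$ wherever $f > 0$. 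Should $f$ hit zero at some $T^* \in (0, 1/(C_\alpha f(0)))$, the same comparison starting from $T^* + \epsilon$ and sending $\epsilon \to 0^+$ forces $f \equiv 0$ on $[T^*, \infty)$, so the bound holds trivially past $T^*$. For $t < 0$ I would reverse time: $\tilde f(s) := f(-s)$ satisfies $\partial_s^+ \tilde f(s) = -\partial_{t-} f(-s) \le C_\alpha \tilde f(s)^2$ by the first part of the corollary, and the identical argument applies on $[0, 1/(C_\alpha f(0)))$.

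The only technical subtlety lies in the Dini-derivative comparison step, since $f$ need not be differentiable — but this is entirely classical and is precisely what is meant by ``Gr\"onwall-type argument'' in the corollary's statement, so no real conceptual obstacle arises.
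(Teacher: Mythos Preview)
Your proposal is correct and follows precisely the approach the paper indicates: combine Lemma~\ref{L2.1} (applied to the representation $(\Phi_{\eps*}^{t}\Theta,\mu)$ of $\theta_\eps^t$) with Proposition~\ref{P2.6} to obtain the Riccati-type Dini inequality, then integrate via a standard comparison argument. The paper itself gives no details beyond citing these ingredients and ``a Gr\"onwall-type argument,'' so your write-up is simply an explicit unpacking of that line, including the careful treatment of the case $f=0$ and the time-reversal for negative $t$.
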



\begin{proposition}\label{P2.8}
Let $T_{0}\coloneqq \frac{1}{2C_{\alpha}L_{\mu}(\Theta)}$, with $C_{\alpha}$ from Lemma~\ref{L2.1}.
    There is $\Psi:=\lim_{\eps\to 0} \Psi_{\eps}\in C\left([-T_{0},T_{0}];BC(\bbR^{2};\bbR^{2})\right)$,
    and $\Phi^{t}\coloneqq\mathrm{Id} + \Psi^{t}$
    is a measure-preserving homeomorphism for each $t\in[-T_{0},T_{0}]$ that solves \eqref{1.6}. Moreover, for each $t\in[-T_{0},T_{0}]$ we have 
    \[
        L_{\mu}(\Phi_{*}^{t}\Theta) \leq
        \sup_{\eps>0}L_{\mu}(\Phi_{\eps*}^{t}\Theta) \leq 2L_{\mu}(\Theta)
    \]
    and
    \[
        \max\set{\norm{\Phi^{t}}_{\dot{C}^{0,1}}, \norm{(\Phi^{t})^{-1}}_{\dot{C}^{0,1}}}
        \leq \sup_{\eps>0}\max\set{
            \norm{\Phi_{\eps}^{t}}_{\dot{C}^{0,1}},
            \norm{(\Phi_{\eps}^{t})^{-1}}_{\dot{C}^{0,1}}
        }
        \leq e^{2C_{\alpha}L_{\mu}(\Theta)\abs{t}}.
    \]
\end{proposition}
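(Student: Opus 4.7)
My approach is to pass to the limit $\eps\to 0^{+}$ in the regularized flows $\Phi_{\eps}^{t}$. First, Corollary~\ref{C2.7} gives the $\eps$-independent bound $L_{\mu}(\Phi_{\eps*}^{t}\Theta)\leq 2L_{\mu}(\Theta)$ on $[-T_{0},T_{0}]$ (since $C_{\alpha}L_{\mu}(\Theta)|t|\leq\tfrac{1}{2}$ there), which combined with Lemma~\ref{L2.1} yields $\|u_{\eps}(\theta_{\eps}^{t})\|_{\dot{C}^{0,1}}\leq 2C_{\alpha}L_{\mu}(\Theta)$ uniformly in $\eps$ and $t$. The Lipschitz bounds on $\Phi_{\eps}^{t}$ and $(\Phi_{\eps}^{t})^{-1}$ claimed in the proposition then follow from \eqref{2.101}: the first by choosing $t_{0}=t_{2}=0$ and $t_{1}=t$ (using $\Gamma_{\eps}^{0,t}=\Psi_{\eps}^{t}$), the second by choosing $t_{0}=t_{2}=t$ and $t_{1}=0$ (using $(\Phi_{\eps}^{t})^{-1}=\mathrm{Id}+\Gamma_{\eps}^{t,0}$).

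Next, I show that $\{\Psi_{\eps}\}_{\eps>0}$ is Cauchy in $C([-T_{0},T_{0}];BC(\bbR^{2};\bbR^{2}))$. I split $\partial_{t}(\Psi_{\eps_{1}}^{t}-\Psi_{\eps_{2}}^{t})$ as
\begin{align*}
&\bigl[u_{\eps_{1}}(\theta_{\eps_{1}}^{t})-u(\theta_{\eps_{1}}^{t})\bigr]\circ\Phi_{\eps_{1}}^{t}+\bigl[u(\theta_{\eps_{1}}^{t})-u(\theta_{\eps_{2}}^{t})\bigr]\circ\Phi_{\eps_{1}}^{t}\\
&\qquad+u(\theta_{\eps_{2}}^{t})\circ\Phi_{\eps_{1}}^{t}-u(\theta_{\eps_{2}}^{t})\circ\Phi_{\eps_{2}}^{t}+\bigl[u(\theta_{\eps_{2}}^{t})-u_{\eps_{2}}(\theta_{\eps_{2}}^{t})\bigr]\circ\Phi_{\eps_{2}}^{t},
\end{align*}
and bound the first and fourth terms by $C_{\alpha}\|\theta^{0}\|_{L^{\infty}}\max\{\eps_{1},\eps_{2}\}^{1-2\alpha}$ via Lemma~\ref{L2.3}, the second by $4C_{\alpha}L_{\mu}(\Theta)\|\Psi_{\eps_{1}}^{t}-\Psi_{\eps_{2}}^{t}\|_{L^{\infty}}$ via Lemma~\ref{L2.2} applied with $\theta=\theta^{0}$, $\Phi_{i}=\Phi_{\eps_{i}}^{t}$ and both representations equal to $(\Theta,\mu)$, and the third by $2C_{\alpha}L_{\mu}(\Theta)\|\Psi_{\eps_{1}}^{t}-\Psi_{\eps_{2}}^{t}\|_{L^{\infty}}$ via the Lipschitz estimate from Lemma~\ref{L2.1} for $u(\theta_{\eps_{2}}^{t})$. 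A Gronwall argument then yields the Cauchy property and produces $\Psi:=\lim_{\eps\to 0^{+}}\Psi_{\eps}$, with uniform convergence $\Phi_{\eps}^{t}\to\Phi^{t}:=\mathrm{Id}+\Psi^{t}$ on $[-T_{0},T_{0}]$.

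To complete the proof, I verify the remaining properties of $\Phi^{t}$. The uniform Lipschitz bounds pass to the limit, making $\Phi^{t}$ bi-Lipschitz and hence a homeomorphism, and then also yielding $(\Phi_{\eps}^{t})^{-1}\to(\Phi^{t})^{-1}$ uniformly by a standard composition argument. Measure-preservation follows by testing against $f\in C_{c}(\bbR^{2})$ and passing to the limit in $\int f\circ\Phi_{\eps}^{t}\,dx=\int f\,dx$. For the $L_{\mu}$ bound on $\Phi_{*}^{t}\Theta$, Hausdorff convergence of $\Phi_{\eps}^{t}(\partial\Theta^{\lambda})$ to $\Phi^{t}(\partial\Theta^{\lambda})$ (for each fixed $\lambda$) gives pointwise convergence of the distances in $\lambda$, so Fatou's lemma yields $L_{\mu}(\Phi_{*}^{t}\Theta)\leq\liminf_{\eps\to 0^{+}}L_{\mu}(\Phi_{\eps*}^{t}\Theta)\leq 2L_{\mu}(\Theta)$. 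Equation \eqref{1.6} is finally recovered by integrating $\partial_{t}\Phi_{\eps}^{t}=u_{\eps}(\theta_{\eps}^{t})\circ\Phi_{\eps}^{t}$ and letting $\eps\to 0^{+}$, where uniform convergence of the integrand to $u(\theta^{t})\circ\Phi^{t}$ is obtained from Lemmas~\ref{L2.2}--\ref{L2.3} together with Lipschitz continuity of $u(\theta^{t})$ from Lemma~\ref{L2.1} applied at the limit (using the just-established $L_{\mu}$ bound). The main obstacle is the Cauchy estimate, where one must carefully balance the smoothing error (Lemma~\ref{L2.3}) against the flow discrepancy (Lemma~\ref{L2.2}) to obtain a Gronwall-compatible inequality with a vanishing source term.
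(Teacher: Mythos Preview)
Your proposal is correct and follows essentially the same approach as the paper: the uniform $L_\mu$ bound from Corollary~\ref{C2.7}, the same four-term splitting via Lemmas~\ref{L2.1}--\ref{L2.3} for the Cauchy/Gr\"onwall estimate, Fatou's lemma for the limiting $L_\mu$ bound, and passage to the limit in the integrated equation to recover \eqref{1.6}. The only minor difference is that the paper obtains the homeomorphism property by additionally proving uniform convergence of the general flows $\Gamma_\eps^{t_0,t}$ (and hence of the inverses directly), whereas you deduce it from the bi-Lipschitz bounds together with $\Psi^t\in BC(\bbR^2;\bbR^2)$; both routes are standard.
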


\begin{proof}
    Corollary~\ref{C2.7} shows that
    \[
        M\coloneqq \sup_{\eps>0}\sup_{t\in[-T_{0},T_{0}]}L_{\mu}(\Phi_{\eps*}^{t}\Theta)
        \in [L_{\mu}(\Theta), 2L_{\mu}(\Theta)] .
    \]
    We may assume that $L_{\mu}(\Theta) > 0$ because otherwise $\theta^{0} \equiv 0$
    and the result follows trivially.
    Fix any $t_{0}\in[-T_{0},T_{0}]$ and pick any $t\in[-T_{0},T_{0}]$, $\eps>0$, and
    $\eps'\in(0,\eps)$.  Then Lemmas \ref{L2.1}, \ref{L2.2}, and \ref{L2.3} show that
    \begin{equation}\label{2.5}
        \begin{aligned}
            &\norm{u_{\eps}(\theta_{\eps}^{t})\circ
            (\operatorname{Id} + \Gamma_{\eps}^{t_{0},t})
            - u_{\eps'}(\theta_{\eps'}^{t})\circ
            (\operatorname{Id} + \Gamma_{\eps'}^{t_{0},t})}_{L^{\infty}}
            \\&\quad\quad\quad
            \leq \norm{u_{\eps}(\theta_{\eps}^{t})}_{\dot{C}^{0,1}}
            \norm{\Gamma_{\eps}^{t_{0},t} - \Gamma_{\eps'}^{t_{0},t}}_{L^{\infty}}
             + \norm{u_{\eps}(\theta_{\eps}^{t})
            - u(\theta_{\eps}^{t})}_{L^{\infty}}
            \\&\quad\quad\quad\quad\quad\quad\quad\quad\quad
           + \norm{u(\theta_{\eps}^{t})
            - u(\theta_{\eps'}^{t})}_{L^{\infty}}
            + \norm{u(\theta_{\eps'}^{t})
            - u_{\eps'}(\theta_{\eps'}^{t})}_{L^{\infty}}
            \\&\quad\quad\quad
            \leq C_{\alpha}M
            \norm{\Gamma_{\eps}^{t_{0},t} - \Gamma_{\eps'}^{t_{0},t}}_{L^{\infty}}
            + C_{\alpha}M
            \norm{\Phi_{\eps}^{t} - \Phi_{\eps'}^{t}}_{L^{\infty}}
            + C_{\alpha}\norm{\theta^{0}}_{L^{\infty}} \eps^{1-2\alpha}
        \end{aligned}
    \end{equation}
    where $C_{\alpha}$ (which we now fix for the rest of the proof)
    is two times the maximum of all the $C_{\alpha}$'s appearing in those lemmas.
    Integrating \eqref{2.5} between any $t_{1},t_{2}\in[-T_{0},T_{0}]$ yields
    \begin{equation}\label{2.6}
        \begin{aligned}
            &\norm{\Gamma_{\eps}^{t_{0},t_{1}} - \Gamma_{\eps'}^{t_{0},t_{1}}
            - \Gamma_{\eps}^{t_{0},t_{2}} + \Gamma_{\eps'}^{t_{0},t_{2}}}_{L^{\infty}}
            \\&\quad\quad
            \leq C_{\alpha}M\abs{\int_{t_{2}}^{t_{1}}
            \norm{\Gamma_{\eps}^{t_{0},\tau} - \Gamma_{\eps'}^{t_{0},\tau }}_{L^{\infty}}d\tau}
            + C_{\alpha}M\abs{\int_{t_{2}}^{t_{1}}
            \norm{\Phi_{\eps}^{\tau} - \Phi_{\eps'}^{\tau}}_{L^{\infty}}d\tau}
            \\&\quad\quad\quad\quad\quad\quad\quad\quad
            + C_{\alpha}\norm{\theta^{0}}_{L^{\infty}}\abs{t_{1} - t_{2}}\eps^{1-2\alpha}.
        \end{aligned}
    \end{equation}
    In particular, taking $t_{0} = 0$, dividing by $\abs{t_{1} - t_{2}}$,
    and letting $t_{1}\to t_{2}^{\pm}$ shows that
    \begin{align*}
        \max\set{
            \partial_{t}^{+}\norm{\Psi_{\eps}^{t} - \Psi_{\eps'}^{t}}_{L^{\infty}},
            -\partial_{t-}\norm{\Psi_{\eps}^{t} - \Psi_{\eps'}^{t}}_{L^{\infty}}
        }
        \leq 2C_{\alpha}M\norm{\Psi_{\eps}^{t} - \Psi_{\eps'}^{t}}_{L^{\infty}}
        + C_{\alpha}\norm{\theta^{0}}_{L^{\infty}}\eps^{1-2\alpha}
    \end{align*}
    for each $t\in[-T_{0},T_{0}]$, and then a Gr\"{o}nwall-type argument yields
    \[
        \norm{\Phi_{\eps}^{t} - \Phi_{\eps'}^{t}}_{L^{\infty}}
        = \norm{\Psi_{\eps}^{t} - \Psi_{\eps'}^{t}}_{L^{\infty}}
        \leq \frac{\norm{\theta^{0}}_{L^{\infty}}}{2M}
        (e^{2C_{\alpha}M\abs{t}} - 1)\eps^{1-2\alpha}.
    \]
    Applying this inequality to \eqref{2.6}, 
    dividing by $\abs{t_{1} - t_{2}}$ and then sending $t_{1}\to t_{2}^{\pm}$ shows that
    \begin{align*}
       \max & \left\{ \partial_{t}^{+}\norm{\Gamma_{\eps}^{t_{0},t} - \Gamma_{\eps'}^{t_{0},t}}_{L^{\infty}},
       -\partial_{t-}\norm{\Gamma_{\eps}^{t_{0},t} - \Gamma_{\eps'}^{t_{0},t}}_{L^{\infty}} \right\}
        \\ & \qquad\qquad\qquad\qquad\qquad \leq C_{\alpha}M\norm{\Gamma_{\eps}^{t_{0},t} - \Gamma_{\eps'}^{t_{0},t}}_{L^{\infty}}
        + C_{\alpha}\norm{\theta^{0}}_{L^{\infty}}
        e^{2C_{\alpha}M|t|}\eps^{1-2\alpha}
    \end{align*}
    for all $t\in[-T_0,T_0]$,
    so a Gr\"{o}nwall-type argument yields
    \begin{equation*}
        \norm{\Gamma_{\eps}^{t_{0},t} - \Gamma_{\eps'}^{t_{0},t}}_{L^{\infty}}
        \leq \frac{\norm{\theta^{0}}_{L^{\infty}}e^{2C_{\alpha}MT_{0}}\eps^{1-2\alpha}}{M}
        \left(e^{C_{\alpha}M\abs{t - t_{0}}} - 1\right).
    \end{equation*}
    Therefore, $\Gamma_{\eps}^{t_{0},\,\cdot\,}$ converges uniformly
    to some $\Gamma^{t_{0},\,\cdot\,}\colon [-T_{0},T_{0}]\to BC(\bbR^{2};\bbR^{2})$
    as $\eps\to 0$. 
    
    Let $\Psi^{t}\coloneqq\Gamma^{0,t}$.
    Since $(\mathrm{Id} + \Gamma_{\eps}^{t_{0},t_{1}})
    \circ(\mathrm{Id}+\Gamma_{\eps}^{t_{1},t_{0}}) = \mathrm{Id}$  for
    all $t_{0},t_{1}\in[-T_{0},T_{0}]$ and $\eps>0$, sending $\eps\to 0$ shows that
    $(\mathrm{Id} + \Gamma^{t_{0},t_{1}})
    \circ(\mathrm{Id} + \Gamma^{t_{1},t_{0}}) = \mathrm{Id}$.  In particular,
    $\Phi^{t}\coloneqq \mathrm{Id} + \Psi^{t}$ is a homeomorphism whose inverse is
    $\mathrm{Id} + \Gamma^{t,0}$.
    Also, Lemma~\ref{L2.5} and the definition of $C_{\alpha}$ show that
    $\norm{\mathrm{Id} + \Gamma_{\eps}^{t_{0},t}}_{\dot{C}^{0,1}}
    \leq e^{C_{\alpha}M\abs{t - t_{0}}}$ for all $t_{0},t\in[-T_{0},T_{0}]$ and $\eps>0$, thus
    $\max\set{\norm{\Phi^{t}}_{\dot{C}^{0,1}}, \norm{(\Phi^{t})^{-1}}_{\dot{C}^{0,1}}}
    \leq e^{C_{\alpha}M\abs{t}}$ holds for all $t\in[-T_{0},T_{0}]$.
    By Fatou's lemma we also have $L_{\mu}(\Phi_{*}^{t}\Theta)
    \leq \liminf_{\eps\to 0}L_{\mu}(\Phi_{\eps*}^{t}\Theta) \leq M$ for each $t\in[-T_{0},T_{0}]$.
   And since each $\Phi_{\eps}^{t}$ is measure-preserving,
    their uniform limit $\Phi^{t}$ is also such because for any open set $U\subseteq\bbR^{2}$ we have that
    $\mathbbm{1}_{U}\circ\Phi_{\eps}^{t}\to \mathbbm{1}_{U}\circ\Phi^{t}$ pointwise as $\eps\to 0$ .

    It remains to show that $\Phi^{t}$ satisfies \eqref{1.6}, that is,
    with $\theta^{t}\coloneqq\theta^{0}\circ (\Phi^{t})^{-1}$ we have
    \begin{equation}\label{2.7}
        \Phi^{t} = \mathrm{Id} + \int_{0}^{t}
        u(\theta^{\tau})\circ\Phi^{\tau}\,d\tau
    \end{equation}
    for each $t\in[-T_{0},T_{0}]$.
    Taking $t_{0} = 0$ and letting $\eps'\to 0^{+}$ in \eqref{2.5} yields
    \begin{align*}
        \norm{u_{\eps}(\theta_{\eps}^{t})\circ \Phi_{\eps}^{t}
        - u(\theta^{t})\circ \Phi^{t}}_{L^{\infty}}
        \leq 2C_{\alpha}M\norm{\Phi_{\eps}^{t} - \Phi^{t}}_{L^{\infty}}
        + C_{\alpha}\norm{\theta^{0}}_{L^{\infty}}\eps^{1-2\alpha},
    \end{align*}
    which shows that the right-hand side of
    \[
        \Phi_{\eps}^{t} = \mathrm{Id} + \int_{0}^{t}
        u(\theta_{\eps}^{\tau})\circ \Phi_{\eps}^{\tau}\,d\tau
    \]
    converges uniformly to the right-hand side of \eqref{2.7}
    as $\eps\to 0$. This now proves \eqref{1.6}.
\end{proof}

\begin{proposition}\label{P2.9}
    Let $(\Theta_{1},\mu_{1})$, $(\Theta_{2},\mu_{2})$ be
    generalized layer cake representations of $\theta^{0}$ and
    $\Phi_{1},\Phi_{2}\in C\left(I;C(\bbR^{2};\bbR^{2})\right)$ be solutions to \eqref{1.6}
    on a compact interval $I\ni 0$ that are  both measure-preserving homeomorphisms and
    $\sup_{t\in I}\max_{i\in\{1,2\}}L_{\mu_{i}}(\Phi_{i*}^{t}\Theta_{i}) <\infty$.
    Then $\Phi_{1} = \Phi_{2}$.
\end{proposition}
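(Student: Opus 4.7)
The plan is to derive a Gr\"{o}nwall estimate on the difference $\delta(t) := \|\Phi_1^t - \Phi_2^t\|_{L^\infty}$, which vanishes at $t=0$ and must therefore vanish throughout $I$, forcing $\Phi_1 = \Phi_2$. Write $M := \sup_{t \in I}\max_{i\in\{1,2\}} L_{\mu_i}(\Phi_{i*}^t \Theta_i) < \infty$ (by hypothesis). Also note that $\delta$ is finite on $I$: since $\Phi_i \in C(I; C(\bbR^2;\bbR^2))$ is continuous in the $L^\infty$ metric and $\Phi_i^0 = \mathrm{Id}$, compactness of $I$ makes $t \mapsto \Phi_i^t - \mathrm{Id}$ uniformly bounded in $L^\infty$.

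The key observation is that, since each $\Phi_i^\tau$ is a measure-preserving homeomorphism, the remark preceding Definition~\ref{D.11.1} shows that $(\Phi_{i*}^\tau\Theta_i,\mu_i)$ is a generalized layer cake representation of $\theta_i^\tau := \Phi_{i*}^\tau\theta^0$. Subtracting the integral forms of \eqref{1.6} for $i=1,2$ yields
\[
    \Phi_1^t - \Phi_2^t = \int_0^t \big[u(\theta_1^\tau)\circ\Phi_1^\tau - u(\theta_2^\tau)\circ\Phi_2^\tau\big]\,d\tau,
\]
and inserting $u(\theta_1^\tau)\circ\Phi_2^\tau$ yields the triangle-inequality bound
\[
    \|u(\theta_1^\tau)\circ\Phi_1^\tau - u(\theta_2^\tau)\circ\Phi_2^\tau\|_{L^\infty} \leq \|u(\theta_1^\tau)\|_{\dot{C}^{0,1}}\,\delta(\tau) + \|u(\theta_1^\tau) - u(\theta_2^\tau)\|_{L^\infty}.
\]

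For the first term I would apply Lemma~\ref{L2.1} with the representation $(\Phi_{1*}^\tau\Theta_1,\mu_1)$ of $\theta_1^\tau$ to obtain $\|u(\theta_1^\tau)\|_{\dot{C}^{0,1}} \leq C_\alpha L_{\mu_1}(\Phi_{1*}^\tau\Theta_1) \leq C_\alpha M$. For the second I would apply Lemma~\ref{L2.2} with $\theta := \theta^0$, generalized layer cake representations $(\Theta_i,\mu_i)$, and measure-preserving homeomorphisms $\Phi_i^\tau$ to obtain $\|u(\theta_1^\tau) - u(\theta_2^\tau)\|_{L^\infty} \leq C_\alpha(L_{\mu_1}(\Phi_{1*}^\tau\Theta_1) + L_{\mu_2}(\Phi_{2*}^\tau\Theta_2))\delta(\tau) \leq 2C_\alpha M\,\delta(\tau)$. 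Combining gives $\delta(t) \leq 3C_\alpha M\left|\int_0^t \delta(\tau)\,d\tau\right|$ for all $t \in I$, and since $\delta(0)=0$ and $\delta$ is bounded on $I$, Gr\"{o}nwall's inequality forces $\delta \equiv 0$, i.e., $\Phi_1 = \Phi_2$.

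There is no real obstacle because Lemmas~\ref{L2.1} and~\ref{L2.2} were crafted exactly for this argument. The only subtlety to watch is that Lemma~\ref{L2.2} compares two velocities generated by the same underlying function $\theta^0$ under possibly distinct generalized layer cake representations and distinct measure-preserving homeomorphisms — which is precisely our situation — and produces a bound linear in $\|\Phi_1^\tau - \Phi_2^\tau\|_{L^\infty}$, exactly what drives the Gr\"{o}nwall iteration.
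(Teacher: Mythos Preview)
Your proof is correct and follows essentially the same approach as the paper: both combine the Lipschitz bound from Lemma~\ref{L2.1} with the stability estimate from Lemma~\ref{L2.2} to obtain a linear-in-$\delta$ bound on the right-hand side of the flow equation, then close with Gr\"{o}nwall. The only cosmetic difference is that the paper phrases the final step via Dini derivatives of $\|\Phi_1^t-\Phi_2^t\|_{L^\infty}$ rather than the integral inequality you wrote, and it absorbs the factor $3$ into $C_\alpha$.
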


\begin{proof}
    Let
    \[
        M \coloneqq \sup_{t\in I}\max_{i=1,2}L_{\mu_{i}}(\Phi_{i*}^{t}\Theta_{i}).
    \]
    Then Lemmas \ref{L2.1} and \ref{L2.2} show that
    \begin{align*}
        \norm{u(\Phi_{1*}^{t}\theta^{0})\circ \Phi_{1}^{t}
        - u(\Phi_{2*}^{t}\theta^{0})\circ \Phi_{2}^{t}}_{L^{\infty}}
        &\leq \norm{u(\Phi_{1*}^{t}\theta^{0})}_{\dot{C}^{0,1}}
        \norm{\Phi_{1}^{t} - \Phi_{2}^{t}}_{L^{\infty}}
        + \norm{u(\Phi_{1*}^{t}\theta^{0}) - u(\Phi_{2*}^{t}\theta^{0})}_{L^{\infty}} \\
        &\leq C_{\alpha}M\norm{\Phi_{1}^{t} - \Phi_{2}^{t}}_{L^{\infty}}
    \end{align*}
    with some $C_{\alpha}$, which together with continuity of
    $\norm{\Phi_{1}^{t}-\Phi_{2}^{t}}_{L^{\infty}}$ in $t$ yields
    \[
        \max\set{
            \partial_{t}^{+}\norm{\Phi_{1}^{t} - \Phi_{2}^{t}}_{L^{\infty}},
            -\partial_{t-}\norm{\Phi_{1}^{t} - \Phi_{2}^{t}}_{L^{\infty}}
        }
        \leq C_{\alpha}M\norm{\Phi_{1}^{t} - \Phi_{2}^{t}}_{L^{\infty}}.
    \]
  A Gr\"{o}nwall-type argument  finishes the proof.
\end{proof}

Combining Propositions \ref{P2.8} and \ref{P2.9} with \eqref{1.7}, the latter showing that the time spans of maximal solutions for
any two generalized layer cake representations of $\theta^{0}$ must coincide (recall that
$\sup_{t\in J}\norm{(\Phi^{t})^{-1}}_{\dot{C}^{0,1}} < \infty$ for any compact interval $J\subseteq I$),
now yields  Theorem~\ref{T1.2}.

\section{Proof of Theorem~\ref{T1.4}(i)--(ii)}\label{S3}

Again, all constants $C_{\alpha}$ below
can change from one inequality to another, but they always only depend  on $\alpha$.
Fix any $\theta^{0}$ satisfying the hypotheses and
for each $\lambda\in\mathcal{L}$, let $z^{0,\lambda}\in\operatorname{PSC}(\bbR^{2})$ be
such that $\partial\Theta^{\lambda} = \operatorname{im}(z^{0,\lambda})$.

Let $\theta\colon I\to L^{1}(\bbR^{2})\cap L^{\infty}(\bbR^{2})$ be the
Lagrangian solution to \eqref{1.1}--\eqref{1.2} from Theorem~\ref{T1.2}, with  initial data $\theta^{0}$
and  flow map $\Phi\in C\left(I;C(\bbR^{2};\bbR^{2})\right)$, and consider $T_{0}\coloneqq \frac{1}{2C_{\alpha}L_{\mu}(\Theta)}$ as in Proposition \ref{P2.8} (note that $[-T_0,T_0]\subseteq I$ because $I$ is maximal). Then since  $\Phi^{t}$ is a homeomorphism for each $t\in I$,  it follows that $\Phi_{*}^{t}\Theta$ is  composed of simple closed curves (this proves the first claim in Theorem \ref{T1.4}(i)). We denote these  $z^{t,\lambda}:=\Phi^{t}\circ z^{0,\lambda} \in\operatorname{PSC}(\bbR^{2})$, where $\Phi^{t}\circ z^{0,\lambda}\in \operatorname{CC}(\bbR^{2})$ is the curve whose representative is $\Phi^{t}\circ\tilde{z}^{0,\lambda}$ whenever  $\tilde{z}^{0,\lambda}$ is a representative of $z^{0,\lambda}$ (since $\set{z^{t,\lambda}}_{t\in I}$ is clearly a connected subset of $\operatorname{CC}(\bbR^{2})$,  \cite[Lemma~B.4]{JeoZla2} shows that each $z^{t,\lambda}$ is positively oriented).

Fix any $\eps>0$, and recall that $\Phi_{\eps}^{t}\coloneqq\mathrm{Id} + \Psi_{\eps}^{t}$,
where $\Psi_{\eps}^{t}$ is the solution to \eqref{2.1}.
For each $(t,\lambda)\in\bbR\times\mathcal{L}$ let $z_{\eps}^{t,\lambda} \coloneqq \Phi_{\eps}^{t}\circ z^{0,\lambda}$
and $\theta_{\eps}^{t}\coloneqq\theta^{0}\circ(\Phi_{\eps}^{t})^{-1}$, then fix any arclength parametrization of $z_{\eps}^{t,\lambda}$ (we denote it again $z_{\eps}^{t,\lambda}(\cdot)$) and for $s\in[0,\ell(z_{\eps}^{t,\lambda})]$ define\begin{itemize}
    \item $\ell_{\eps}^{t,\lambda}\coloneqq \ell(z_{\eps}^{t,\lambda})$,

    \item $\mathbf{T}_{\eps}^{t,\lambda}(s) \coloneqq \partial_{s}z_{\eps}^{t,\lambda}(s)$,

    \item $\mathbf{N}_{\eps}^{t,\lambda}(s) \coloneqq \mathbf{T}^{t,\lambda}(s)^{\perp}$,

    \item $\kappa_{\eps}^{t,\lambda}(s) \coloneqq
    \partial_{s}^{2}z_{\eps}^{t,\lambda}(s)\cdot \mathbf{N}^{t,\lambda}(s)$,
    
    \item $\Delta_{\eps}^{t,\lambda,\lambda'} \coloneqq
    \Delta(z_{\eps}^{t,\lambda}, z_{\eps}^{t,\lambda'})$,

    \item $u_{\eps}^{t,\lambda}(s) \coloneqq
    u_{\eps}(\theta_{\eps}^{t}; z_{\eps}^{t,\lambda}(s))$.
\end{itemize}
Proposition \ref{P2.8} shows that $\lim_{\eps\to 0} z_\eps^{t,\lambda}= z^{t,\lambda}$ in $\operatorname{CC}(\bbR^{2})$, and
as noted in \cite[Section~4]{JeoZla2}, 
\[
    \partial_{s}^{2}z_{\eps}^{t,\lambda}(s) = \partial_{s}\mathbf{T}_{\eps}^{t,\lambda}(s)
    = \kappa_{\eps}^{t,\lambda}(s)\mathbf{N}_{\eps}^{t,\lambda}(s)
    \qquad\textrm{and}\qquad
    \partial_{s}\mathbf{N}_{\eps}^{t,\lambda}(s)
    = -\kappa_{\eps}^{t,\lambda}(s)\mathbf{T}_{\eps}^{t,\lambda}(s)
\]
holds as well.
Then the argument in \cite[Lemma~4.1]{JeoZla2} also applies here, and we obtain
\begin{equation}\label{3.2}
    \begin{aligned}
        \partial_{t}\norm{z_{\eps}^{t,\lambda}}_{\dot{H}^{2}}^{2}
        &= -3\int_{\ell_{\eps}^{t,\lambda}\bbT}\kappa_{\eps}^{t,\lambda}(s)^{2}
        \left(\partial_{s}u_{\eps}^{t,\lambda}(s) \cdot \mathbf{T}_{\eps}^{t,\lambda}(s)\right)ds
        \\&\qquad\qquad        
        + 2\int_{\ell_{\eps}^{t,\lambda}\bbT}\kappa_{\eps}^{t,\lambda}(s)
        \left(\partial_{s}^{2}u_{\eps}^{t,\lambda}(s) \cdot \mathbf{N}_{\eps}^{t,\lambda}(s)\right)ds.
    \end{aligned}
\end{equation}

In the proof of this we fix
some {\it constant-speed parametrization}
$\tilde{z}_{\eps}^{t,\lambda}\colon\bbT\to\bbR^{2}$ of $z_{\eps}^{t,\lambda}$ (that is,  $\tilde\gamma(\ell(z_{\eps}^{t,\lambda})\,\cdot)$ for some arclength parametrization $\tilde\gamma$ of $z_{\eps}^{t,\lambda}$), and for each $h\in\bbR$
let $\tilde{z}_{\eps}^{t+h,\lambda}\coloneqq
\Phi_{\eps}^{t+h}\circ(\Phi_{\eps}^{t})^{-1}\circ\tilde{z}_{\eps}^{t,\lambda}$. Since \eqref{2.2} is globally well-posed
in $BC^{2}(\bbR^{2};\bbR^{2})$ (see the paragraph before Lemma~\ref{L2.5}),
it easily follows that $\tilde{z}_{\eps}^{t,\lambda}\in H^{2}(\bbT;\bbR^{2})$ and
\begin{equation}\label{3.1}
    \tilde{z}_{\eps}^{t+h,\lambda}
    = \tilde{z}_{\eps}^{t,\lambda} + \int_{t}^{t+h}u_{\eps}(\theta_{\eps}^{\tau})
    \circ \tilde{z}_{\eps}^{\tau,\lambda}\,d\tau
\end{equation}
holds for all $h\in\bbR$ (in $H^{2}(\bbT;\bbR^{2})$).

\medskip
\textit{Remark}. Below we will also consider the above setup with initial data being $(\theta^{t_0},z^{t_0})$ for some $t_0\in I$ instead of $(\theta^0,z^0)$.  Since we do not yet know whether the $z^{t_{0},\lambda}$ are $H^{2}$ curves when $t_0\neq 0$,  we cannot define $\kappa_{\eps}^{t,\lambda}$ for these initial data and also cannot yet claim \eqref{3.1} to hold in $H^{2}(\bbT;\bbR^{2})$.  However, since $\Phi^{t_{0}}$ is Lipschitz by Proposition \ref{P2.8}, so are the $z^{t_{0},\lambda}$ and then
\eqref{3.1}  holds in $C^{0,1}(\bbT;\bbR^{2})$.  We will use this in the following results, up to Lemma~\ref{L3.4}.

\begin{lemma}\label{L3.1}
 With $C_{\alpha}$ from Lemma~\ref{L2.1}, for any $(t,\lambda)\in [-T_0,T_0]\times\mathcal{L}$ and $\eps>0$ we have
     \begin{equation}\label{3.3}
        \abs{\partial_{t}\ell_{\eps}^{t,\lambda}}
        \leq 2C_{\alpha}L_{\mu}(\Theta)\ell_{\eps}^{t,\lambda}
    \end{equation}
\end{lemma}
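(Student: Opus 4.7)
The plan is to derive the standard length-evolution identity
\begin{equation*}
\partial_t \ell_\eps^{t,\lambda} = \int_{\ell_\eps^{t,\lambda}\bbT} \partial_s u_\eps^{t,\lambda}(s) \cdot \mathbf{T}_\eps^{t,\lambda}(s)\,ds,
\end{equation*}
matching the first tangential factor that appears in \eqref{3.2}, and then estimate its integrand pointwise via Lemma~\ref{L2.1} and the bound $L_\mu(\Phi_{\eps*}^t\Theta)\leq 2L_\mu(\Theta)$ supplied by Proposition~\ref{P2.8}.

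For the identity, I would write $\ell_\eps^{t,\lambda}=\int_\bbT |\partial_p \tilde z_\eps^{t,\lambda}(p)|\,dp$ using the constant-speed parametrization $\tilde z_\eps^{t,\lambda}$ fixed in the discussion preceding \eqref{3.1}. Because $K_\eps$ is smooth and \eqref{2.2} is globally well-posed in $BC^2(\bbR^2;\bbR^2)$ (as noted before Lemma~\ref{L2.5}), the map $h\mapsto \tilde z_\eps^{t+h,\lambda}$ is $C^1$ into $H^2(\bbT;\bbR^2)$ and \eqref{3.1} holds in that space. Differentiating under the integral at $h=0$ and using $\partial_h \tilde z_\eps^{t+h,\lambda}\big|_{h=0} = u_\eps(\theta_\eps^t)\circ\tilde z_\eps^{t,\lambda}$, the chain rule produces an integrand involving $\mathbf{T}_\eps^{t,\lambda}\cdot Du_\eps(\theta_\eps^t)\,\mathbf{T}_\eps^{t,\lambda}$; a change of variables from $p\in\bbT$ to the arclength $s=\ell_\eps^{t,\lambda}p$ (valid because the parametrization is constant-speed \emph{at time $t$}) then delivers the displayed formula, after invoking $\partial_s u_\eps^{t,\lambda}(s) = Du_\eps(\theta_\eps^t)(z_\eps^{t,\lambda}(s))\,\mathbf{T}_\eps^{t,\lambda}(s)$.

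From there the estimate is immediate: the integrand is bounded in absolute value by $\|u_\eps(\theta_\eps^t)\|_{\dot C^{0,1}}$, so $|\partial_t \ell_\eps^{t,\lambda}|\leq \|u_\eps(\theta_\eps^t)\|_{\dot C^{0,1}}\,\ell_\eps^{t,\lambda}$. Lemma~\ref{L2.1} bounds $\|u_\eps(\theta_\eps^t)\|_{\dot C^{0,1}}$ by $C_\alpha L_\mu(\Phi_{\eps*}^t\Theta)$, and Proposition~\ref{P2.8} controls the latter by $2L_\mu(\Theta)$ on $[-T_0,T_0]$; combining these yields \eqref{3.3}. There is no real obstacle here---the lemma is essentially a bookkeeping step ensuring the curves cannot shrink or stretch too fast, and every ingredient (the evolution formula, the velocity gradient bound, and the $L_\mu$ control) is already in place.
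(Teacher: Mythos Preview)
Your proposal is correct and follows essentially the same route as the paper: derive the length-evolution identity $\partial_t\ell_\eps^{t,\lambda}=\int_{\ell_\eps^{t,\lambda}\bbT}\partial_s u_\eps^{t,\lambda}\cdot\mathbf{T}_\eps^{t,\lambda}\,ds$ from the constant-speed parametrization and \eqref{3.1}, then bound the integrand via Lemma~\ref{L2.1} together with the estimate $L_\mu(\Phi_{\eps*}^t\Theta)\le 2L_\mu(\Theta)$ on $[-T_0,T_0]$. The only cosmetic differences are that the paper computes the difference quotient for $|\partial_\xi\tilde z_\eps^{t+h,\lambda}|-|\partial_\xi\tilde z_\eps^{t,\lambda}|$ explicitly rather than invoking differentiation under the integral, and cites Corollary~\ref{C2.7} directly rather than Proposition~\ref{P2.8} for the $L_\mu$ bound.
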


\begin{proof}
   With $\tilde{z}_{\eps}^{t,\lambda}$ as above, we have $\abs{\partial_{\xi}\tilde{z}_{\eps}^{t,\lambda}(\xi)}
    = \ell_{\eps}^{t,\lambda}> 0$ for any $(t,\lambda)\in\bbR\times\mathcal{L}$.  Then for any $(h,\xi)\in\bbR\times\bbT$ we get
    \begin{equation*}
        \begin{aligned}
            &\abs{\partial_{\xi}\tilde{z}_{\eps}^{t+h,\lambda}(\xi)}
            - \abs{\partial_{\xi}\tilde{z}_{\eps}^{t,\lambda}(\xi)}
            \\&\quad\quad\quad
            = \frac{2\int_{t}^{t+h} \frac d{d\xi} u_{\eps}(\theta_{\eps}^{\tau};
            \tilde{z}_{\eps}^{\tau,\lambda}(\xi))
            \cdot \partial_{\xi}\tilde{z}_{\eps}^{t,\lambda}(\xi)\,d\tau}
            {\abs{\partial_{\xi}\tilde{z}_{\eps}^{t+h,\lambda}(\xi)}
            + \abs{\partial_{\xi}\tilde{z}_{\eps}^{t,\lambda}(\xi)}}
            + \frac{\abs{\int_{t}^{t+h} \frac d{d\xi} u_{\eps}(\theta_{\eps}^{\tau};
            \tilde{z}_{\eps}^{\tau,\lambda}(\xi))\,d\tau}^{2}}
            {\abs{\partial_{\xi}\tilde{z}_{\eps}^{t+h,\lambda}(\xi)}
            + \abs{\partial_{\xi}\tilde{z}_{\eps}^{t,\lambda}(\xi)}}.
        \end{aligned}
    \end{equation*}
    Since $\norm{D(u_{\eps}(\theta_{\eps}^{\tau}))}_{L^{\infty}}$ is continuous in $\tau$
    by Lemma~\ref{L2.5}, integrating in $\xi$, dividing by $h$,
    and then letting $h\to 0$ shows that
    \[
        \partial_{t}\ell_{\eps}^{t,\lambda}
        = \int_{\bbT}
        \frac d{d\xi} u_{\eps}(\theta_{\eps}^{t};\tilde{z}_{\eps}^{t,\lambda}(\xi))
        \cdot \frac{\partial_{\xi}\tilde{z}_{\eps}^{t,\lambda}(\xi)}
        {\abs{\partial_{\xi}\tilde{z}_{\eps}^{t,\lambda}(\xi)}}\,d\xi         = \int_{\ell_{\eps}^{t,\lambda}\bbT}
        \partial_{s}u_{\eps}^{t,\lambda}(s)\cdot\mathbf{T}_{\eps}^{t,\lambda}(s)\,ds.
    \]
The result now follows from Lemma \ref{L2.1} and Corollary~\ref{C2.7}.
\end{proof}

\begin{lemma}\label{L3.2}
    With $C_{\alpha}$ from Lemma~\ref{L2.1}, for any $(t,\lambda)\in I\times\mathcal{L}$ and all small enough $h\in\bbR$ we have
    \[
        e^{-3C_{\alpha}L_{\mu}(\Phi_{*}^{t}\Theta)\abs{h}}\ell(z^{t,\lambda})
        \leq \ell(z^{t+h,\lambda}) \leq
        e^{3C_{\alpha}L_{\mu}(\Phi_{*}^{t}\Theta)\abs{h}}\ell(z^{t,\lambda}).
    \]
\end{lemma}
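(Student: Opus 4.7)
The plan is to prove Lemma~\ref{L3.2} by restarting the Lagrangian solution at time $t$, thereby reducing the claim to the quantitative Lipschitz bounds on the flow map already furnished by Proposition~\ref{P2.8}. Since $t\in I$, Theorem~\ref{T1.2} guarantees that $L_\mu(\Phi_*^t\Theta)<\infty$, so $(\Phi_*^t\Theta,\mu)$ is a valid generalized layer cake representation of $\theta^t$ satisfying the hypotheses of Proposition~\ref{P2.8} with $\theta^t$ playing the role of the initial datum at time zero. Applying that proposition to $\theta^t$ produces a flow map $\tilde\Phi^h$ (in the shifted time variable $h$) defined for $|h|\le T_0^{(t)}:=1/(2C_\alpha L_\mu(\Phi_*^t\Theta))$, together with the estimate
\[
\max\{\|\tilde\Phi^h\|_{\dot{C}^{0,1}},\,\|(\tilde\Phi^h)^{-1}\|_{\dot{C}^{0,1}}\}\ \le\ e^{2C_\alpha L_\mu(\Phi_*^t\Theta)|h|}.
\]

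Next, I would identify $\tilde\Phi^h$ with $\hat\Phi^h:=\Phi^{t+h}\circ(\Phi^t)^{-1}$. A direct chain-rule computation using $\partial_h\Phi^{t+h}=u(\theta^{t+h})\circ\Phi^{t+h}$ and $\hat\Phi_*^h\theta^t=\theta^{t+h}$ shows that $\hat\Phi^h$ is a measure-preserving homeomorphism solving $\partial_h\hat\Phi^h=u(\hat\Phi^h_*\theta^t)\circ\hat\Phi^h$ with $\hat\Phi^0=\mathrm{Id}$. Moreover, Theorem~\ref{T1.2} combined with \eqref{1.7} gives $\sup_{|h|\le\delta}L_\mu(\hat\Phi_*^h\Phi_*^t\Theta)<\infty$ for any sufficiently small $\delta>0$, so Proposition~\ref{P2.9} applies and forces $\tilde\Phi^h=\hat\Phi^h$ on their common interval of existence. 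In particular, for all $h$ small enough,
\[
z^{t+h,\lambda}=\Phi^{t+h}\circ z^{0,\lambda}=\hat\Phi^h\circ z^{t,\lambda}=\tilde\Phi^h\circ z^{t,\lambda}.
\]

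Combining these, and using the elementary inequality $\ell(F\circ\gamma)\le\|F\|_{\dot{C}^{0,1}}\,\ell(\gamma)$ for any Lipschitz $F\colon\bbR^2\to\bbR^2$ and rectifiable curve $\gamma$, I conclude
\[
\ell(z^{t+h,\lambda})=\ell(\tilde\Phi^h\circ z^{t,\lambda})\ \le\ \|\tilde\Phi^h\|_{\dot{C}^{0,1}}\,\ell(z^{t,\lambda})\ \le\ e^{2C_\alpha L_\mu(\Phi_*^t\Theta)|h|}\,\ell(z^{t,\lambda}),
\]
and the matching lower bound $\ell(z^{t,\lambda})\le e^{2C_\alpha L_\mu(\Phi_*^t\Theta)|h|}\ell(z^{t+h,\lambda})$ follows from the same inequality applied with $(\tilde\Phi^h)^{-1}$ and $z^{t+h,\lambda}$ in place of $\tilde\Phi^h$ and $z^{t,\lambda}$. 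Since the stated exponent $3C_\alpha L_\mu(\Phi_*^t\Theta)|h|$ is strictly larger than the $2C_\alpha L_\mu(\Phi_*^t\Theta)|h|$ this argument produces, the conclusion follows, with the extra factor providing safe slack that absorbs any redefinition of $C_\alpha$ between the various cited lemmas.

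The main obstacle is the clean identification $\tilde\Phi^h=\hat\Phi^h$ via the uniqueness statement of Proposition~\ref{P2.9}: although morally immediate, it requires checking that $\hat\Phi^h$ is indeed a measure-preserving homeomorphism solving the correct ODE with initial datum $\theta^t$, and that $\sup_{h\in J}L_\mu(\hat\Phi_*^h\Phi_*^t\Theta)<\infty$ holds on a compact sub-interval $J$ containing $0$, so that the uniqueness hypotheses are genuinely met. Everything else in the argument is routine.
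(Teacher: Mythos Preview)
Your argument is correct and in fact slightly sharper than the paper's, yielding the exponent $2C_\alpha L_\mu(\Phi_*^t\Theta)|h|$ rather than $3C_\alpha L_\mu(\Phi_*^t\Theta)|h|$. The identification $\tilde\Phi^h=\hat\Phi^h$ via Proposition~\ref{P2.9} goes through exactly as you describe, and the elementary bound $\ell(F\circ\gamma)\le\|F\|_{\dot C^{0,1}}\ell(\gamma)$ applied to both $\tilde\Phi^h$ and $(\tilde\Phi^h)^{-1}$ gives the two-sided estimate in one stroke.

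The paper takes a different route: it works with the $\eps$-mollified curves $z_\eps^{t,\lambda}$, uses the differential inequality $|\partial_t\ell_\eps^{t,\lambda}|\le 2C_\alpha L_\mu(\Theta)\ell_\eps^{t,\lambda}$ from Lemma~\ref{L3.1}, applies Gr\"onwall, and then passes to the limit $\eps\to 0$ via lower semi-continuity of $\ell$ on $\operatorname{CC}(\bbR^2)$. Because lower semi-continuity only gives one inequality at a time, the paper must restart at an intermediate time $t_0$ with $|t_0|\le T_0/2$ and use the bound $L_\mu(\Phi_*^{t_0}\Theta)\le\frac43 L_\mu(\Theta)$ from Corollary~\ref{C2.7} to recover the reverse inequality, which is where the constant~$3$ (rather than~$2$) enters. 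Your approach bypasses this bootstrap entirely by reading off the two-sided Lipschitz control on $\Phi^h$ and $(\Phi^h)^{-1}$ directly from Proposition~\ref{P2.8}; this is more economical and avoids any appeal to Lemma~\ref{L3.1} or to lower semi-continuity of the length functional. The paper's route, on the other hand, sets up machinery (the $\eps$-approximation viewpoint on curve lengths) that is reused verbatim in the proofs of Proposition~\ref{P3.3} and Lemma~\ref{L3.4}, so its extra effort here is not wasted.
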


\begin{proof}
   Since $\ell\colon\operatorname{CC}(\bbR^{2})\to[0,\infty]$ is lower semi-continuous
    (by definition) and $\lim_{\eps\to 0} z_\eps^{t,\lambda}= z^{t,\lambda}$ in $\operatorname{CC}(\bbR^{2})$, a Gr\"{o}nwall-type argument applied to \eqref{3.3} shows that for $\abs{t}\leq T_0$ we have
    \begin{equation}\label{3.4}
        \ell(z^{t,\lambda}) \leq e^{2C_{\alpha}L_{\mu}(\Theta)\abs{t}}\ell(z^{0,\lambda}).
    \end{equation}

    Next fix any $t_{0}\in I$ with $\abs{t_{0}}\leq \frac {T_0}2$.
    Repeating the proof of \eqref{3.4} with $z^{t_0,\lambda}$ in place of $z^{0,\lambda}$ (recall  the remark after \eqref{3.1}) shows that
    \[
        \ell(z^{t,\lambda}) \leq e^{2C_{\alpha}L_\mu(\Phi_{*}^{t_{0}}\Theta)\abs{t - t_{0}}}\ell(z^{t_0,\lambda})
    \]
    whenever $\abs{t - t_{0}}\leq\frac{1}{2C_{\alpha}L_{\mu}(\Phi_{*}^{t_{0}}\Theta)}$.
    Since $L_{\mu}(\Phi_{*}^{t_{0}}\Theta) \leq \frac{4}{3}L_{\mu}(\Theta)$ by Corollary~\ref{C2.7},
    we see that
    \[
        \ell(z^{0,\lambda}) \leq e^{3C_{\alpha}L_{\mu}(\Theta)\abs{t_{0}}}\ell(z^{t_{0},\lambda}),
    \]
and so
    \[
        e^{-3C_{\alpha}L_{\mu}(\Theta)\abs{t}}\ell(z^{0,\lambda})
        \leq \ell(z^{t,\lambda}) \leq
        e^{3C_{\alpha}L_{\mu}(\Theta)\abs{t}}\ell(z^{0,\lambda})
    \]
    holds whenever $\abs{t}\leq \frac {T_0}2$.
    Applying this with $(\theta^t,t,h)$ in place of $(\theta^0,0,t)$,
    for any $t\in I$, now shows the claim.
\end{proof}

\begin{proposition}\label{P3.3}
    $R_{\mu}(\Phi_{*}^{t}\Theta)$ is continuous in $t$.   In particular,
    for any compact interval $J\subseteq I$ we have
    \[
        \sup_{t\in J}R_{\mu}(\Phi_{*}^{t}\Theta) < \infty.
    \]
\end{proposition}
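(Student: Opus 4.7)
The plan is to control $R_\mu(\Phi_*^t\Theta)$ by direct comparison to $R_\mu(\Phi_*^{t_0}\Theta)$ through the Lipschitz constants of the flow map $\Phi^t\circ(\Phi^{t_0})^{-1}$. The starting observation is that for any measure-preserving Lipschitz homeomorphism $\Psi$ and any $\gamma,\gamma_1,\gamma_2\in\operatorname{CC}(\bbR^2)$,
\begin{align*}
    \|\Psi^{-1}\|_{\dot C^{0,1}}^{-1}\,\ell(\gamma)\ \le\ &\ell(\Psi\circ\gamma)\ \le\ \|\Psi\|_{\dot C^{0,1}}\,\ell(\gamma), \\
    \|\Psi^{-1}\|_{\dot C^{0,1}}^{-1}\,\Delta(\gamma_1,\gamma_2)\ \le\ &\Delta(\Psi\circ\gamma_1,\Psi\circ\gamma_2)\ \le\ \|\Psi\|_{\dot C^{0,1}}\,\Delta(\gamma_1,\gamma_2),
\end{align*}
which follow directly from the definitions of $\ell$ and $\Delta$ together with the Lipschitz bounds applied to $\Psi$ and to $\Psi^{-1}$. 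Applying these to $\Psi:=\Phi^t\circ(\Phi^{t_0})^{-1}$ (so that $z^{t,\lambda}=\Psi\circ z^{t_0,\lambda}$) and estimating term-by-term in the supremum-integral defining $R_\mu$ yields
\[
    R_\mu(\Phi_*^t\Theta)\ \le\ \|\Psi\|_{\dot C^{0,1}}^{1/2}\,\|\Psi^{-1}\|_{\dot C^{0,1}}^{1/2+2\alpha}\,R_\mu(\Phi_*^{t_0}\Theta),
\]
along with the symmetric reverse inequality obtained by swapping the roles of $t$ and $t_0$.

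For the uniform-boundedness claim I would take $t_0=0$ and invoke Theorem~\ref{T1.2}, which gives $\sup_{t\in J}\max\{\|\Phi^t\|_{\dot C^{0,1}},\|(\Phi^t)^{-1}\|_{\dot C^{0,1}}\}<\infty$ on any compact $J\subseteq I$; combined with the hypothesis $R_\mu(\Theta)<\infty$ this produces the desired uniform bound on $J$. For continuity at any $t_0\in I$, the key input is that by uniqueness in Theorem~\ref{T1.2}, the time-shifted map $t\mapsto\Phi^{t+t_0}\circ(\Phi^{t_0})^{-1}$ is the maximal flow for initial datum $\theta^{t_0}$. Thus Proposition~\ref{P2.8} applies in the shifted setting and produces
\[
    \max\Big\{\|\Phi^t\circ(\Phi^{t_0})^{-1}\|_{\dot C^{0,1}},\;\|(\Phi^t\circ(\Phi^{t_0})^{-1})^{-1}\|_{\dot C^{0,1}}\Big\}\ \le\ e^{C_\alpha L_\mu(\Phi_*^{t_0}\Theta)\,|t-t_0|},
\]
which tends to $1$ as $t\to t_0$. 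Inserting this into the two-sided comparison above forces $R_\mu(\Phi_*^t\Theta)\to R_\mu(\Phi_*^{t_0}\Theta)$.

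The only modest subtlety I anticipate is justifying the time-shift step so that Proposition~\ref{P2.8} may be applied at $t_0$ rather than at $0$; once this is in hand, the remainder is bookkeeping via the comparison estimates, in keeping with the spirit of the $L_\mu$-analysis in Corollary~\ref{C2.7}. Measurability of $\lambda'\mapsto \ell(z^{t,\lambda'})$ and $\lambda'\mapsto\Delta(z^{t,\lambda},z^{t,\lambda'})$, which is needed to make sense of $R_\mu(\Phi_*^t\Theta)$ in the first place, was already addressed in the Remark following the definition of $R_\mu$, and propagates from $t=0$ to general $t$ via the continuity of $\Phi^t$.
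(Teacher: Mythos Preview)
Your argument is correct and reaches the same conclusion as the paper, but by a somewhat more direct route. The paper re-derives the Lipschitz-type control on $\Phi^{t_1}\circ(\Phi^{t_2})^{-1}$ by passing through the $\eps$-approximations and Lemma~\ref{L2.5} to obtain \eqref{3.6}, then uses the elementary inequality $|a^{-2\alpha}-b^{-2\alpha}|\le |a-b|/(ab^{2\alpha})$ to compare the $\Delta$-terms (as in \eqref{3.7}) and Lemma~\ref{L3.2} to compare the lengths, arriving at the two-sided bound \eqref{3.9}. You instead invoke the already-established Lipschitz bounds on $\Phi^t$ and $(\Phi^t)^{-1}$ from Theorem~\ref{T1.2} and Proposition~\ref{P2.8} (shifted to $t_0$) as black boxes, and compare $\ell$ and $\Delta$ directly via the trivial scaling $\ell(\Psi\circ\gamma)\le\|\Psi\|_{\dot C^{0,1}}\ell(\gamma)$ and $\Delta(\Psi\circ\gamma_1,\Psi\circ\gamma_2)\ge\|\Psi^{-1}\|_{\dot C^{0,1}}^{-1}\Delta(\gamma_1,\gamma_2)$. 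Both approaches yield a bound of the form $R_\mu(\Phi_*^{t_1}\Theta)\le e^{C|t_1-t_2|}R_\mu(\Phi_*^{t_2}\Theta)$, from which continuity and local boundedness follow; your version is cleaner in that it avoids repeating the $\eps$-limit argument, while the paper's version keeps the development more self-contained and parallel to the treatment of $L_\mu$ in Proposition~\ref{P2.6}. The time-shift you flag as a ``modest subtlety'' is indeed justified by the uniqueness in Theorem~\ref{T1.2} together with $L_\mu(\Phi_*^{t_0}\Theta)<\infty$.
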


\begin{proof}
    Take any compact interval $J\subseteq I$ containing $0$.
    Lemmas~\ref{L2.1}, \ref{L2.5} and Corollary~\ref{C2.7} show that
    \begin{equation}\label{3.5}
        \abs{\Phi_{\eps}^{t_{1}}(x) - \Phi_{\eps}^{t_{1}}(y)
        - \Phi_{\eps}^{t_{2}}(x) + \Phi_{\eps}^{t_{2}}(y)}
        \leq (e^{2C_{\alpha}L_{\mu}(\Theta)\abs{t_{1}-t_{2}}} - 1)
        \abs{\Phi_{\eps}^{t_{1}}(x) - \Phi_{\eps}^{t_{1}}(y)}
    \end{equation}
    for all $\eps>0$, $t_{1},t_{2}\in[-T_{0},T_{0}]$ and $x,y\in\bbR^{2}$.
    Hence, taking $\eps\to 0$ shows that \eqref{3.5} continues to hold
    with $\Phi$ in place of $\Phi_{\eps}$. 
    Applying the same argument with $\Phi_{*}^{t}\Theta$ in place of $\Theta$ (recall the remark after \eqref{3.1}), shows that
    each $t\in J$ has a neighborhood such that
    \begin{equation}\label{3.6}
        \abs{\Phi^{t_{1}}(x) - \Phi^{t_{1}}(y)
        - \Phi^{t_{2}}(x) + \Phi^{t_{2}}(y)}
        \leq (e^{2C_{\alpha}M\abs{t_{1}-t_{2}}} - 1)
        \abs{\Phi^{t_{1}}(x) - \Phi^{t_{1}}(y)}
    \end{equation}
    holds for any $t_{1},t_{2}\in J$ in that neighborhood, where
    \[
        M\coloneqq \sup_{t\in J}L_{\mu}(\Phi_{*}^t\Theta) < \infty.
    \]
 Since $J$ is an interval, it follows that \eqref{3.6} in fact holds for
    all $t_{1},t_{2}\in J$.

    Fix $t_{1},t_{2}\in J$, $\lambda,\lambda'\in\mathcal{L}$, and $\eta>0$.
    Then there are $x\in\operatorname{im}(z^{0,\lambda})$ and
    $y\in\operatorname{im}(z^{0,\lambda'})$ such that
    $\Delta(z^{t_{1},\lambda},z^{t_{1},\lambda'}) = \abs{\Phi^{t_{1}}(x) - \Phi^{t_{1}}(y)}$.
    A similar argument as in \eqref{2.4} now shows that
    \begin{align*}
        &\frac{1}{\left(
            \Delta(z^{t_{1},\lambda},z^{t_{1},\lambda'}) + \eta
        \right)^{2\alpha}}
        - \frac{1}{\left(
            \Delta(z^{t_{2},\lambda},z^{t_{2},\lambda'}) + \eta
        \right)^{2\alpha}}
        \\&\quad\quad\quad\quad
        \leq \frac{\abs{\Phi^{t_{1}}(x) - \Phi^{t_{1}}(y)
        - \Phi^{t_{2}}(x) + \Phi^{t_{2}}(y)}}
        {\left(\abs{\Phi^{t_{1}}(x) - \Phi^{t_{1}}(y)} + \eta\right)
        \left(\abs{\Phi^{t_{2}}(x) - \Phi^{t_{2}}(y)} + \eta\right)^{2\alpha}}
        \\&\quad\quad\quad\quad
        \leq \frac{e^{2C_{\alpha}M\abs{t_{1} - t_{2}}} - 1}
        {(\Delta(z^{t_{2},\lambda},z^{t_{2},\lambda'}) + \eta)^{2\alpha}},
    \end{align*}
    thus
    \begin{equation}\label{3.7}
        \frac{1}{\left(
            \Delta(z^{t_{1},\lambda},z^{t_{1},\lambda'}) + \eta
        \right)^{2\alpha}}
        \leq \frac{e^{2C_{\alpha}M\abs{t_{1} - t_{2}}}}
        {\Delta(z^{t_{2},\lambda},z^{t_{2},\lambda'})^{2\alpha}}.
    \end{equation}
    Lemma~\ref{L3.2} yields
    \[
        e^{-3C_{\alpha}M\abs{t_{1} - t_{2}}}\ell(z^{t_{2},\lambda})
        \leq \ell(z^{t_{1},\lambda}) \leq
        e^{3C_{\alpha}M\abs{t_{1} - t_{2}}}\ell(z^{t_{2},\lambda})
    \]
    for any $t_{1},t_{2}\in J$ because $J$ is an interval, and so we get
    \begin{equation}\label{3.8}
        \frac{\ell(z^{t_{1},\lambda})^{1/2}}
        {\ell(z^{t_{1},\lambda'})^{1/2}\left(
            \Delta(z^{t_{1},\lambda},z^{t_{1},\lambda'}) + \eta
        \right)^{2\alpha}}
        \leq \frac{e^{5C_{\alpha}M\abs{t_{1} - t_{2}}}\ell(z^{t_{2},\lambda})^{1/2}}
        {\ell(z^{t_{2},\lambda'})^{1/2}
        \Delta(z^{t_{2},\lambda},z^{t_{2},\lambda'})^{2\alpha}}.
    \end{equation}
    Letting $\eta\to 0^{+}$, integrating in $\lambda'$,
    and then taking the supremum over $\lambda\in\calL$ shows that
    \begin{equation}\label{3.9}
        R_{\mu}(\Phi_{*}^{t_{1}}\Theta)
        \leq e^{5C_{\alpha}M\abs{t_{1} - t_{2}}}R_{\mu}(\Phi_{*}^{t_{2}}\Theta).
    \end{equation}
    Since $t_{1},t_{2}\in J$ were arbitrary, the claim follows.
\end{proof}

This proves the second claim in Theorem \ref{T1.4}(i). Our proof of Theorem \ref{T1.4}(ii) will use the following
extension to $R_{\mu}(\Phi_{\eps*}^{t}\Theta)$ of the bound on $R_{\mu}(\Phi_{*}^{t}\Theta)$ from the last proof.

\begin{lemma}\label{L3.4}
    $R_{\mu}(\Phi_{\eps*}^{t}\Theta)$ is continuous in $t$ for each $\eps>0$, and
    \begin{equation}\label{3.10}
        R_{\mu}(\Phi_{\eps*}^{t}\Theta) \leq e^{4C_{\alpha}L_{\mu}(\Theta)\abs{t}}R_{\mu}(\Theta)
        \leq 8R_{\mu}(\Theta)
    \end{equation}
holds for any $t\in[-T_{0},T_{0}]$,  with    $C_{\alpha}$ from Lemma~\ref{L2.1}.
\end{lemma}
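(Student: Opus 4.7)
The plan is to rerun the argument of Proposition~\ref{P3.3} with $\Phi_\eps$ in place of $\Phi$. The key observation is that the two main ingredients of that argument, namely \eqref{3.5} and Lemma~\ref{L3.1}, are already stated at the $\eps$-regularized level with the clean exponent $L_\mu(\Theta)$ (rather than an a-priori sup such as $M$), because on $[-T_0,T_0]$ Corollary~\ref{C2.7} gives $L_\mu(\Phi_{\eps*}^t\Theta)\le 2L_\mu(\Theta)$ and so $\|u_\eps(\theta_\eps^t)\|_{\dot C^{0,1}}\le 2C_\alpha L_\mu(\Theta)$ uniformly in $\eps$. Consequently, the proof of Lemma~\ref{L3.4} does not need any limit $\eps\to 0$ step.

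First I would record the two preliminary bounds. Applying Grönwall's inequality to \eqref{3.3} gives, for all $t\in[-T_0,T_0]$, $\lambda\in\mathcal L$, and $\eps>0$,
\[
    e^{-2C_\alpha L_\mu(\Theta)|t|}\ell(z^{0,\lambda})
    \le \ell_\eps^{t,\lambda}
    \le e^{2C_\alpha L_\mu(\Theta)|t|}\ell(z^{0,\lambda}).
\]
Next, repeating the computation in \eqref{2.4} (now with $\Phi_\eps^{t_1}(y),\Phi_\eps^{t_2}(y)$ in place of $\partial\Phi_\eps^{t_1}(\Theta^\lambda),\partial\Phi_\eps^{t_2}(\Theta^\lambda)$) and using \eqref{3.5} together with $\eta$-regularization, for any $\lambda,\lambda'\in\mathcal L$, $t_1,t_2\in[-T_0,T_0]$, and $\eta>0$ we obtain
\[
    \frac{1}{(\Delta_\eps^{t_1,\lambda,\lambda'}+\eta)^{2\alpha}}
    \le \frac{e^{2C_\alpha L_\mu(\Theta)|t_1-t_2|}}{(\Delta_\eps^{t_2,\lambda,\lambda'})^{2\alpha}}.
\]
The computation is exactly the one in Proposition~\ref{P3.3}: one picks $x\in\operatorname{im}(z^{0,\lambda})$, $y\in\operatorname{im}(z^{0,\lambda'})$ that nearly realize $\Delta_\eps^{t_1,\lambda,\lambda'}=|\Phi_\eps^{t_1}(x)-\Phi_\eps^{t_1}(y)|$ up to error $\eta$, and then \eqref{3.5} controls the time-increment $|\Phi_\eps^{t_1}(x)-\Phi_\eps^{t_1}(y)-\Phi_\eps^{t_2}(x)+\Phi_\eps^{t_2}(y)|$.

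Combining these two displays with $t_1=t$, $t_2=0$ (so that $\Phi_\eps^0=\mathrm{Id}$ and $\ell_\eps^{0,\lambda}=\ell(z^{0,\lambda})$, $\Delta_\eps^{0,\lambda,\lambda'}=\Delta(z^{0,\lambda},z^{0,\lambda'})$), then letting $\eta\to 0^+$, I get
\[
    \frac{\ell_\eps^{t,\lambda,1/2}}{\ell_\eps^{t,\lambda',1/2}(\Delta_\eps^{t,\lambda,\lambda'})^{2\alpha}}
    \le e^{4C_\alpha L_\mu(\Theta)|t|}
    \cdot\frac{\ell(z^{0,\lambda})^{1/2}}{\ell(z^{0,\lambda'})^{1/2}\Delta(z^{0,\lambda},z^{0,\lambda'})^{2\alpha}},
\]
where the three factors $e^{C_\alpha L_\mu(\Theta)|t|}$ (from $\ell_\eps^{t,\lambda,1/2}$ and from $1/\ell_\eps^{t,\lambda',1/2}$) and $e^{2C_\alpha L_\mu(\Theta)|t|}$ (from the $\Delta$-ratio) multiply to $e^{4C_\alpha L_\mu(\Theta)|t|}$. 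Integrating in $\lambda'$ against $|\mu|$ and taking supremum in $\lambda$ yields $R_\mu(\Phi_{\eps*}^t\Theta)\le e^{4C_\alpha L_\mu(\Theta)|t|}R_\mu(\Theta)$. For $|t|\le T_0=\frac{1}{2C_\alpha L_\mu(\Theta)}$ the exponent is at most $2$, and since $e^2<8$ the second inequality of \eqref{3.10} follows.

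For continuity in $t$, I would apply the same argument between arbitrary times $t_1,t_2\in[-T_0,T_0]$ (rather than with $t_2=0$), which gives $R_\mu(\Phi_{\eps*}^{t_1}\Theta)\le e^{4C_\alpha L_\mu(\Theta)|t_1-t_2|}R_\mu(\Phi_{\eps*}^{t_2}\Theta)$; by symmetry one gets the reverse bound as well, and continuity of $t\mapsto R_\mu(\Phi_{\eps*}^t\Theta)$ is immediate. No step is really an obstacle here, since the $\eps$-regularized ingredients are already set up with the correct constants; the only thing to be vigilant about is applying the elementary inequality $|a^{-2\alpha}-b^{-2\alpha}|\le |a-b|\,a^{-1}b^{-2\alpha}$ (with $a\le b$) in the right direction and handling the $+\eta$ regularization cleanly, just as in \eqref{2.4} and \eqref{3.7}.
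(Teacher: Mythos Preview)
Your proposal is correct and follows essentially the same approach as the paper: the paper's proof also reruns the argument of Proposition~\ref{P3.3} with $\Phi_\eps$ in place of $\Phi$, using \eqref{3.5} and the Gr\"onwall bound from \eqref{3.3} directly (with exponent $2C_\alpha L_\mu(\Theta)$) to obtain \eqref{3.8} and \eqref{3.9} with $4C_\alpha L_\mu(\Theta)$ in place of $5C_\alpha M$. The only cosmetic difference is that the paper writes the argument between arbitrary $t_1,t_2\in[-T_0,T_0]$ once and reads off both continuity and \eqref{3.10} from it, whereas you first specialize to $t_2=0$ and then redo it for general $t_1,t_2$.
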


\begin{proof}
    We have \eqref{3.3} for $t\in[-T_{0},T_{0}]$, so a Gr\"{o}nwall-type argument shows that
    \[
        e^{-2C_{\alpha}L_{\mu}(\Theta)\abs{t_{1} - t_{2}}}\ell_{\eps}^{t_{2},\lambda}
        \leq \ell_{\eps}^{t_{1},\lambda} \leq
        e^{2C_{\alpha}L_{\mu}(\Theta)\abs{t_{1} - t_{2}}}\ell_{\eps}^{t_{2},\lambda}
    \]
    for any $t_{1},t_{2}\in[-T_{0},T_{0}]$ and $\lambda\in\mathcal{L}$.
    Since \eqref{3.5} holds, a similar argument as in \eqref{3.7} shows
    \[
        \frac{1}{\left(
            \Delta_{\eps}^{t_{1},\lambda,\lambda'} + \eta
        \right)^{2\alpha}}
        \leq \frac{e^{2C_{\alpha}L_{\mu}(\Theta)\abs{t_{1} - t_{2}}}}
        {(\Delta_{\eps}^{t_{2},\lambda,\lambda'})^{2\alpha}}
    \]
    for any $t_{1},t_{2}\in[-T_{0},T_{0}]$, $\lambda,\lambda'\in\mathcal{L}$ and $\eta>0$.
    Hence we see that \eqref{3.8}, and thus also \eqref{3.9}, continue to hold with
    $(z_{\eps},4C_{\alpha}L_{\mu}(\Theta),\Phi_{\eps})$ in place of $(z,5C_{\alpha}M,\Phi)$.
    This now shows both claims 
    (the second inequality in \eqref{3.10} follows by the definition of $T_{0}$
    and $e^{2}\leq 8$).
\end{proof}

We now turn to  Theorem \ref{T1.4}(ii),
which we prove by using Lemma \ref{L3.1} and 
by estimating the right-hand side of \eqref{3.2}
in lemmas  below.

\begin{lemma}\label{L3.5}
    There is $C_{\alpha}$ such that for any $\beta\in(0,1]$, any $C^{1,\beta}$ closed curve
    $\gamma\colon\ell\bbT\to\bbR^{2}$ parametrized by arclength, and any $x\in\bbR^{2}$ we have
    \[
        \int_{\ell\bbT}\frac{ds}{\abs{x - \gamma(s)}^{1+2\alpha}}
        \leq C_{\alpha}\frac{\ell\norm{\gamma}_{\dot{C}^{1,\beta}}^{1/\beta}}
        {d(x,\operatorname{im}(\gamma))^{2\alpha}}.
    \]
\end{lemma}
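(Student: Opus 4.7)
The plan is to work at the scale $\rho:=(2L)^{-1/\beta}$ (with $L:=\norm{\gamma}_{\dot{C}^{1,\beta}}$) on which $\gamma$ is nearly a straight line, to use this near-linearity to control the distribution function of $s\mapsto|x-\gamma(s)|$, and then to close the bound via the layer-cake identity together with a closed-curve turning estimate.

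First, for $s,s'\in\ell\bbT$ with $|s-s'|\le\rho$, I would write
\begin{equation*}
\gamma(s)-\gamma(s')=(s-s')\,\partial_s\gamma(s')+\int_{s'}^{s}\bigl[\partial_s\gamma(\tau)-\partial_s\gamma(s')\bigr]\,d\tau
\end{equation*}
and use $|\partial_s\gamma(s')|=1$ together with the H\"older bound $|\partial_s\gamma(\tau)-\partial_s\gamma(s')|\le L|\tau-s'|^{\beta}$ to deduce $|\gamma(s)-\gamma(s')|\ge |s-s'|/2$. Covering $\ell\bbT$ by $N\le\lceil 2\ell/\rho\rceil$ arcs of length at most $\rho/2$, this bi-Lipschitz bound forces, on each such arc, the preimage of any ball $B_r(x)$ to have diameter at most $4r$. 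Hence the distribution function $\phi(r):=|\{s\in\ell\bbT\colon|x-\gamma(s)|\le r\}|$ satisfies $\phi(r)\le 4Nr$, while trivially $\phi(r)\le\ell$ and $\phi(r)=0$ for $r<d_0:=d(x,\operatorname{im}(\gamma))$.

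Next, by Fubini applied to $r^{-1-2\alpha}=(1+2\alpha)\int_r^{\infty}t^{-2-2\alpha}\,dt$,
\begin{equation*}
\int_{\ell\bbT}\frac{ds}{|x-\gamma(s)|^{1+2\alpha}}=(1+2\alpha)\int_{d_0}^{\infty}\frac{\phi(t)}{t^{2+2\alpha}}\,dt.
\end{equation*}
Splitting at $a:=\ell/(4N)$ and using $\phi(t)\le 4Nt$ for $t\le a$ and $\phi(t)\le\ell=4Na$ for $t>a$, a short case analysis (on whether $d_0<a$) yields the intermediate bound $\int\le C_{\alpha}N/d_0^{2\alpha}$.

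Finally, to convert $N$ into $\ell L^{1/\beta}$ I would invoke the closed-curve constraint $\int_0^{\ell}\partial_s\gamma\,ds=\gamma(\ell)-\gamma(0)=0$. This forces some $s^{*}\in\ell\bbT$ with $\partial_s\gamma(s^{*})\cdot\partial_s\gamma(0)\le 0$, whence $\sqrt 2\le|\partial_s\gamma(s^{*})-\partial_s\gamma(0)|\le L\ell^{\beta}$ by the H\"older bound. Hence $\ell L^{1/\beta}\ge\sqrt 2^{1/\beta}\ge 1$, so $\ell/\rho\ge 1$ and $N\le 3\ell/\rho=3\cdot 2^{1/\beta}\ell L^{1/\beta}$, which absorbed into $C_{\alpha}$ delivers the claim. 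The main obstacle is precisely this last step: the covering and layer-cake computation on its own only yield $C_{\alpha}N/d_0^{2\alpha}$, and a priori $N$ could be as small as $1$ when $\ell\le\rho/2$; it is the closed-curve turning estimate that forces $\ell\gtrsim\rho$ and thereby supplies the required factor $\ell L^{1/\beta}$.
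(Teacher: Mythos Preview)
Your approach is essentially the same as the paper's: both exploit that the arclength parametrization is bi-Lipschitz with constant $\tfrac12$ at the scale $\sim\|\gamma\|_{\dot C^{1,\beta}}^{-1/\beta}$, cover $\ell\bbT$ by arcs of that length, and then control the integral via this local near-linearity. The paper does this by invoking \cite[Lemma~A.2]{JeoZla2} and splitting the integral directly, while you prove the geometric input yourself and package it through a layer-cake identity; your closed-curve turning argument (showing $\ell L^{1/\beta}\gtrsim 1$) is the self-contained substitute for what the paper gets from that cited lemma.

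There is, however, one genuine slip. Your choice $\rho=(2L)^{-1/\beta}$ yields $N\le 3\ell/\rho=3\cdot 2^{1/\beta}\,\ell L^{1/\beta}$, and the factor $2^{1/\beta}$ \emph{cannot} be absorbed into a constant depending only on $\alpha$: it blows up as $\beta\to 0^+$, so the lemma as stated is not yet proved. The remedy is to sharpen the bi-Lipschitz estimate so that it already holds at scale $\rho=L^{-1/\beta}$. Since $|\partial_s\gamma|\equiv 1$, projecting onto $\partial_s\gamma(s')$ gives (for $s>s'$)
\[
(\gamma(s)-\gamma(s'))\cdot\partial_s\gamma(s')
=\int_{s'}^{s}\Bigl(1-\tfrac12\,|\partial_s\gamma(\tau)-\partial_s\gamma(s')|^{2}\Bigr)\,d\tau
\ge (s-s')-\frac{L^{2}|s-s'|^{1+2\beta}}{2(1+2\beta)},
\]
so for $|s-s'|\le L^{-1/\beta}$ one obtains $|\gamma(s)-\gamma(s')|\ge |s-s'|\bigl(1-\tfrac{1}{2(1+2\beta)}\bigr)\ge|s-s'|/2$, uniformly in $\beta$. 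With this choice of $\rho$ your argument gives $N\le 3\,\ell L^{1/\beta}$ (your turning estimate still yields $\ell/\rho=\ell L^{1/\beta}\ge\sqrt{2}^{1/\beta}\ge 1$), and the claimed $\beta$-independent $C_\alpha$ follows. This refinement is also what underlies the paper's choice $d=\tfrac14 L^{-1/\beta}$.
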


\begin{proof}
    Let $d\coloneqq\frac{1}{4}\norm{\gamma}_{\dot{C}^{1,\beta}}^{-1/\beta}$
    and $\Delta\coloneqq d(x,\operatorname{im}(\gamma))$.
    Then \cite[Lemma~A.2]{JeoZla2} shows that
    \begin{align*}
        \int_{\ell\bbT}\frac{ds}{\abs{x - \gamma(s)}^{1+2\alpha}}
        &\leq \frac{\ell}{4d}\left(
            \int_{\abs{s}\leq\Delta}\frac{ds}{\Delta^{1+2\alpha}}
            + \int_{\Delta\leq\abs{s}\leq 2d}\frac{ds}{\abs{s/2}^{1+2\alpha}}
        \right)
        + \frac{1}{\Delta^{2\alpha}}\int_{\ell\bbT}\frac{ds}{d} \\
        &\leq \frac{\ell}{2d\Delta^{2\alpha}}
        + \frac{\ell}{2^{1-2\alpha}\alpha d\Delta^{2\alpha}}
        + \frac{\ell}{d\Delta^{2\alpha}}
        = C_{\alpha}\frac{\ell\norm{\gamma}_{\dot{C}^{1,\beta}}^{1/\beta}}
        {\Delta^{2\alpha}}.
    \end{align*}
\end{proof}

\begin{lemma}\label{L3.6}
    For any $\theta\in L^{1}(\bbR^{2})\cap L^{\infty}(\bbR^{2})$ with
    a generalized layer cake representation $(\Theta,\mu)$
    such that $L_{\mu}(\Theta)<\infty$, and for any $\eps>0$ and $x,h_{1},h_{2}\in\bbR^{2}$, we have
    \begin{align*}
        D^{2}(u_{\eps}(\theta))(x)(h_{1},h_{2})
        &= \int_{\mathcal{L}}\int_{\bbR^{2}}
        D^{2}(\nabla^{\perp}K_{\eps})(x - y)(h_{1},h_{2})
        \left(\mathbbm{1}_{\Theta^{\lambda}}(y) - \mathbbm{1}_{\Theta^{\lambda}}(x)\right)
        dy\,d\mu(\lambda) \\
        &= \int_{\mathcal{L}}\int_{\Theta^{\lambda}}
        D^{2}(\nabla^{\perp}K_{\eps})(x - y)(h_{1},h_{2})
        \,dy\,d\mu(\lambda).
    \end{align*}
    Moreover, there is $C_{\alpha}$  such that
    \[
        \int_{\mathcal{L}}\int_{\bbR^{2}}
        \abs{D^{2}(\nabla^{\perp}K_{\eps})(x - y)}
        \abs{\mathbbm{1}_{\Theta^{\lambda}}(y) - \mathbbm{1}_{\Theta^{\lambda}}(x)}
        d|\mu|(\lambda)\,dy
        \leq \frac{C_{\alpha}}{\eps}
        \int_{\mathcal{L}}\frac{d|\mu|(\lambda)}{d(x,\partial\Theta^{\lambda})^{2\alpha}}.
    \]
\end{lemma}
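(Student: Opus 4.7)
The plan is to establish the second (absolute) bound first, and then use it together with oddness to derive the identity. The key observation, already exploited in Lemmas~\ref{L2.1} and \ref{L2.2}, is that $\mathbbm{1}_{\Theta^\lambda}(y) - \mathbbm{1}_{\Theta^\lambda}(x) \neq 0$ forces $\abs{x-y} \geq d(x,\partial\Theta^\lambda)$; combined with the pointwise bound $\abs{D^2(\nabla^\perp K_\eps)(z)} \leq C_\alpha \max\{\abs{z},\eps\}^{-3-2\alpha}$ (which is the $n=3$ case of the general $D^n K_\eps$ estimate stated at the beginning of Section~2), the bound reduces to proving the kernel estimate
\[
    \int_{\abs{z}\geq D}\abs{D^2(\nabla^\perp K_\eps)(z)}\,dz
    \leq \frac{C_\alpha}{\eps\,D^{2\alpha}}
    \qquad\text{for every } D \geq 0.
\]

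For this kernel estimate I would split into two cases. When $D\geq\eps$, the pointwise bound on the domain simplifies to $C_\alpha\abs{z}^{-3-2\alpha}$ and integrates to a constant multiple of $D^{-1-2\alpha}$, which is at most $1/(\eps D^{2\alpha})$ because $D\geq\eps$. When $D<\eps$, I would further split the region $\{\abs{z}\geq D\}$ at $\abs{z}=\eps$: on the annulus $\{D\leq\abs{z}\leq\eps\}$ the integrand is bounded by $C_\alpha\eps^{-3-2\alpha}$ with area $O(\eps^2)$, contributing $O(\eps^{-1-2\alpha})$; on $\{\abs{z}\geq\eps\}$ the $C_\alpha\abs{z}^{-3-2\alpha}$ bound again yields $O(\eps^{-1-2\alpha})$; both are dominated by $1/(\eps D^{2\alpha})$ since $D^{2\alpha}\leq\eps^{2\alpha}$. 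Integrating the resulting $y$-bound over $\mathcal{L}$ against $d|\mu|(\lambda)$ then gives the stated absolute bound.

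With this absolute bound in hand and $L_\mu(\Theta)<\infty$ given, Fubini's theorem applies to the iterated integral appearing on the right-hand side of the identity; at the same time, smoothness of $K_\eps$ together with $D^2(\nabla^\perp K_\eps)\in L^1(\bbR^2)$ lets me differentiate under the integral to get $D^2(u_\eps(\theta))(x)(h_1,h_2)=\int_{\bbR^2} D^2(\nabla^\perp K_\eps)(x-y)(h_1,h_2)\,\theta(y)\,dy$. Writing $\theta(y)=\int_\mathcal{L}\mathbbm{1}_{\Theta^\lambda}(y)\,d\mu(\lambda)$ and swapping the order of integration yields the second form of the identity (with integration over $\Theta^\lambda$). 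To recover the first form with the subtracted $\mathbbm{1}_{\Theta^\lambda}(x)$ term, I would use that $K_\eps$ is even (since both $\chi$ and $K$ are), so $\nabla^\perp K_\eps$ is odd and a second derivative of an odd vector field is again odd; combined with integrability this gives $\int_{\bbR^2}D^2(\nabla^\perp K_\eps)(x-y)(h_1,h_2)\,dy=0$, so adding the $-\mathbbm{1}_{\Theta^\lambda}(x)$ factor inside the integral does not alter the value. The only mildly delicate point I expect is the two-regime kernel estimate, since both cases must combine cleanly into a single factor of $\eps^{-1} D^{-2\alpha}$; the rest is Fubini bookkeeping and oddness.
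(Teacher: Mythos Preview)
Your proposal is correct and follows essentially the same strategy as the paper (oddness, layer cake expansion, the observation that the integrand is supported where $|x-y|\geq d(x,\partial\Theta^\lambda)$, then Fubini). The one place you work harder than necessary is the kernel estimate: the paper bypasses your two-regime case analysis by using the cruder pointwise bound $\abs{D^{2}(\nabla^{\perp}K_{\eps})(x-y)}\leq \frac{C_{\alpha}}{\eps\abs{x-y}^{2+2\alpha}}$ (valid since $\max\{\abs{z},\eps\}^{3+2\alpha}\geq \eps\abs{z}^{2+2\alpha}$), after which the integral over $\{|x-y|\geq d(x,\partial\Theta^\lambda)\}$ is exactly the computation from Lemma~\ref{L2.1} with an extra factor of $\eps^{-1}$.
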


\begin{proof}
    Oddness of $D^{2}(\nabla^{\perp}K_{\eps})$ shows that
    \begin{align*}
        D^{2}(u_{\eps}(\theta))(x)(h_{1},h_{2})
        &= \int_{\bbR^{2}}D^{2}(\nabla^{\perp}K_{\eps})(x - y)(h_{1},h_{2})
        (\theta(y) - \theta(x))\,dy \\
        &= \int_{\bbR^{2}}\int_{\mathcal{L}}
        D^{2}(\nabla^{\perp}K_{\eps})(x - y)(h_{1},h_{2})
        \left(\mathbbm{1}_{\Theta^{\lambda}}(y) - \mathbbm{1}_{\Theta^{\lambda}}(x)\right)
        d\mu(\lambda)\,dy.
    \end{align*}
    Then proceeding as in Lemma~\ref{L2.1} and using 
    $\abs{D^{2}(\nabla^{\perp}K_{\eps})(x - y)}
    \leq \frac{C_{\alpha}}{\eps\abs{x - y}^{2+2\alpha}}$
    in place of $\abs{D(\nabla^{\perp}K_{\eps})(x - y)}
    \leq \frac{C_{\alpha}}{\abs{x - y}^{2+2\alpha}}$ proves the second claim.
    Fubini's theorem now yields the first equality of the first claim,
    and the second one follows by oddness of $D^{2}(\nabla^{\perp}K_{\eps})$.
\end{proof}

\begin{lemma}\label{L3.7}
    There is $C_{\alpha}$ such that for each $(t,\lambda)\in\bbR\times\mathcal{L}$ and $\eps>0$ we have
    \begin{equation}\label{3.11}
        \abs{\partial_{t}\norm{z_{\eps}^{t,\lambda}}_{\dot{H}^{2}}^{2}}
        \leq C_{\alpha}(L_{\mu}(\Phi_{\eps*}^{t}\Theta) + R_{\mu}(\Phi_{\eps*}^{t}\Theta)) \,
        Q(\Phi_{\eps*}^{t}\Theta)\norm{z_{\eps}^{t,\lambda}}_{\dot{H}^{2}}^{2}
    \end{equation}
\end{lemma}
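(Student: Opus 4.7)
The plan is to bound each of the two integrals on the right-hand side of \eqref{3.2} separately so that their sum has the desired form $C_\alpha(L_\mu+R_\mu)Q\norm{z_\eps^{t,\lambda}}_{\dot H^2}^2$; throughout I abbreviate $L_\mu,R_\mu,Q$ for the quantities evaluated at $\Phi_{\eps*}^{t}\Theta$, and write $\mathbf T,\mathbf N,\kappa,\ell$ for the corresponding quantities along $z_\eps^{t,\lambda}$. The first integral is easy: the chain rule gives $\partial_s u_\eps^{t,\lambda}(s)=Du_\eps(\theta_\eps^t)(z_\eps^{t,\lambda}(s))\mathbf T(s)$, so Lemma~\ref{L2.1} yields $\abs{\partial_s u_\eps^{t,\lambda}\cdot\mathbf T}\leq C_\alpha L_\mu$ and hence the first integral is bounded by $3C_\alpha L_\mu\norm{z_\eps^{t,\lambda}}_{\dot H^2}^2$. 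Since any positively oriented simple closed $H^2$ curve $\gamma$ has total curvature $2\pi$, Cauchy--Schwarz gives $\ell(\gamma)\norm{\gamma}_{\dot H^2}^2\geq 4\pi^2$, and thus $Q\geq 4\pi^2$, so this bound absorbs into $C_\alpha L_\mu Q\norm{z_\eps^{t,\lambda}}_{\dot H^2}^2$.

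For the second integral, $\partial_s\mathbf T=\kappa\mathbf N$ and the chain rule decompose $\partial_s^2 u_\eps^{t,\lambda}=D^2u_\eps(\mathbf T,\mathbf T)+\kappa\,Du_\eps\,\mathbf N$ (evaluated at $z_\eps^{t,\lambda}(s)$). The contribution of the $\kappa\,Du_\eps\mathbf N$ term is $2\int\kappa^2(Du_\eps\mathbf N\cdot\mathbf N)\,ds$, bounded again by $C_\alpha L_\mu Q\norm{z_\eps^{t,\lambda}}_{\dot H^2}^2$ via Lemma~\ref{L2.1}. The main work therefore concerns $J\coloneqq\int\kappa(s)\,[D^2 u_\eps(\theta_\eps^t)(z_\eps^{t,\lambda}(s))(\mathbf T,\mathbf T)\cdot\mathbf N]\,ds$. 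I would use Lemma~\ref{L3.6} to expand the Hessian as a generalized layer-cake integral and exploit that $\mathbf T$ is constant in the dummy variable $y$ to write $D^2(\nabla^\perp K_\eps)(x-y)(\mathbf T,\mathbf T)=\nabla_y\cdot[\mathbf T\,(\mathbf T\cdot\nabla_y)\nabla^\perp K_\eps(x-y)]$. The divergence theorem then converts the area integral on each $\Theta_\eps^{t,\lambda'}$ into a line integral on $\partial\Theta_\eps^{t,\lambda'}=\operatorname{im}(z_\eps^{t,\lambda'})$.

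I would next parametrize this boundary integral by arclength $\sigma$ on $z_\eps^{t,\lambda'}$ and decompose $\mathbf T=(\mathbf T\cdot\mathbf T')\mathbf T'+(\mathbf T\cdot\mathbf N')\mathbf N'$ in the local frame $(\mathbf T',\mathbf N')=(\mathbf T_\eps^{t,\lambda'},\mathbf N_\eps^{t,\lambda'})$. The tangential component admits integration by parts in $\sigma$, producing a factor of the curvature $\kappa_\eps^{t,\lambda'}$ which, after Cauchy--Schwarz in $\sigma$, pulls out $\norm{z_\eps^{t,\lambda'}}_{\dot H^2}$. The normal component is estimated pointwise using $|D(\nabla^\perp K_\eps)(z)|\leq C_\alpha|z|^{-2-2\alpha}$. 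Both pieces reduce to estimating line integrals of the form $\int d\sigma/|z_\eps^{t,\lambda}(s)-z_\eps^{t,\lambda'}(\sigma)|^p$ for $p\in\{2+2\alpha,2+4\alpha\}$, which I would control by analogs of Lemma~\ref{L3.5} with $\beta=1/2$, combined with the Sobolev embedding $\norm{\gamma}_{\dot C^{1,1/2}}\leq\norm{\gamma}_{\dot H^2}$.

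A final Cauchy--Schwarz in $s$ (using $\norm{\kappa}_{L^2(\ell_\eps^{t,\lambda}\bbT)}=\norm{z_\eps^{t,\lambda}}_{\dot H^2}$), together with the bound $\ell_\eps^{t,\lambda'}\norm{z_\eps^{t,\lambda'}}_{\dot H^2}^2\leq Q$, would provide the required $Q\norm{z_\eps^{t,\lambda}}_{\dot H^2}^2$ factor. The hard part will be the bookkeeping that ensures only the power $(\Delta_\eps^{t,\lambda,\lambda'})^{-2\alpha}$ survives inside the $\lambda'$-integration, with the factor $\sqrt{\ell_\eps^{t,\lambda}/\ell_\eps^{t,\lambda'}}$ emerging naturally from the two Cauchy--Schwarz steps and the Lemma~\ref{L3.5}-type estimates; this is precisely what matches the definition of $R_\mu$ and yields the bound $C_\alpha R_\mu Q\norm{z_\eps^{t,\lambda}}_{\dot H^2}^2$ on $J$, completing the proof.
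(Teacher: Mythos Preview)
Your outline is correct up through the tangential (integrate-by-parts) part of the boundary integral, and this matches the paper's treatment of what it calls $G_2$.  The gap is in the normal component.  After the decomposition $\mathbf T=(\mathbf T\cdot\mathbf T')\mathbf T'+(\mathbf T\cdot\mathbf N')\mathbf N'$, the normal piece carries an angular factor $(\mathbf T\cdot\mathbf N')^2$ multiplying a kernel of size $\abs{z_\eps^{t,\lambda}(s)-z_\eps^{t,\lambda'}(\sigma)}^{-2-2\alpha}$.  Your plan is to bound this ``pointwise'' by the kernel alone, i.e., to use $(\mathbf T\cdot\mathbf N')^2\le 1$ and then control line integrals $\int d\sigma/\abs{\,\cdot\,}^{2+2\alpha}$ by a Lemma~\ref{L3.5} analog.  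But any such analog gives $\int_{\ell'\bbT} d\sigma/\abs{\,\cdot\,}^{2+2\alpha}\lesssim \ell_\eps^{t,\lambda'}\norm{z_\eps^{t,\lambda'}}_{\dot H^2}^2/\Delta^{1+2\alpha}$, not $1/\Delta^{2\alpha}$; the subsequent Cauchy--Schwarz in $s$ (which is where your exponent $2+4\alpha$ appears) does not recover the lost power.  You therefore end up needing control of $\int_{\mathcal L} d\abs{\mu}(\lambda')/\Delta^{\beta}$ for some $\beta>2\alpha$, and neither $L_\mu$ nor $R_\mu$ provides this.

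The missing idea is that the angular factor $(\mathbf T\cdot\mathbf N')^2$ is itself small near close approaches of the two curves, and this smallness exactly compensates for the extra power of distance.  The paper invokes \cite[Lemma~A.4]{JeoZla2}, which bounds $(\mathbf T_\eps^{\lambda'}\cdot\mathbf N_\eps^\lambda)^2/\abs{z_\eps^\lambda(s)-z_\eps^{\lambda'}(s')}^{2+2\alpha}$ by $C_\alpha(\mathcal M\kappa_\eps^\lambda(s)+\mathcal M\kappa_\eps^{\lambda'}(s'))/\abs{z_\eps^\lambda(s)-z_\eps^{\lambda'}(s')}^{1+2\alpha}$, with $\mathcal M$ the one-sided Hardy--Littlewood maximal function on the respective circles.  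This restores the $1+2\alpha$ singularity so that Lemma~\ref{L3.5} applies, and the maximal inequality $\norm{\mathcal M f}_{L^2}\le C\norm{f}_{L^2}$ then closes the estimate exactly as in your $G_2$ argument, producing $C_\alpha(L_\mu+R_\mu)Q\norm{z_\eps^{t,\lambda}}_{\dot H^2}^2$.  Without this step (or an equivalent geometric bound on the angle between the curves), the normal term cannot be controlled by $L_\mu$ and $R_\mu$.
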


\begin{proof}
    With $z_{\eps}^{t,\lambda}(\cdot)$ being
    the previously fixed arclength parametrization of $z_{\eps}^{t,\lambda}$, we have
    \begin{align*}
        \partial_{s}^{2}u_{\eps}^{t,\lambda}(s)
        &= D^{2}(u_{\eps}(\theta_{\eps}^{t}))(z_{\eps}^{t,\lambda}(s))
        (\mathbf{T}_{\eps}^{t,\lambda}(s), \mathbf{T}_{\eps}^{t,\lambda}(s))
        + \kappa_{\eps}^{t,\lambda}(s)D(u_{\eps}(\theta_{\eps}^{t}))
        (z_{\eps}^{t,\lambda}(s))(\mathbf{N}_{\eps}^{t,\lambda}(s)).
    \end{align*}
    Hence \eqref{3.2} yields
    \begin{equation}\label{3.12}
        \begin{aligned}
            \partial_{t}\norm{z_{\eps}^{t,\lambda}}_{\dot{H}^{2}}^{2} &=
            \int_{\ell_{\eps}^{t,\lambda}\bbT}\kappa_{\eps}^{t,\lambda}(s)^{2}
            \left[
                2D(u_{\eps}(\theta_{\eps}^{t}))(z_{\eps}^{t,\lambda}(s))
                (\mathbf{N}_{\eps}^{t,\lambda}(s))
                \cdot \mathbf{N}_{\eps}^{t,\lambda}(s)
                - 3\,\partial_{s}u_{\eps}^{t,\lambda}(s)\cdot\mathbf{T}_{\eps}^{t,\lambda}(s)
            \right] ds
            \\&\quad\quad
            + 2\int_{\ell_{\eps}^{t,\lambda}\bbT}\kappa_{\eps}^{t,\lambda}(s)
            \left[
                D^{2}(u_{\eps}(\theta_{\eps}^{t}))(z_{\eps}^{t,\lambda}(s))
                (\mathbf{T}_{\eps}^{t,\lambda}(s),\mathbf{T}_{\eps}^{t,\lambda}(s))
                \cdot \mathbf{N}_{\eps}^{t,\lambda}(s)
            \right] ds.
        \end{aligned}
    \end{equation}
    Lemma~\ref{L2.1} shows that the absolute value of the first integral is bounded by
    \[
        5\norm{u_{\eps}(\theta_{\eps}^{t})}_{\dot{C}^{0,1}}
        \norm{z_{\eps}^{t,\lambda}}_{\dot{H}^{2}}^{2}
        \leq C_{\alpha}L_{\mu}(\Phi_{\eps*}^{t}\Theta)\norm{z_{\eps}^{t,\lambda}}_{\dot{H}^{2}}^{2}
        \leq C_{\alpha}L_{\mu}(\Phi_{\eps*}^{t}\Theta)
        Q(\Phi_{\eps*}^{t}\Theta)\norm{z_{\eps}^{t,\lambda}}_{\dot{H}^{2}}^{2},
    \]
    where the second inequality holds by
    $Q(\Phi_{\eps*}^{t}\Theta)\geq 4$, which in turn follows from \cite[Lemma~A.1]{JeoZla2}.
    Hence, it remains to estimate the second integral, which we denote $G_{1}$.
    We will suppress $t$ from the notation for the sake of simplicity
    because  it will be fixed in the arguments below.
    
    Since $L_{\mu}(\Phi_{\eps*}\Theta)<\infty$, Lemma~\ref{L3.6},
    Fubini's theorem, and Green's theorem show that
    \begin{align*}
        G_{1} &=
        \int_{\ell_{\eps}^{\lambda}\bbT}\int_{\mathcal{L}}\int_{\Phi_{\eps}(\Theta^{\lambda'})}
        \kappa_{\eps}^{\lambda}(s)\mathbf{N}_{\eps}^{\lambda}(s) \cdot
        D^{2}(\nabla^{\perp}K_{\eps})(z_{\eps}^{\lambda}(s) - y)
        (\mathbf{T}_{\eps}^{\lambda}(s), \mathbf{T}_{\eps}^{\lambda}(s))
        \,dy\,d\mu(\lambda')\,ds\\
        &= \int_{\mathcal{L}}\int_{\ell_{\eps}^{\lambda}\bbT}\int_{\Phi_{\eps}(\Theta^{\lambda'})}
        \kappa_{\eps}^{\lambda}(s)\mathbf{N}_{\eps}^{\lambda}(s) \cdot
        D^{2}(\nabla^{\perp}K_{\eps})(z_{\eps}^{\lambda}(s) - y)
        (\mathbf{T}_{\eps}^{\lambda}(s), \mathbf{T}_{\eps}^{\lambda}(s))
        \,dy\,ds\,d\mu(\lambda')\\
        &= -\int_{\mathcal{L}}\int_{\ell_{\eps}^{\lambda}\bbT\times\ell_{\eps}^{\lambda'}\bbT}
        \kappa_{\eps}^{\lambda}(s)D^{2}K_{\eps}(z_{\eps}^{\lambda}(s) - z_{\eps}^{\lambda'}(s'))
        (\mathbf{T}_{\eps}^{\lambda}(s),\mathbf{T}_{\eps}^{\lambda}(s))
        (\mathbf{T}_{\eps}^{\lambda'}(s')\cdot \mathbf{N}_{\eps}^{\lambda}(s))
        \,ds\,ds'd\mu(\lambda').
    \end{align*}
    Note that the last integrand is jointly measurable in $(s,s')$ but not necessarily in $(s,s',\lambda')$ because the parametrizations
    $z_{\eps}^{\lambda'}(\cdot)$ are chosen independently from each other.
    From
    \[
        \mathbf{T}_{\eps}^{\lambda}(s) =
        (\mathbf{T}_{\eps}^{\lambda'}(s')\cdot\mathbf{T}_{\eps}^{\lambda}(s))\mathbf{T}_{\eps}^{\lambda'}(s')
        + (\mathbf{N}_{\eps}^{\lambda'}(s')\cdot\mathbf{T}_{\eps}^{\lambda}(s))\mathbf{N}_{\eps}^{\lambda'}(s')
    \]
    and $\mathbf{N}_{\eps}^{\lambda'}(s')\cdot\mathbf{T}_{\eps}^{\lambda}(s)
    = -\mathbf{T}_{\eps}^{\lambda'}(s')\cdot\mathbf{N}_{\eps}^{\lambda}(s)$
    it now follows that $\abs{G_{1}}$ is bounded by the sum of 
    \begin{align*}
        G_{2} &\coloneqq \overline{\int_{\mathcal{L}}}
        \bigg|\int_{\ell_{\eps}^{\lambda}\bbT\times\ell_{\eps}^{\lambda'}\bbT}
        \kappa_{\eps}^{\lambda}(s)
        D^{2}K_{\eps}(z_{\eps}^{\lambda}(s) - z_{\eps}^{\lambda'}(s'))
        (\mathbf{T}_{\eps}^{\lambda}(s), \mathbf{T}_{\eps}^{\lambda'}(s'))
        \\&\qquad\qquad\qquad\qquad
        (\mathbf{T}_{\eps}^{\lambda'}(s') \cdot \mathbf{N}_{\eps}^{\lambda}(s))
        (\mathbf{T}_{\eps}^{\lambda'}(s') \cdot \mathbf{T}_{\eps}^{\lambda}(s))
        \,ds\,ds'\bigg|\,d|\mu|(\lambda'), \\
        G_{3} &\coloneqq \overline{\int_{\mathcal{L}}}
        \bigg|\int_{\ell_{\eps}^{\lambda}\bbT\times\ell_{\eps}^{\lambda'}\bbT}
        \kappa_{\eps}^{\lambda}(s)
        D^{2}K_{\eps}(z_{\eps}^{\lambda}(s) - z_{\eps}^{\lambda'}(s'))
        (\mathbf{T}_{\eps}^{\lambda}(s), \mathbf{N}_{\eps}^{\lambda'}(s'))
        (\mathbf{T}_{\eps}^{\lambda'}(s') \cdot \mathbf{N}_{\eps}^{\lambda}(s))^{2}
        \,ds\,ds'\bigg|\,d|\mu|(\lambda'),
    \end{align*}
    which we estimate separately next.
    Here $\overline{\int_{\mathcal{L}}}f(\lambda')\,d|\mu|(\lambda')$
    for  $f\colon\mathcal{L}\to[0,\infty]$ is the upper Lebesgue integral
    \[
        \overline{\int_{\mathcal{L}}}f(\lambda')\,d|\mu|(\lambda')
        \coloneqq \inf_{g}\int_{\mathcal{L}}g(\lambda')\,d|\mu|(\lambda'),
    \]
    where the $\inf$ ranges over all measurable $g\ge f$.

    \textbf{Estimate for $G_{2}$.} Since
    \begin{align*}
        &\frac{\partial}{\partial s'}\left(
            DK_{\eps}(z_{\eps}^{\lambda}(s) - z_{\eps}^{\lambda'}(s'))(\mathbf{T}_{\eps}^{\lambda}(s))
            (\mathbf{T}_{\eps}^{\lambda'}(s') \cdot \mathbf{N}_{\eps}^{\lambda}(s))
            (\mathbf{T}_{\eps}^{\lambda'}(s') \cdot \mathbf{T}_{\eps}^{\lambda}(s))
        \right)
        \\&\quad\quad
        = -D^{2}K_{\eps}(z_{\eps}^{\lambda}(s) - z_{\eps}^{\lambda'}(s'))
        (\mathbf{T}_{\eps}^{\lambda}(s), \mathbf{T}_{\eps}^{\lambda'}(s'))
        (\mathbf{T}_{\eps}^{\lambda'}(s') \cdot \mathbf{N}_{\eps}^{\lambda}(s))
        (\mathbf{T}_{\eps}^{\lambda'}(s') \cdot \mathbf{T}_{\eps}^{\lambda}(s))
        \\&\quad\quad\quad\ 
        + \kappa_{\eps}^{\lambda'}(s')DK_{\eps}(z_{\eps}^{\lambda}(s) - z_{\eps}^{\lambda'}(s'))
        \left(
            (\mathbf{T}_{\eps}^{\lambda'}(s') \cdot \mathbf{T}_{\eps}^{\lambda}(s))^{2}
            - (\mathbf{T}_{\eps}^{\lambda'}(s') \cdot \mathbf{N}_{\eps}^{\lambda}(s))^{2}
        \right),
    \end{align*}
    we see that
    \begin{align*}
       G_{2} \leq C_{\alpha}\overline{\int_{\mathcal{L}}}
        \int_{\ell_{\eps}^{\lambda}\bbT\times\ell_{\eps}^{\lambda'}\bbT}
        \frac{\abs{\kappa_{\eps}^{\lambda}(s)\kappa_{\eps}^{\lambda'}(s')}}
        {\abs{z_{\eps}^{\lambda}(s) - z_{\eps}^{\lambda'}(s')}^{1+2\alpha}}
        \,ds\,ds'd|\mu|(\lambda').
    \end{align*}
    By the Schwarz inequality, the inner integral
    of the right-hand side is bounded by
    \begin{align*}
        \left(
            \int_{\ell_{\eps}^{\lambda}\bbT}\kappa_{\eps}^{\lambda}(s)^{2}
            \int_{\ell_{\eps}^{\lambda'}\bbT}\frac{ds'}
            {\abs{z_{\eps}^{\lambda}(s) - z_{\eps}^{\lambda'}(s')}^{1+2\alpha}}
            \,ds
        \right)^{1/2}
        \left(
            \int_{\ell_{\eps}^{\lambda'}\bbT}\kappa_{\eps}^{\lambda'}(s')^{2}
            \int_{\ell_{\eps}^{\lambda}\bbT}\frac{ds}
            {\abs{z_{\eps}^{\lambda}(s) - z_{\eps}^{\lambda'}(s')}^{1+2\alpha}}
            \,ds'
        \right)^{1/2},
    \end{align*}
    and Lemma~\ref{L3.5} with $\beta=\frac 12$ shows that this is bounded by
    \begin{align*}
        C_{\alpha}\left(
            \norm{z_{\eps}^{\lambda}}_{\dot{H}^{2}}^{2}
            \frac{\ell_{\eps}^{\lambda'}\norm{z_{\eps}^{\lambda'}}_{\dot{H}^{2}}^{2}}
            {(\Delta_{\eps}^{\lambda,\lambda'})^{2\alpha}}
        \right)^{1/2}
        \left(
            \norm{z_{\eps}^{\lambda'}}_{\dot{H}^{2}}^{2}
            \frac{\ell_{\eps}^{\lambda}\norm{z_{\eps}^{\lambda}}_{\dot{H}^{2}}^{2}}
            {(\Delta_{\eps}^{\lambda,\lambda'})^{2\alpha}}
        \right)^{1/2}
        = C_{\alpha}\norm{z_{\eps}^{\lambda}}_{\dot{H}^{2}}^{2}
        \frac{\ell_{\eps}^{\lambda'}\norm{z_{\eps}^{\lambda'}}_{\dot{H}^{2}}^{2} 
        (\ell_{\eps}^{\lambda})^{1/2}}
        {(\ell_{\eps}^{\lambda'})^{1/2}(\Delta_{\eps}^{\lambda,\lambda'})^{2\alpha}}.
    \end{align*}
    Therefore
    \begin{align*}
        \abs{G_{2}} \leq C_{\alpha}R_{\mu}(\Phi_{\eps*}\Theta)
        Q(\Phi_{\eps*}\Theta)\norm{z_{\eps}^{\lambda}}_{\dot{H}^{2}}^{2}.
    \end{align*}

    \textbf{Estimate for $G_{3}$.} We can assume $R_{\mu}(\Phi_{\eps*}\Theta)<\infty$,
    in which case $\Delta_{\eps}^{\lambda,\lambda'} > 0$
    for $|\mu|$-almost all $\lambda'$.  Thus we can apply \cite[Lemma~A.4]{JeoZla2}
    to conclude that
    \begin{equation}\label{3.100}
        G_{3} \leq C_{\alpha}\overline{\int_{\mathcal{L}}}
        \int_{\ell_{\eps}^{\lambda}\bbT\times \ell_{\eps}^{\lambda'}\bbT}
        \frac{\abs{\kappa_{\eps}^{\lambda}(s)}\left(
            \mathcal{M}\kappa_{\eps}^{\lambda}(s)
            + \mathcal{M}\kappa_{\eps}^{\lambda'}(s')
        \right)}
        {\abs{z_{\eps}^{\lambda}(s) - z_{\eps}^{\lambda'}(s')}^{1+2\alpha}}
        \,ds\,ds'd|\mu|(\lambda'),
    \end{equation}
    where $\mathcal{M}$ is the maximal operator given by
    \begin{equation*}
        \mathcal{M}f(s)\coloneqq \max\set{
            \sup_{h\in\left(0,\frac{\ell}{2}\right]}
            \frac{1}{h}\int_{s}^{s+h}\abs{f(s')}\,ds',
            \sup_{h\in\left(0,\frac{\ell}{2}\right]}
            \frac{1}{h}\int_{s-h}^{s}\abs{f(s')}\,ds'
        }
    \end{equation*}
    for $\ell\in(0,\infty)$ and $f\in L^{1}(\ell\bbT)$.
    Let us split the integrand of \eqref{3.100} into the sum of terms with numerators
    $\abs{\kappa_{\eps}^{\lambda}(s)}\mathcal{M}\kappa_{\eps}^{\lambda}(s)$ and
    $\abs{\kappa_{\eps}^{\lambda}(s)}\mathcal{M}\kappa_{\eps}^{\lambda'}(s')$.  Then
    the maximal inequality 
    \begin{equation*}
        \norm{\mathcal{M}f}_{L^{2}} \leq C\norm{f}_{L^{2}},
    \end{equation*}
    which holds for all $\ell\in(0,\infty)$ and $f\in L^{2}(\ell\bbT)$
    with some universal constant $C$, shows that  the same argument
    as in the estimate for $G_{2}$ bounds the integral of the second term by
    $C_{\alpha}R_{\mu}(\Phi_{\eps*}\Theta)Q(\Phi_{\eps*}\Theta)
    \norm{z_{\eps}^{\lambda}}_{\dot{H}^{2}}^{2}$.
    As for the first term, Lemma~\ref{L3.5} with $\beta=\frac 12$, Schwarz inequality, and
    the maximal inequality show that
    \begin{align*}
        &\overline{\int_{\mathcal{L}}}
        \int_{\ell_{\eps}^{\lambda}\bbT\times \ell_{\eps}^{\lambda'}\bbT}
        \frac{\abs{\kappa_{\eps}^{\lambda}(s)}\mathcal{M}\kappa_{\eps}^{\lambda}(s)}
        {\abs{z_{\eps}^{\lambda}(s) - z_{\eps}^{\lambda'}(s')}^{1+2\alpha}}
        \,ds\,ds'd|\mu|(\lambda')
        \\&\quad\quad \leq
        \overline{\int_{\mathcal{L}}}
        \int_{\ell_{\eps}^{\lambda}\bbT}\abs{\kappa_{\eps}^{\lambda}(s)}\mathcal{M}\kappa_{\eps}^{\lambda}(s)
        \frac{C_{\alpha}\ell_{\eps}^{\lambda'}\norm{z_{\eps}^{\lambda'}}_{\dot{H}^{2}}^{2}}
        {d(z_{\eps}^{\lambda}(s), \operatorname{im}(z_{\eps}^{\lambda'}))^{2\alpha}}
        \,ds\,d|\mu|(\lambda')
        \\&\quad\quad \leq
        C_{\alpha}Q(\Phi_{\eps*}\Theta)
        \int_{\mathcal{L}}
        \int_{\ell_{\eps}^{\lambda}\bbT}
        \frac{\abs{\kappa_{\eps}^{\lambda}(s)}\mathcal{M}\kappa_{\eps}^{\lambda}(s)}
        {d(z_{\eps}^{\lambda}(s), \operatorname{im}(z_{\eps}^{\lambda'}))^{2\alpha}}
        \,ds\,d|\mu|(\lambda')
        \\&\quad\quad \leq
        C_{\alpha}L_{\mu}(\Phi_{\eps*}\Theta)Q(\Phi_{\eps*}\Theta)
        \int_{\ell_{\eps}^{\lambda}\bbT}
        \abs{\kappa_{\eps}^{\lambda}(s)}\mathcal{M}\kappa_{\eps}^{\lambda}(s)\,ds
        \\&\quad\quad \leq C_{\alpha}L_{\mu}(\Phi_{\eps*}\Theta)Q(\Phi_{\eps*}\Theta)
        \norm{z_{\eps}^{\lambda}}_{\dot{H}^{2}}^{2}.
    \end{align*}
    Here, the regular (rather than upper) integral can be taken after the second inequality
    because the integrand is jointly measurable in $(s,\lambda')$
    (recall from \eqref{1.100} that $d(x,\operatorname{im}(z_{\eps}^{\lambda'}))
    = d(x,\Phi_{\eps}(\partial\Theta^{\lambda'}))$ is jointly measurable
    in $(x,\lambda')$), which was used in the following step when applying Fubini's theorem.
    Aggregating the estimates for $G_{2}$ and $G_{3}$ now yields the desired conclusion.
\end{proof}

Lemmas \ref{L3.1} and \ref{L3.7} suggest existence of an $\eps$-independent estimate
\begin{equation}\label{3.13}
    \max\set{\partial_{t}^{+}Q(\Phi_{\eps*}^{t}\Theta),
    -\partial_{t-}Q(\Phi_{\eps*}^{t}\Theta)}
    \leq C_{\alpha}(L_{\mu}(\Phi_{\eps*}^{t}\Theta) + R_{\mu}(\Phi_{\eps*}^{t}\Theta))
    Q(\Phi_{\eps*}^{t}\Theta)^{2},
\end{equation}
so that a Gr\"{o}nwall-type argument can be used to show that $Q(\Phi_{*}^{t}\Theta)$ is finite for all $t$ near 0.  However, to apply such an argument, we first need to prove that $Q(\Phi_{\eps*}^{t}\Theta)$ is bounded on some ($\eps$-independent) time interval $(-t_0,t_0)$, even if the bound depends on $\eps$, or that $Q(\Phi_{\eps*}^{t}\Theta)$ is upper semi-continuous for each $\eps>0$.
For instance, \eqref{3.11} would become useless if $Q(\Phi_{\eps*}^{t}\Theta)=\infty$ for all small $\eps,|t|>0$, and none of the above a priori excludes this.


It turns out that this problem can be overcome by estimating the second integral in \eqref{3.12} via the Schwarz inequality, provided 
$\sup_{\lambda\in\mathcal{L}}    \ell(z^{0,\lambda})<\infty$ (using also Lemma \ref{L3.1}).  Moreover, we will now show that one can similarly deal with general $\theta^0$ considered in this section via a sequence of approximations satisfying this property.
Take an increasing sequence $\set{\mathcal{L}_{N}'}_{N=1}^{\infty}$ of
measurable subsets of $\mathcal{L}$ such that $|\mu|(\mathcal{L}_{N}')<\infty$
for each $N\in\bbN$ and $\mathcal{L} = \bigcup_{N=1}^{\infty}\mathcal{L}_{N}'$.
For $\theta^0$ as above and any $N\in\bbN$, let
\[
    \mathcal{L}_{N}\,\coloneqq \set{\lambda\in\mathcal{L}_N'\colon
    \ell(z^{0,\lambda}) \leq N},\quad
    \Theta_{N}\coloneqq \Theta\cap(\bbR^{2}\times\mathcal{L}_{N}),\quad
    \theta_{N}^{0}(x)\coloneqq
    \int_{\mathcal{L}_{N}}\mathbbm{1}_{\Theta^{\lambda}}(x)\,d\mu(\lambda).
\]
Then $\theta_{N}^{0} \in L^{1}(\bbR^{2})\cap L^{\infty}(\bbR^{2})$ because
$\norm{\theta_{N}^{0}}_{L^{\infty}} \leq |\mu|(\mathcal{L}_N')$ and
 $\norm{\theta_{N}^{0}}_{L^{1}} \leq \frac{N^{2}}{4\pi}|\mu|(\mathcal{L}_N')$ (the latter
by the isoperimetric inequality), and clearly also $L_{\mu}(\Theta_{N}) \leq L_{\mu}(\Theta)$.
Let $\Phi_{\eps,N}\in C_{\rm loc}\left(\bbR;C(\bbR^{2};\bbR^{2})\right)$ be the corresponding
$\eps$-mollified flow map, that is, the identity map plus the solution to \eqref{2.1}
with $\theta_{N}^{0}$ in place of $\theta^{0}$.
Let $z_{\eps,N}^{t,\lambda}\coloneqq \Phi_{\eps,N}^{t}\circ z^{0,\lambda}$
and $\theta_{\eps,N}^{t} \coloneqq \theta_{N}^{0} \circ (\Phi_{\eps,N}^{t})^{-1}$.

Then \eqref{3.12}, the Schwarz inequality,
$\ell(\gamma)\norm{\gamma}_{\dot{H}^{2}}^{2} \geq 4$
for any $\gamma\in\operatorname{CC}(\bbR^{2})$
(see \cite[Lemma~A.1]{JeoZla2}), Lemma \ref{L3.1}, and \eqref{1.8} show that
\begin{align*}
    \abs{\partial_{t}\norm{z_{\eps,N}^{t,\lambda}}_{\dot{H}^{2}}^{2}}
    &\leq 5\norm{u_{\eps}(\theta_{\eps,N}^{t})}_{\dot{C}^{0,1}}
    \norm{z_{\eps,N}^{t,\lambda}}_{\dot{H}^{2}}^{2}
    + 2\norm{D^{2}(u_{\eps}(\theta_{\eps,N}^{t}))}_{L^{\infty}}
    \ell(z_{\eps,N}^{t,\lambda})^{1/2}\norm{z_{\eps,N}^{t,\lambda}}_{\dot{H}^{2}} \\
    &\leq \left(
        5\norm{D(\nabla^{\perp}K_{\eps})}_{L^{\infty}}
        + C_{\alpha}L_{\mu}(\Theta) N \norm{D^{2}(\nabla^{\perp}K_{\eps})}_{L^{\infty}}
    \right) \norm{\theta_{N}^{0}}_{L^{1}}
    \norm{z_{\eps,N}^{t,\lambda}}_{\dot{H}^{2}}^{2}
\end{align*}
for $(t,\lambda)\in[-T_0,T_0] \times\mathcal{L}_{N}$.  This implies
\[
    \norm{z_{\eps,N}^{t+h,\lambda}}_{\dot{H}^{2}}^{2}
    \leq e^{C\abs{h}}\norm{z_{\eps,N}^{t,\lambda}}_{\dot{H}^{2}}^{2}
\]
whenever $(t,t+h,\lambda)\in [-T_0,T_0]^2\times\mathcal{L}_{N}$, with some 
$C$ depending  on $\alpha,\eps,N,\Theta,\mu$. This and Lemma \ref{L3.1} now yield
\[
    Q(\Phi_{\eps,N*}^{t+h}\Theta_{N}) \leq e^{C\abs{h}}Q(\Phi_{\eps,N*}^{t}\Theta_{N})
\]
for the same $(t,h,\lambda)$, with a new $C$ depending again only on $\alpha,\eps,N,\Theta,\mu$, 
from which we conclude the  $Q(\Phi_{\eps,N*}^{t}\Theta_{N})$ is upper semi-continuous in $t\in[-T_0,T_0]$ for all $N,\eps$ (this then clearly extends to all $t\in\bbR$).

Lemmas \ref{L3.1}, \ref{L3.4}, \ref{L3.7}, and Corollary~\ref{C2.7} with
$(\theta_{\eps,N},\Theta_{N},z_{\eps,N},\Phi_{\eps,N})$ in place of
$(\theta_{\eps},\Theta,z_{\eps}, \Phi_{\eps})$,
and inequalities $L_{\mu}(\Theta_{N})\leq L_{\mu}(\Theta)$,
$R_{\mu}(\Theta_{N})\leq R_{\mu}(\Theta)$, and $Q(\Phi_{\eps,N*}^{t}\Theta_{N}) \geq 4$, show that
\begin{align*}
    \ell(z_{\eps,N}^{t+h,\lambda})\norm{z_{\eps,N}^{t+h,\lambda}}_{\dot{H}^{2}}^{2}
    - Q(\Phi_{\eps,N*}^{t}\Theta_{N})
    &\leq \ell(z_{\eps,N}^{t+h,\lambda})\norm{z_{\eps,N}^{t+h,\lambda}}_{\dot{H}^{2}}^{2}
    - \ell(z_{\eps,N}^{t,\lambda})\norm{z_{\eps,N}^{t,\lambda}}_{\dot{H}^{2}}^{2} \\
    &\leq C_{\alpha}\abs{\int_{t}^{t+h}
    (L_{\mu}(\Theta) + R_{\mu}(\Theta))Q(\Phi_{\eps,N*}^{\tau}\Theta_{N})^{2}\,d\tau}
\end{align*}
holds whenever $(t,t+h,\lambda)\in[-T_{0},T_{0}]^{2}\times\mathcal{L}_{N}$
(with $C_{\alpha}$ depending only on $\alpha$, as always).
Using upper semi-continuity of $Q(\Phi_{\eps,N*}^{t}\Theta_{N})$ in $t$,
taking the supremum over $\lambda\in\mathcal{L}_{N}$,
dividing by $\abs{h}$, and letting $h\to 0$ now yields
\[
    \max\set{
        \partial_{t}^{+}Q(\Phi_{\eps,N*}^{t}\Theta_{N}),
        -\partial_{t-}Q(\Phi_{\eps,N*}^{t}\Theta_{N})
    } \leq C_{\alpha}(L_{\mu}(\Theta) + R_{\mu}(\Theta))Q(\Phi_{\eps,N*}^{t}\Theta_{N})^{2},
\]
so via a Gr\"{o}nwall-type argument we conclude that
\begin{equation}\label{3.14}
    Q(\Phi_{\eps,N*}^{t}\Theta_{N}) \leq \frac{Q(\Theta_{N})}
    {1 - C_{\alpha}(L_{\mu}(\Theta) + R_{\mu}(\Theta))Q(\Theta_{N})\abs{t}}
    \leq \frac{Q(\Theta)}
    {1 - C_{\alpha}(L_{\mu}(\Theta) + R_{\mu}(\Theta))Q(\Theta)\abs{t}}
\end{equation}
whenever $\abs{t} < T_{1}\coloneqq
\frac{1}{C_{\alpha}(L_{\mu}(\Theta) + R_{\mu}(\Theta))Q(\Theta)}$.
We let this $C_\alpha$ be no smaller than the one from Lemma \ref{L2.1} (which was used to define $T_0$), so that $T_1\in(0, T_0]$.

The following result will allow us to turn \eqref{3.14} into a bound on $Q(\Phi_{*}^{t}\Theta)$.

\begin{lemma}\label{L3.8}
   With $T_{1}$ as above, for each $\eps>0$ we have
    \[
        \lim_{N\to\infty}\sup_{t\in[-T_{1},T_{1}]}\norm{
            \Phi_{\eps,N}^{t} - \Phi_{\eps}^{t}
        }_{L^{\infty}} = 0.
    \]
\end{lemma}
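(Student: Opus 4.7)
The plan is to derive a Grönwall bound on $d_{N}(t):=\|\Phi_{\eps,N}^{t}-\Phi_{\eps}^{t}\|_{L^{\infty}}$ whose forcing term is proportional to $\|\theta_{N}^{0}-\theta^{0}\|_{L^{1}}$, and then to argue that this $L^{1}$ difference vanishes as $N\to\infty$. Subtracting the integrated flow equations obtained from \eqref{2.1} for the two initial data gives
\[
    \Phi_{\eps,N}^{t} - \Phi_{\eps}^{t}
    = \int_{0}^{t}\bigl(u_{\eps}(\theta_{\eps,N}^{\tau})\circ\Phi_{\eps,N}^{\tau}
    - u_{\eps}(\theta_{\eps}^{\tau})\circ\Phi_{\eps}^{\tau}\bigr)\,d\tau,
\]
and I would split the integrand into $[u_{\eps}(\theta_{\eps,N}^{\tau})-u_{\eps}(\theta_{\eps}^{\tau})]\circ\Phi_{\eps,N}^{\tau}$ plus $u_{\eps}(\theta_{\eps}^{\tau})\circ\Phi_{\eps,N}^{\tau}-u_{\eps}(\theta_{\eps}^{\tau})\circ\Phi_{\eps}^{\tau}$. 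By Lemma~\ref{L2.1} and Corollary~\ref{C2.7}, the $L^{\infty}$ norm of the second summand is at most $C_{\alpha}L_{\mu}(\Phi_{\eps*}^{\tau}\Theta)\,d_{N}(\tau)\leq 2C_{\alpha}L_{\mu}(\Theta)\,d_{N}(\tau)$ for $|\tau|\leq T_{1}\leq T_{0}$.

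For the first summand, since both $\Phi_{\eps}^{\tau}$ and $\Phi_{\eps,N}^{\tau}$ are measure-preserving, the substitutions $y=\Phi_{\eps,N}^{\tau}(z)$ and $y=\Phi_{\eps}^{\tau}(z)$ let me rewrite
\begin{align*}
    u_{\eps}(\theta_{\eps,N}^{\tau};x) - u_{\eps}(\theta_{\eps}^{\tau};x)
    &= \int_{\bbR^{2}}\bigl[\nabla^{\perp}K_{\eps}(x-\Phi_{\eps,N}^{\tau}(y)) - \nabla^{\perp}K_{\eps}(x-\Phi_{\eps}^{\tau}(y))\bigr]\theta_{N}^{0}(y)\,dy \\
    &\quad +\int_{\bbR^{2}}\nabla^{\perp}K_{\eps}(x-\Phi_{\eps}^{\tau}(y))\bigl[\theta_{N}^{0}(y)-\theta^{0}(y)\bigr]\,dy.
\end{align*}
Since $\|\nabla^{\perp}K_{\eps}\|_{L^{\infty}}$ and $\|D(\nabla^{\perp}K_{\eps})\|_{L^{\infty}}$ are finite (with $\eps$-dependent bounds), and $\|\theta_{N}^{0}\|_{L^{1}}\leq (m\times|\mu|)(\Theta)$ uniformly in $N$, the two summands on the right are bounded in $L^{\infty}$ by $C(\eps)\,d_{N}(\tau)$ and $C(\eps)\|\theta_{N}^{0}-\theta^{0}\|_{L^{1}}$ respectively, with constants also allowed to depend on $\alpha,\Theta,\mu$. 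Combining everything yields an $N$-independent estimate of the form $d_{N}(t)\leq \int_{0}^{|t|}\bigl(A\,d_{N}(\tau)+B\|\theta_{N}^{0}-\theta^{0}\|_{L^{1}}\bigr)\,d\tau$ on $[-T_{1},T_{1}]$, and since $d_{N}(0)=0$ a standard Grönwall argument gives $\sup_{|t|\leq T_{1}}d_{N}(t)\leq C'\|\theta_{N}^{0}-\theta^{0}\|_{L^{1}}$ with $C'$ independent of $N$.

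It then remains to show $\|\theta_{N}^{0}-\theta^{0}\|_{L^{1}}\to 0$ as $N\to\infty$, which I would do by dominated convergence: one has the pointwise bound $|\theta^{0}(x)-\theta_{N}^{0}(x)|\leq \int_{\mathcal{L}\setminus\mathcal{L}_{N}}\mathbbm{1}_{\Theta^{\lambda}}(x)\,d|\mu|(\lambda)$, the nesting $\mathcal{L}_{N}\uparrow\mathcal{L}$ holds because $\mathcal{L}_{N}'\uparrow\mathcal{L}$ and $\ell(z^{0,\lambda})<\infty$ for every individual $\lambda\in\mathcal{L}$ (each $z^{0,\lambda}$ being an $H^{2}$ curve), and the dominating function $\int_{\mathcal{L}}\mathbbm{1}_{\Theta^{\lambda}}(x)\,d|\mu|(\lambda)$ integrates to $(m\times|\mu|)(\Theta)<\infty$ by the finiteness condition in Definition~\ref{D1.0}. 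The main (mild) technical point is to ensure the change of variables is applied rigorously and that all constants in the Grönwall step are $N$-independent; no new analytic ideas beyond those already used in Section~2 are required.
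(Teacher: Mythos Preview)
Your proof is correct and follows essentially the same Gr\"onwall-plus-$L^{1}$-convergence strategy as the paper. The only notable difference is that where the paper invokes Lemma~\ref{L2.2} (together with the bound on $L_{\mu}(\Phi_{\eps,N*}^{t}\Theta_{N})$ and $L_{\mu}(\Phi_{\eps*}^{t}\Theta_{N})$) to control $\|u_{\eps}(\Phi_{\eps,N*}^{t}\theta_{N}^{0})-u_{\eps}(\Phi_{\eps*}^{t}\theta_{N}^{0})\|_{L^{\infty}}$ with $\eps$-independent constants, you instead use the cruder bound $\|D(\nabla^{\perp}K_{\eps})\|_{L^{\infty}}\|\theta_{N}^{0}\|_{L^{1}}\,d_{N}(\tau)$ coming directly from smoothness of the mollified kernel; since $\eps$ is fixed throughout the lemma this is perfectly adequate and in fact slightly more elementary.
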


\begin{proof}
    Since $L_{\mu}(\Theta_{N}) \leq L_{\mu}(\Theta)$ for all $N$,
 the last claim in Proposition~\ref{P2.8} and \eqref{1.7} show that
    \[
        M\coloneqq \sup_{(t,N)\in[-T_{1},T_{1}]\times\bbN}
        \max\set{L_{\mu}(\Phi_{\eps,N*}^{t}\theta_{N}^{0}),
        L_{\mu}(\Phi_{\eps*}^{t}\theta_{N}^{0})}
        \leq e^{2\alpha}L_{\mu}(\Theta_{N}) 
        \leq 3L_{\mu}(\Theta).
    \]
    Then  Lemmas \ref{L2.1} and \ref{L2.2} show for any $t\in[-T_{1},T_{1}]$ that
    \begin{equation}\label{3.150}\begin{aligned}
        &\norm{u_{\eps}(\Phi_{\eps,N*}^{t}\theta_{N}^{0})\circ\Phi_{\eps,N}^{t}
        - u_{\eps}(\Phi_{\eps*}^{t}\theta^{0})\circ\Phi_{\eps}^{t}}_{L^{\infty}}
        \\&\quad\quad
        \leq \norm{u_{\eps}(\Phi_{\eps,N*}^{t}\theta_{N}^{0})}_{\dot{C}^{0,1}}
        \norm{\Phi_{\eps,N}^{t} - \Phi_{\eps}^{t}}_{L^{\infty}}
        + \norm{u_{\eps}(\Phi_{\eps,N*}^{t}\theta_{N}^{0})
        - u_{\eps}(\Phi_{\eps*}^{t}\theta_{N}^{0})}_{L^{\infty}}
        \\&\qquad\qquad\qquad\qquad
        + \norm{u_{\eps}(\Phi_{\eps*}^{t}\theta_{N}^{0})
        - u_{\eps}(\Phi_{\eps*}^{t}\theta^{0})}_{L^{\infty}}
        \\&\quad\quad
        \leq C_{\alpha}M\norm{\Phi_{\eps,N}^{t} - \Phi_{\eps}^{t}}_{L^{\infty}}
        + \norm{u_{\eps}(\Phi_{\eps*}^{t}\theta_{N}^{0})
        - u_{\eps}(\Phi_{\eps*}^{t}\theta^{0})}_{L^{\infty}}
    \end{aligned}\end{equation}
    for some $C_{\alpha}$.  We see that $\theta_{N}^{0} \to \theta^{0}$ in $L^{1}$ because
 $\Theta\subseteq \bbR^{2}\times\mathcal L$ has finite measure (with the Lebesgue measure on $\bbR^{2}$
    and $\abs{\mu}$ on $\mathcal L$).
    So for any $\eta>0$ there is $t$-independent $N_{\eta}\in\bbN$  such that for all $N\geq N_{\eta}$ we have
    \begin{align*}
        \norm{u_{\eps}(\Phi_{\eps*}^{t}\theta_{N}^{0})
        - u_{\eps}(\Phi_{\eps*}^{t}\theta^{0})}_{L^{\infty}}
        &= \sup_{x\in\bbR^{2}}\abs{\int_{\bbR^{2}}
        \nabla^{\perp}K_{\eps}(x - \Phi_{\eps}^{t}(y))
        \left(\theta_{N}^{0}(y) - \theta^{0}(y)\right)dy} \\
        &\leq \norm{\nabla^{\perp}K_{\eps}}_{L^{\infty}}
        \norm{\theta_{N}^{0} - \theta^{0}}_{L^{1}} \leq \eta.
    \end{align*}
     Since
    $\norm{\Phi_{\eps,N}^{t}-\Phi_{\eps}^{t}}_{L^{\infty}}$
    is continuous in $t$, \eqref{3.150} yields
    \[
        \max\set{
            \partial_{t}^{+}\norm{\Phi_{\eps,N}^{t} - \Phi_{\eps}^{t}}_{L^{\infty}},
            -\partial_{t-}\norm{\Phi_{\eps,N}^{t} - \Phi_{\eps}^{t}}_{L^{\infty}},
        } \leq C_{\alpha}M\norm{\Phi_{\eps,N}^{t} - \Phi_{\eps}^{t}}_{L^{\infty}} + \eta
    \]
    for all $t\in[-T_{1},T_{1}]$ and  $N\ge N_\eta$.
    Hence, a Gr\"{o}nwall-type argument shows that
    \[
        \norm{\Phi_{\eps,N}^{t} - \Phi_{\eps}^{t}}_{L^{\infty}}
        \leq \begin{cases}
            \frac{\eta}{C_{\alpha}M}\left(e^{C_{\alpha}M\abs{t}} - 1\right)
            & \textrm{if $M>0$}, \\
            \eta\abs{t} & \textrm{if $M=0$}
        \end{cases}
    \]
    for all such $(t,N)$, from which the claim follows.
\end{proof}

Since $\mathcal{L} = \bigcup_{N=1}^{\infty}\mathcal{L}_{N}$,
Lemma~\ref{L3.8} shows that $\lim_{N\to\infty}z_{\eps,N}^{t,\lambda}= z_{\eps}^{t,\lambda}$ in  $\operatorname{CC}(\bbR^{2})$
for any $\eps>0$ and $(t,\lambda)\in(-T_{1},T_{1})\times\mathcal{L}$.
Lower semi-continuity of $\ell(\cdot)$ and $\norm{\cdot}_{\dot{H}^{2}}$ on $\operatorname{CC}(\bbR^{2})$
(the latter by \cite[Corollary~B.3]{JeoZla2}) and \eqref{3.14} show that
\[
    \ell(z_{\eps}^{t,\lambda})\norm{z_{\eps}^{t,\lambda}}_{\dot{H}^{2}}^{2}
    \leq \frac{Q(\Theta)}{1 - C_{\alpha}(L_{\mu}(\Theta) + R_{\mu}(\Theta))Q(\Theta)\abs{t}}
\]
holds for these $(\eps,t,\lambda)$.
After taking $\eps\to 0$ and using the same lower semi-continuity again, we obtain the same bound with $z^{t,\lambda}$ in place of $z_{\eps}^{t,\lambda}$.  It follows that
\[
    Q(\Phi_{*}^{t}\Theta)
    \leq \frac{Q(\Theta)}{1 - C_{\alpha}(L_{\mu}(\Theta) + R_{\mu}(\Theta))Q(\Theta)\abs{t}}
\]
for all $t\in(-T_{1},T_{1})$.  Since $L_{\mu}(\Phi^t_*\Theta)$ and  $R_{\mu}(\Phi^t_*\Theta)$ are bounded on compact subsets of $I$, an analogous bound holds for all $t$ near any $\tau\in I$ with $Q(\Phi^\tau_*\Theta)<\infty$ (with $(\Phi^\tau_*\Theta,|t-\tau|)$ in place of $(\Theta,|t|)$).  This proves Theorem~\ref{T1.4}(ii).

\section{Proof of Theorem~\ref{T1.4}(iii)}\label{S4}

Once again, all constants $C_{\alpha}$ below
can change from one inequality to another, but they always only depend  on $\alpha$.
Let 
\[
\Sigma(\Theta):= \inf_{\lambda\in\mathcal{L}}\abs{\Theta^{\lambda}}>0 \qquad\text{and}\qquad \Lambda(\Theta) \coloneqq \int_{\mathcal{L}}\ell(\partial\Theta^{\lambda})\,d|\mu|(\lambda)<\infty.
\]
We now also assume that $\alpha\in\left(0,\frac{1}{6}\right]$,
$|\mu|(\mathcal L)<\infty$, and  $T$ is an endpoint of the interval $I'$ with $|T|<\infty$
 and $\limsup_{t\to T}Q(\Phi_{*}^{t}\Theta) < \infty$.  We let  $S:=\sup_{t\in[0,T)}Q(\Phi_{*}^{t}\Theta) < \infty$ and will then obtain a contradiction.

Since $\ell(\cdot)$ and $\norm{\cdot}_{\dot{H}^{2}}$ are lower semi-continuous
on $\operatorname{CC}(\bbR^{2})$ (see the last paragraph of Section~\ref{S3}),  $Q(\Phi_{*}^{t}\Theta)$ is lower semi-continuous in $t$.
So if $T$ is not an endpoint of $I$, we obtain $Q(\Phi_{*}^{T}\Theta)<\infty$ and hence a contradiction with Theorems \ref{T1.2} and \ref{T1.4}(ii).  Therefore 
 $T$ is an endpoint of $I$, and the desired contradiction will follow once we
show  $\limsup_{t\to T}L_{\mu}(\Phi_{*}^{t}\Theta) < \infty$.
We can assume that $T>0$ without loss because the other case is identical.

\begin{lemma}\label{L4.1}
For some $C_\alpha$ and all $(t,\lambda)\in [0,T)\times\mathcal L$ we have
    \begin{equation}\label{4.1}
        \ell(z^{t,\lambda}) \leq
        S \left(
            \frac{\ell(z^{0,\lambda})}{2} +
            C_{\alpha}T\left(
                \norm{\theta^{0}}_{L^{1}} + \norm{\theta^{0}}_{L^{\infty}}
            \right)
        \right) .
    \end{equation}
\end{lemma}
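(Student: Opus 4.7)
The plan is to derive an evolution formula for $\ell(z^{t,\lambda})$, bound its derivative via Cauchy-Schwarz using the hypothesis $Q(\Phi^t_*\Theta)\le S$, bound $\|u(\theta^t)\|_{L^\infty}$ by a standard interpolation inequality, and integrate from $0$ to $t$. Concretely, I would first establish
\[
\partial_t \ell(z^{t,\lambda}) \;=\; -\int_{\ell(z^{t,\lambda})\bbT} u(\theta^t)(z^{t,\lambda}(s))\cdot \kappa^{t,\lambda}(s)\,\mathbf{N}^{t,\lambda}(s)\, ds
\]
for $t\in[0,T)$. The natural route is to first derive the $\eps$-regularized identity $\partial_t \ell_\eps^{t,\lambda}=\int \partial_s u_\eps^{t,\lambda}\cdot\mathbf{T}_\eps^{t,\lambda}\,ds$ (as in the proof of Lemma~\ref{L3.1}), integrate by parts on the closed curve to rewrite it as $-\int u_\eps^{t,\lambda}\cdot \kappa_\eps^{t,\lambda}\mathbf{N}_\eps^{t,\lambda}\,ds$, and pass to the limit $\eps\to 0$. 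This limit makes sense since $z^{t,\lambda}$ is an $H^2$ curve for $t\in[0,T)$ (from $\ell(z^{t,\lambda})\,\|z^{t,\lambda}\|_{\dot H^2}^2\le Q(\Phi^t_*\Theta)\le S$), $\Phi^t$ is bi-Lipschitz by Proposition~\ref{P2.8}, and $u(\theta^t)$ is Lipschitz by Lemma~\ref{L2.1}.

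Given the identity, Cauchy-Schwarz yields
\[
|\partial_t \ell(z^{t,\lambda})|\le \|u(\theta^t)\|_{L^\infty}\int_{\ell(z^{t,\lambda})\bbT}|\kappa^{t,\lambda}(s)|\,ds\le \|u(\theta^t)\|_{L^\infty}\sqrt{\ell(z^{t,\lambda})\|z^{t,\lambda}\|_{\dot H^2}^2}\le \|u(\theta^t)\|_{L^\infty}\sqrt{S}.
\]
To handle $\|u(\theta^t)\|_{L^\infty}$, I would dominate $|u(\theta^t;x)|$ by $c_\alpha\int |\theta^t(y)|\,|x-y|^{-1-2\alpha}\,dy$, split the integral at the ball of radius $r$, and optimize $r\sim\sqrt{\|\theta^t\|_{L^1}/\|\theta^t\|_{L^\infty}}$ to obtain $\|u(\theta^t)\|_{L^\infty}\le C_\alpha\|\theta^t\|_{L^1}^{(1-2\alpha)/2}\|\theta^t\|_{L^\infty}^{(1+2\alpha)/2}$. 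Since $\Phi^t$ is measure-preserving, $\|\theta^t\|_{L^p}=\|\theta^0\|_{L^p}$ for $p\in\{1,\infty\}$, and weighted AM-GM (the two exponents sum to $1$) yields the cleaner estimate $\|u(\theta^t)\|_{L^\infty}\le C_\alpha(\|\theta^0\|_{L^1}+\|\theta^0\|_{L^\infty})$.

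Integrating this differential inequality from $0$ to any $t\in[0,T)$ produces $\ell(z^{t,\lambda})\le \ell(z^{0,\lambda})+C_\alpha T\sqrt{S}(\|\theta^0\|_{L^1}+\|\theta^0\|_{L^\infty})$. Since $\ell(\gamma)\|\gamma\|_{\dot H^2}^2\ge 4$ for any $\gamma\in\operatorname{PSC}(\bbR^2)$ (by \cite[Lemma~A.1]{JeoZla2}), we have $S\ge 4$, hence $1\le S/2$ and $\sqrt{S}\le S/2$, from which \eqref{4.1} follows at once. The main technical obstacle is justifying the limiting evolution formula: one must commute $\partial_t$ and $\partial_\xi$ for $\tilde z^{t,\lambda}$ (which is $H^2$ in $\xi$ at each $t$ and $C^1$ in $t$ in $L^\infty(\bbT;\bbR^2)$) and integrate by parts with only $H^2$ regularity in $s$. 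This can be handled either by differentiating the integrated flow relation $\tilde z^{t+h,\lambda}=\tilde z^{t,\lambda}+\int_t^{t+h} u(\theta^\tau)\circ \tilde z^{\tau,\lambda}\,d\tau$ in $\xi$ (valid because both $u$ and $\tilde z^{\tau,\lambda}$ are Lipschitz) to deduce the pointwise identity $\partial_t|\partial_\xi\tilde z^{t,\lambda}|=Du(\theta^t)(\tilde z^{t,\lambda})(\mathbf{T}^{t,\lambda},\mathbf{T}^{t,\lambda})|\partial_\xi\tilde z^{t,\lambda}|$ and reparametrizing by arc-length, or by working throughout at the $\eps$-level and passing to the limit using lower semi-continuity of $\ell$ and $\|\cdot\|_{\dot H^2}$.
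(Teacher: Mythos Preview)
Your proof is correct but follows a genuinely different route from the paper's. The paper never writes down an evolution equation for $\ell(z^{t,\lambda})$ at the non-regularized level; instead it uses the elementary bound $\ell(\gamma)\le \operatorname{diam}(\operatorname{im}(\gamma))^2\|\gamma\|_{\dot H^2}^2\le \operatorname{diam}(\operatorname{im}(\gamma))\cdot\ell(\gamma)\|\gamma\|_{\dot H^2}^2$ (integration by parts and Cauchy--Schwarz on the arclength parametrization), reducing the problem to bounding the growth of $\operatorname{diam}(\operatorname{im}(z^{t,\lambda}))$. That is immediate: for any $x,y\in\operatorname{im}(z^{0,\lambda})$ one has $|\Phi^{t+h}(x)-\Phi^{t+h}(y)|-|\Phi^t(x)-\Phi^t(y)|\le 2\int_t^{t+h}\|u(\theta^\tau)\|_{L^\infty}\,d\tau$, whence $\partial_t^+\operatorname{diam}\le C_\alpha(\|\theta^0\|_{L^1}+\|\theta^0\|_{L^\infty})$, and combining with $\operatorname{diam}(\operatorname{im}(z^{0,\lambda}))\le\ell(z^{0,\lambda})/2$ gives \eqref{4.1}. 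Your route via $\partial_t\ell=-\int u\cdot\kappa\mathbf N\,ds$ reaches the same endpoint but pays a technical price you rightly flag: the identity must be justified for the actual flow, where $\Phi^t$ is only bi-Lipschitz. Note that your second suggested justification (work at the $\eps$-level throughout) is problematic here, since the hypothesis bounds $Q(\Phi_*^t\Theta)$ rather than $Q(\Phi_{\eps*}^t\Theta)$, and the latter is only controlled on short time intervals whose length depends on $L_\mu,R_\mu$ at the starting time---quantities that may blow up as $t\to T$. Your first suggestion (differentiate the integrated flow relation) does go through since $\|u(\theta^t)\|_{\dot C^{0,1}}$ is locally bounded on $[0,T)$. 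The paper's diameter argument sidesteps this issue entirely and, as a bonus, yields the diameter bound \eqref{4.2} used elsewhere (see the Remark after Theorem~\ref{T1.4}).
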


\begin{proof}
    Integration by parts and the Schwarz inequality show that for any
    rectifiable $\gamma\in\operatorname{CC}(\bbR^{2})$ we have
    \[
        \ell(\gamma) \leq \operatorname{diam}(\operatorname{im}(\gamma))^{2}
        \norm{\gamma}_{\dot{H}^{2}}^{2} 
        \le \operatorname{diam}(\operatorname{im}(\gamma))\, \ell(\gamma) \norm{\gamma}_{\dot{H}^{2}}^{2} .
    \]
    Then it is enough to prove that for all $(t,\lambda)\in [0,T)\times\mathcal L$ we have
    \begin{equation}\label{4.2}
        \partial_{t}^{+}\operatorname{diam}(\operatorname{im}(z^{t,\lambda}))
        \leq C_{\alpha}\left(\norm{\theta^{0}}_{L^{1}} + \norm{\theta^{0}}_{L^{\infty}}\right)
    \end{equation}
    because then this and $\operatorname{diam}(\operatorname{im}(\gamma)) \leq \frac{\ell(\gamma)}2$
    yield  \eqref{4.1}.

    Let $0\leq t\leq t+h<T$ and pick any $x,y\in\operatorname{im}(z^{0,\lambda})$
    such that $\abs{\Phi^{t+h}(x) - \Phi^{t+h}(y)}
    = \operatorname{diam}(\operatorname{im}(z^{t+h,\lambda}))$. Then clearly
    \begin{equation*} 
    \begin{aligned}
        \operatorname{diam}(\operatorname{im}(z^{t+h,\lambda}))
        - \operatorname{diam}(\operatorname{im}(z^{t,\lambda}))
        &\leq \abs{\Phi^{t+h}(x) - \Phi^{t+h}(y)} - \abs{\Phi^{t}(x) - \Phi^{t}(y)} \\
        &\leq \int_{t}^{t+h} \left(\abs{u(\theta^{\tau};\Phi^{\tau}(x))}
        + \abs{u(\theta^{\tau};\Phi^{\tau}(y))} \right)d\tau,
    \end{aligned}\end{equation*}
    and for any $\tau\in[0,T)$ we have
    \begin{equation*}
        \norm{u(\theta^{\tau})}_{L^{\infty}}
        \leq C_{\alpha}\left(
            \norm{\theta^{\tau}}_{L^{1}} + \norm{\theta^{\tau}}_{L^{\infty}}
        \right)
        = C_{\alpha}\left(
            \norm{\theta^{0}}_{L^{1}} + \norm{\theta^{0}}_{L^{\infty}}
        \right)
    \end{equation*}
    for some $C_{\alpha}$ since $\Phi^{\tau}$ is measure-preserving.
    This now indeed yields \eqref{4.2}.
\end{proof}

Next,
the isoperimetric inequality and $\Phi^{t}$ being measure-preserving show that
\begin{align*}
    M\coloneqq \sup_{t\in[0,T)}\sup_{\lambda\in\mathcal{L}}
    \norm{z^{t,\lambda}}_{\dot{H}^{2}}^{2}
    & \leq \sup_{t\in[0,T)}\frac{Q(\Phi_{*}^{t}\Theta)}
    {\inf_{\lambda\in\mathcal{L}}\ell(z^{t,\lambda})}
     \leq \sup_{t\in[0,T)}\frac{Q(\Phi_{*}^{t}\Theta)}{2\sqrt{\pi}\,\Sigma(\Phi_{*}^{t}\Theta)^{1/2}}
    \\ &    = \sup_{t\in[0,T)}\frac{Q(\Phi_{*}^{t}\Theta)}{2\sqrt{\pi}\,\Sigma(\Theta)^{1/2}} < \infty.
\end{align*}
For any $\eta>0$ and any $\tilde{\Theta}\subseteq\bbR^{2}\times\mathcal{L}$
from the product $\sigma$-algebra define
\[
    L_{\mu}^{\eta}(\tilde{\Theta}) \coloneqq
    \sup_{x\in\bbR^{2}}\int_{\mathcal{L}}\frac{d|\mu|(\lambda)}
    {\left(d(x,\partial\tilde{\Theta}^{\lambda}) + \eta\right)^{2\alpha}}.
\]
Then from $\Phi-{\rm Id}\in C\left([0,T);BC(\bbR^{2};\bbR^{2})\right)$ (see Proposition~\ref{P2.8}) we see that
$L_{\mu}^{\eta}(\Phi_{*}^{t}\Theta)$ is continuous in $t\in[0,T)$.
Since $\lim_{\eta\to 0^{+}} L_{\mu}^{\eta}(\tilde{\Theta}) = L_{\mu}(\tilde{\Theta})$,
 it will suffice to obtain the following $\eta$-independent upper bound on
$L_{\mu}^{\eta}(\Phi_{*}^{t}\Theta)$.

\begin{lemma}\label{L4.2}
    There is $C_{\alpha}$ such that for any $\eta>0$ and any $t\in[0,T)$ we have
    \begin{equation}\label{4.4}
        \partial_{t}^{+}L_{\mu}^{\eta}(\Phi_{*}^{t}\Theta)
        \leq C_{\alpha}SM^{1+2\alpha}\left(
            \Lambda(\Theta) + T|\mu|(\mathcal L) \left(   \norm{\theta^{0}}_{L^{1}} +  \norm{\theta^{0}}_{L^{\infty}} \right)
        \right)
        L_{\mu}^{\eta}(\Phi_{*}^{t}\Theta).
    \end{equation}
\end{lemma}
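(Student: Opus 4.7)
The strategy mirrors the proof of Proposition~\ref{P2.6}, with the essential change that its use of Lemma~\ref{L2.1} (which bounds $\|u(\theta^t)\|_{\dot C^{0,1}}$ in terms of $L_\mu$) is replaced by a bound on the velocity difference in terms that do not involve $L_\mu$---otherwise the argument would be circular, since boundedness of $L_\mu$ is precisely what we want to conclude.  For fixed $x\in\bbR^2$, $\lambda\in\mathcal L$, and $t\in[0,T)$, pick $y^\ast=y^\ast(x,\lambda,t)\in\partial\Theta^\lambda$ almost achieving the infimum defining $d:=d(\Phi^t(x),\partial\Phi^t(\Theta^\lambda))=|\Phi^t(x)-\Phi^t(y^\ast)|$.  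Differentiating $|\Phi^t(x)-\Phi^t(y^\ast)|$ via the ODE \eqref{1.6} and repeating the elementary inequalities in \eqref{2.4} yields
\[
\partial_t^+(d+\eta)^{-2\alpha}\;\le\;\frac{2\alpha\,\bigl|u(\theta^t;\Phi^t(x))-u(\theta^t;\Phi^t(y^\ast))\bigr|}{(d+\eta)^{1+2\alpha}}.
\]
Integrating in $\lambda$ and taking $\sup_x$ reduces \eqref{4.4} to bounding $|u(\theta^t;a)-u(\theta^t;b)|$ with $|a-b|=d$ by a quantity which, once weighted by $(d+\eta)^{-1-2\alpha}$ and integrated against $d|\mu|(\lambda)$, produces the claimed right-hand side.

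For this $L_\mu$-free velocity estimate we use Stokes' theorem on each $\Phi^t(\Theta^{\lambda'})$ (valid because $K\in L^1_{\mathrm{loc}}(\bbR^2)$ for $2\alpha<1$): writing $\tau^{t,\lambda'}$ for the unit tangent to $\Phi^t(\partial\Theta^{\lambda'})$,
\[
u(\theta^t;a)-u(\theta^t;b)\;=\;-\int_{\mathcal{L}}\int_{\Phi^t(\partial\Theta^{\lambda'})}\bigl(K(a-z)-K(b-z)\bigr)\tau^{t,\lambda'}(z)\,ds(z)\,d\mu(\lambda').
\]
The key geometric ingredient is the one-dimensional Sobolev embedding $H^2(\bbT)\hookrightarrow C^{1,1/2}(\bbT)$, which gives $\|z^{t,\lambda'}\|_{\dot C^{1,1/2}}^{2}\le C\|z^{t,\lambda'}\|_{\dot H^2}^{2}\le CM$; combined with Lemma~\ref{L3.5} at $\beta=\tfrac12$, this implies that each curve $\Phi^t(\partial\Theta^{\lambda'})$ is essentially straight below the scale $M^{-1}$.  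We split each such curve at $|a-z|=M^{-1}$: on the ``far'' piece $\{|a-z|\ge M^{-1}\}$ the mean-value bound $|K(a-z)-K(b-z)|\le C_\alpha|a-b|M^{1+2\alpha}$ yields a contribution $\le C_\alpha|a-b|M^{1+2\alpha}\ell(z^{t,\lambda'})$ per curve.  Summing against $d|\mu|(\lambda')$ and applying Lemma~\ref{L4.1} to bound $\int\ell(z^{t,\lambda'})d|\mu|(\lambda')\le C_\alpha S\bigl(\Lambda(\Theta)+T|\mu|(\mathcal L)(\|\theta^0\|_{L^1}+\|\theta^0\|_{L^\infty})\bigr)$, then dividing by $(d+\eta)^{1+2\alpha}$ and integrating against $d|\mu|(\lambda)$, produces exactly the right-hand side of \eqref{4.4}.

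The main obstacle is the ``near'' piece $\{|a-z|<M^{-1}\}$, where only the H\"older bound $|K(a-z)-K(b-z)|\le C_\alpha|a-b|^{1-2\alpha}$ is available and one loses a factor of $|a-b|^{2\alpha}$ compared to a clean Lipschitz estimate.  This loss is recovered by using that by flatness the near piece has total arclength at most $CM^{-1}$ and by expanding $\tau^{t,\lambda'}$ around the point of $\Phi^t(\partial\Theta^{\lambda'})$ closest to $a$, which reduces the resulting residual in $\partial_t^+L_\mu^\eta$ to a double integral essentially of the form $\int\int(d+\eta)^{-4\alpha}\cdot(\text{quantities controlled by }M)\,d|\mu|(\lambda')\,d|\mu|(\lambda)$.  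The hypothesis $\alpha\in(0,\tfrac16]$ enters exactly at this step: it ensures $4\alpha\le\tfrac23<1$, so this residual integral can be rearranged via Fubini and a Cauchy--Schwarz splitting of $(d+\eta)^{-4\alpha}=(d+\eta)^{-2\alpha}\cdot(d+\eta)^{-2\alpha}$ to be absorbed into $L_\mu^\eta(\Phi^t_*\Theta)$ multiplied by the $M^{1+2\alpha}$ factor.  Combining the far- and near-part contributions then yields \eqref{4.4}.
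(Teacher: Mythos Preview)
There is a genuine gap in the near-piece argument.  The heart of the paper's proof is precisely the step you gloss over: after choosing $y$ as the point of $z^{t_n,\lambda}$ closest to $\Phi^{t_n}(x)$, the vector $\Phi^{t_n}(x)-\Phi^{t_n}(y)$ is \emph{normal} to $z^{t_n,\lambda}$ at $\Phi^{t_n}(y)$, and what must be estimated is not the full velocity difference but only its component in this normal direction (this is all that survives in the derivative of the distance).  The paper then invokes \cite[Lemma~2.7]{JeoZla2}, a geometric lemma that uses this normality together with the uniform $C^{1,1/2}$ regularity of the curves to produce a genuine \emph{Lipschitz} bound
\[
    \left|\bigl(u(\theta^{t_n};\Phi^{t_n}(y))-u(\theta^{t_n};\Phi^{t_n}(x))\bigr)\cdot\frac{\Phi^{t_n}(x)-\Phi^{t_n}(y)}{|\Phi^{t_n}(x)-\Phi^{t_n}(y)|}\right|
    \le C_\alpha M^{1+2\alpha}|\Phi^{t_n}(x)-\Phi^{t_n}(y)|\int_{\mathcal L}\ell(z^{t_n,\lambda'})\,d|\mu|(\lambda'),
\]
with no H\"older loss at all; the restriction $\alpha\le\frac16$ enters through that lemma, not through any $(d+\eta)^{-4\alpha}$ integral.

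Your approach instead bounds the full $|u(\theta^t;a)-u(\theta^t;b)|$, accepts the H\"older loss $|a-b|^{1-2\alpha}$ on the near piece, and then proposes to recover it by a ``Cauchy--Schwarz splitting of $(d+\eta)^{-4\alpha}=(d+\eta)^{-2\alpha}\cdot(d+\eta)^{-2\alpha}$''.  But this splitting yields nothing: Cauchy--Schwarz applied to $\int_{\mathcal L}(d_\lambda+\eta)^{-2\alpha}(d_\lambda+\eta)^{-2\alpha}\,d|\mu|(\lambda)$ just returns the same integral, and there is no $\eta$-independent way to bound $\int_{\mathcal L}(d_\lambda+\eta)^{-4\alpha}\,d|\mu|(\lambda)$ by $L_\mu^\eta(\Phi_*^t\Theta)$ times quantities already under control---the integration is over the label space $\mathcal L$, so the threshold $4\alpha<1$ carries no spatial meaning here.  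Without exploiting the normal projection (and the cancellation it buys via \cite[Lemma~2.7]{JeoZla2}), the near-piece term cannot be closed, and the argument is circular in exactly the way you set out to avoid.
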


\begin{proof}
    Fix $\eta>0$ and $t\in[0,T)$, and find a decreasing sequence $t_{n}\to t$ such that
    \beq\lb{11.101}
        \partial_{t}^{+}L_{\mu}^{\eta}(\Phi_{*}^{t}\Theta)
        = \lim_{n\to\infty}\frac{L_{\mu}^{\eta}(\Phi_{*}^{t_{n}}\Theta)
        - L_{\mu}^{\eta}(\Phi_{*}^{t}\Theta)}{t_{n} - t}.
    \eeq
    Without loss assume that $t_{1}\leq t + \frac{1}{2C_{\alpha}L_{\mu}(\Phi_{*}^{t}\Theta)}$
    with $C_{\alpha}$ from Lemma~\ref{L2.1}, so that
    \begin{equation}\label{4.5}
        L_{\mu}(\Phi_{*}^{\tau}\theta)\leq 2L_{\mu}(\Phi_{*}^{t}\Theta)
    \end{equation}
    holds for all $\tau\in [t,t_{1}]$ by Proposition~\ref{P2.8}.
    Fix any $n\in\bbN$ and pick $x\in\bbR^{2}$ such that
    \beq\lb{11.100}
        L_{\mu}^{\eta}(\Phi_{*}^{t_{n}}\Theta)
        \leq \int_{\mathcal{L}}\frac{d|\mu|(\lambda)}
        {\left(
            d(\Phi^{t_{n}}(x), \operatorname{im}(z^{t_{n},\lambda})) + \eta
        \right)^{2\alpha}}
        + \frac{t_{n} - t}{n}.
    \eeq
    Then fix any $\lambda\in\mathcal{L}$ and pick
    $y\in\operatorname{im}(z^{0,\lambda})$ such that
    \[
        d(\Phi^{t_{n}}(x), \operatorname{im}(z^{t_{n},\lambda}))
        = \abs{\Phi^{t_{n}}(x) - \Phi^{t_{n}}(y)}.
    \]

    Since $L_{\mu}^{\eta}(\Phi_{*}^{t_{n}}\Theta)
    \leq L_{\mu}(\Phi_{*}^{t_{n}}\Theta) <\infty$,
    for $|\mu|$-almost every $\lambda$ we have
    $\abs{\Phi^{t_{n}}(x) - \Phi^{t_{n}}(y)} > 0$ and so
 $x\neq y$. For any such $\lambda$,
    an argument similar to \eqref{2.4} yields the first inequality in
    \begin{align*}
        &\frac{1}{\left(
            d(\Phi^{t_{n}}(x), \operatorname{im}(z^{t_{n},\lambda})) + \eta
        \right)^{2\alpha}}
        - \frac{1}{\left(
            d(\Phi^{t}(x), \operatorname{im}(z^{t,\lambda})) + \eta
        \right)^{2\alpha}}
        \\&\qquad\quad\leq
        \frac{\abs{\Phi^{t}(x) - \Phi^{t}(y)}
        - \abs{\Phi^{t_{n}}(x) - \Phi^{t_{n}}(y)}}
        {\left(
            \abs{\Phi^{t_{n}}(x) - \Phi^{t_{n}}(y)} + \eta
        \right)
        \left(
            \abs{\Phi^{t}(x) - \Phi^{t}(y)} + \eta
        \right)^{2\alpha}}
        \\&\qquad\quad\leq
        \frac{\Phi^{t}(x) - \Phi^{t}(y)}
        {\abs{\Phi^{t}(x) - \Phi^{t}(y)}}\cdot
        \frac{(\Phi^{t}(x) - \Phi^{t_{n}}(x))
        - (\Phi^{t}(y) - \Phi^{t_{n}}(y))}
        {\left(
            \abs{\Phi^{t_{n}}(x) - \Phi^{t_{n}}(y)} + \eta
        \right)
        \left(
            \abs{\Phi^{t}(x) - \Phi^{t}(y)} + \eta
        \right)^{2\alpha}}.
    \end{align*}
    We now write the right-hand side as the sum of the following terms
    which we estimate separately:
    \begin{align*}
        P_{1}^n &\coloneqq \frac{\Phi^{t_{n}}(x) - \Phi^{t_{n}}(y)}
        {\abs{\Phi^{t_{n}}(x) - \Phi^{t_{n}}(y)}}\cdot
        \frac{(t_{n} - t)\left(
            u(\theta^{t_{n}};\Phi^{t_{n}}(y))
            - u(\theta^{t_{n}};\Phi^{t_{n}}(x))
        \right)}
        {\left(
            \abs{\Phi^{t_{n}}(x) - \Phi^{t_{n}}(y)} + \eta
        \right)
        \left(
            \abs{\Phi^{t}(x) - \Phi^{t}(y)} + \eta
        \right)^{2\alpha}}, \\
        P_{2}^n &\coloneqq \left[
            \frac{\Phi^{t}(x) - \Phi^{t}(y)}
            {\abs{\Phi^{t}(x) - \Phi^{t}(y)}}
            - \frac{\Phi^{t_{n}}(x) - \Phi^{t_{n}}(y)}
            {\abs{\Phi^{t_{n}}(x) - \Phi^{t_{n}}(y)}}
        \right] \cdot
         \frac{(t_{n} - t)\left(
            u(\theta^{t_{n}};\Phi^{t_{n}}(y))
            - u(\theta^{t_{n}};\Phi^{t_{n}}(x))
        \right)}
        {\left(
            \abs{\Phi^{t_{n}}(x) - \Phi^{t_{n}}(y)} + \eta
        \right)
        \left(
            \abs{\Phi^{t}(x) - \Phi^{t}(y)} + \eta
        \right)^{2\alpha}}, \\
        P_{3}^n &\coloneqq
        \frac{\Phi^{t}(x) - \Phi^{t}(y)}
        {\abs{\Phi^{t}(x) - \Phi^{t}(y)}} \cdot
        \frac{
            \int_{t}^{t_{n}}
            \left[ u(\theta^{\tau};\Phi^{\tau}(y))
            - u(\theta^{t_{n}};\Phi^{t_{n}}(y))
            - u(\theta^{\tau};\Phi^{\tau}(x))
            + u(\theta^{t_{n}};\Phi^{t_{n}}(x))
            \right] d\tau
        }{\left(
            \abs{\Phi^{t_{n}}(x) - \Phi^{t_{n}}(y)} + \eta
        \right)
        \left(
            \abs{\Phi^{t}(x) - \Phi^{t}(y)} + \eta
        \right)^{2\alpha}}.
    \end{align*}

    \textbf{Estimate for $P_{1}^n$.} Since $|\mu|(\mathcal L)<\infty$ and
    $\nabla^{\perp}K(\Phi^{t_{n}}(y) - \,\cdot\,)
    - \nabla^{\perp}K(\Phi^{t_{n}}(x) - \,\cdot\,)$ is integrable,
    Fubini's theorem and Green's theorem show that
    \begin{align*}
        &u(\theta^{t_{n}};\Phi^{t_{n}}(y))
        - u(\theta^{t_{n}};\Phi^{t_{n}}(x)) \\
        &\quad\ = \int_{\bbR^{2}}\left(
            \nabla^{\perp}K(\Phi^{t_{n}}(y) - y')
            - \nabla^{\perp}K(\Phi^{t_{n}}(x) - y')
        \right)
        \theta^{t_{n}}(y')\,dy' \\
        &\quad\ = \int_{\mathcal{L}}\int_{\Phi^{t_{n}}(\Theta^{\lambda'})}\left(
            \nabla^{\perp}K(\Phi^{t_{n}}(y) - y')
            - \nabla^{\perp}K(\Phi^{t_{n}}(x) - y')
        \right)dy'\,d\mu(\lambda') \\
        &\quad\ = \int_{\mathcal{L}}\int_{\ell(z^{t_{n},\lambda'})\bbT}\left(
            K(\Phi^{t_{n}}(y) - z^{t_{n},\lambda'}(s))
            - K(\Phi^{t_{n}}(x) - z^{t_{n},\lambda'}(s))
        \right)\partial_{s}z^{t_{n},\lambda'}(s)\,ds\,d\mu(\lambda')
    \end{align*}
    where $z^{t_{n},\lambda'}(\cdot)$ is an arbitrary arclength parametrization of
    $z^{t_{n},\lambda'}$.
    Moreover, the definition of $y$ shows that $\Phi^{t_{n}}(x) - \Phi^{t_{n}}(y)$
    is normal to $z^{t_{n},\lambda}$ at $\Phi^{t_{n}}(y)$, and  from $R_{\mu}(\Phi_{*}^{t_{n}}\Theta)<\infty$
    we know that $\operatorname{im}(z^{t_{n},\lambda})
    \cap \operatorname{im}(z^{t_{n},\lambda'}) = \emptyset$ for
    $\abs{\mu}$-almost every $\lambda'$. Then for each such $\lambda'$,
    \cite[Lemma 2.7]{JeoZla2} (with $\beta\coloneqq \frac 12$ and
    $(\gamma_{1},\gamma_{2})\coloneqq (z^{t_{n},\lambda}, z^{t_{n},\lambda'})$)
    and Lemma~\ref{L4.1} show that
    \begin{align*}
        &\abs{\left(
            u(\theta^{t_{n}};\Phi^{t_{n}}(y))
            - u(\theta^{t_{n}};\Phi^{t_{n}}(x))
        \right)\cdot \frac{\Phi^{t_{n}}(x) - \Phi^{t_{n}}(y)}
        {\abs{\Phi^{t_{n}}(x) - \Phi^{t_{n}}(y)}}} \\
        &\qquad\quad \leq
        C_{\alpha}M^{1+2\alpha}
        \abs{\Phi^{t_{n}}(y) - \Phi^{t_{n}}(x)}
        \int_{\mathcal{L}}\ell(z^{t_{n},\lambda'})\,d|\mu|(\lambda') \\
        &\qquad\quad \leq
        C_{\alpha}SM^{1+2\alpha}\left(
            \Lambda(\Theta) + T|\mu|(\mathcal L) \left(   \norm{\theta^{0}}_{L^{1}} +  \norm{\theta^{0}}_{L^{\infty}} \right)
        \right)
        \abs{\Phi^{t_{n}}(y) - \Phi^{t_{n}}(x)}
    \end{align*}
    for some constant $C_\alpha$.  This then yields
    \begin{equation}\label{4.6}
        \abs{P_{1}^n} \leq \frac{ C_{\alpha}SM^{1+2\alpha}\left(
            \Lambda(\Theta) + T|\mu|(\mathcal L) \left(   \norm{\theta^{0}}_{L^{1}} +  \norm{\theta^{0}}_{L^{\infty}} \right)
        \right)   }
        {\left(            d(\Phi^{t}(x), \operatorname{im}(z^{t,\lambda})) + \eta
        \right)^{2\alpha}}  (t_{n} - t) .
    \end{equation}

    \textbf{Estimate for $P_{2}^n$.} By the same argument as in \eqref{3.6} we have
    \[
        \abs{\Phi^{\tau}(x) - \Phi^{\tau}(y)
        - \Phi^{t_{n}}(x) + \Phi^{t_{n}}(y)}
        \leq \left(
            e^{2C_{\alpha}L_{\mu}(\Phi_{*}^{t}\Theta)(t_{n}-\tau)} - 1
        \right)
        \abs{\Phi^{t_{n}}(x) - \Phi^{t_{n}}(y)}
    \]
    for any $\tau\in[t,t_{n}]$. Therefore
    \begin{align*}
        &\abs{\frac{\Phi^{t}(x) - \Phi^{t}(y)}
            {\abs{\Phi^{t}(x) - \Phi^{t}(y)}}
            - \frac{\Phi^{t_{n}}(x) - \Phi^{t_{n}}(y)}
            {\abs{\Phi^{t_{n}}(x) - \Phi^{t_{n}}(y)}}
        } \\
        &\qquad\leq
                \frac{\abs{\Phi^{t}(x) - \Phi^{t}(y)
        - \Phi^{t_{n}}(x) + \Phi^{t_{n}}(y)}}
        {\abs{\Phi^{t_{n}}(x) - \Phi^{t_{n}}(y)}} +
        \frac{\abs{\abs{\Phi^{t_{n}}(x) - \Phi^{t_{n}}(y)}
        - \abs{\Phi^{t}(x) - \Phi^{t}(y)}}}
        {        \abs{\Phi^{t_{n}}(x) - \Phi^{t_{n}}(y)}}
\\
        &\qquad\leq
        2\left(
            e^{2C_{\alpha}L_{\mu}(\Phi_{*}^{t}\Theta)(t_{n}-\tau)} - 1
        \right),
    \end{align*}
    which together with Lemma~\ref{L2.1} and \eqref{4.5} yields
    \begin{equation}\label{4.7}
        \abs{P_{2}^n} 
        \leq \frac{C_{\alpha}\left(
            e^{2C_{\alpha}L_{\mu}(\Phi_{*}^{t}\Theta)(t_{n}-t)} - 1
        \right)  L_{\mu}(\Phi_{*}^{t}\Theta) }
        {\left(
            d(\Phi^{t}(x), \operatorname{im}(z^{t,\lambda})) + \eta
        \right)^{2\alpha}} (t_{n} - t) .
\end{equation}

    \textbf{Estimate for $P_{3}^n$.} By Lemma~\ref{L2.1}, Lemma~\ref{L2.2}, and \eqref{4.5} we have
    \begin{align*}
        &\int_{t}^{t_{n}}
        \abs{
            u(\theta^{\tau};\Phi^{\tau}(y))
            - u(\theta^{t_{n}};\Phi^{t_{n}}(y))
            - u(\theta^{\tau};\Phi^{\tau}(x))
            + u(\theta^{t_{n}};\Phi^{t_{n}}(x))
        }d\tau \\
        &\qquad\qquad\leq
        2\int_{t}^{t_{n}} \left(
         \norm{u(\Phi_{*}^{\tau}\Theta) - u(\Phi_{*}^{t_{n}}\Theta)}_{L^{\infty}}  +
        \norm{u(\theta^{\tau})}_{\dot{C}^{0,1}}
        \norm{\Phi^{\tau} - \Phi^{t_{n}}}_{L^{\infty}} \right)
        d\tau \\
        &\qquad\qquad\leq
        C_{\alpha}L_{\mu}(\Phi_{*}^{t}\Theta)
        \int_{t}^{t_{n}}\norm{\Phi^{\tau} - \Phi^{t_{n}}}_{L^{\infty}}\,d\tau,
    \end{align*}
    so
    \begin{equation}\label{4.8}
        \abs{P_{3}^n} \leq \frac{C_{\alpha}L_{\mu}(\Phi_{*}^{t}\Theta)
        \int_{t}^{t_{n}}\norm{\Phi^{\tau} - \Phi^{t_{n}}}_{L^{\infty}}\,d\tau}
        {\eta\left(
            d(\Phi^{t}(x),\operatorname{im}(z^{t,\lambda})) + \eta
        \right)^{2\alpha}}.
    \end{equation}
    \smallskip

Estimates \eqref{4.7} and \eqref{4.8} show that $\lim_{n\to\infty} \frac{|P_j^n|}{t_n-t}=0$ uniformly in $\lambda\in\mathcal L$ for $j=2,3$, and then \eqref{11.101}, \eqref{11.100}, and  \eqref{4.6} yield \eqref{4.4}.
\end{proof}

Lemma~\ref{L4.2} and a Gr\"{o}nwall-type argument now show
\[
    L_{\mu}^{\eta}(\Phi_{*}^{t}\Theta)
    \leq e^{M_{1}T}L_{\mu}^{\eta}(\Theta)
    \leq e^{M_{1}T}L_{\mu}(\Theta)
\]
for any $t\in[0,T)$, where
\[
    M_{1} \coloneqq C_{\alpha}SM^{1+2\alpha}\left(
            \Lambda(\Theta) + T|\mu|(\mathcal L) \left(   \norm{\theta^{0}}_{L^{1}} +  \norm{\theta^{0}}_{L^{\infty}} \right)
        \right)
\]
and $C_{\alpha}$ is from Lemma~\ref{L4.2}. Letting $\eta\to 0$
then yields
\[
    \limsup_{t\to T}L_{\mu}(\Phi_{*}^{t}\Theta)
    \leq e^{M_{1}T}L_{\mu}(\Theta) < \infty,
\]
 which is the desired  contradiction and finishes the proof of Theorem~\ref{T1.4}(iii).

\end{document}